\newtheorem{theorem}{Theorem}[section]
\newtheorem*{theorem*}{Theorem}
\newtheorem{corollary}[theorem]{Corollary}
\newtheorem*{corollary*}{Corollary}
\newtheorem{lemma}[theorem]{Lemma}
\newtheorem*{lemma*}{Lemma}
\newtheorem{proposition}[theorem]{Proposition}
\newtheorem*{question*}{Question}
\newtheorem{conjecture}[theorem]{Conjecture}
\newtheorem{cit}[theorem]{Citation}
\theoremstyle{definition}
\newtheorem{definition}[theorem]{Definition}
\newtheorem*{definition*}{Definition}
\newtheorem{example}[theorem]{Example}
\newtheorem{notation}[theorem]{Notation}
\theoremstyle{remark}
\newtheorem{remark}[theorem]{Remark}
\newcommand{\Field}{\mathbb{F}}
\newcommand{\N}{\mathbb{N}}
\newcommand{\R}{\mathbb{R}}
\newcommand{\Z}{\mathbb{Z}}
\DeclareMathOperator{\alk}{\lk^\uparrow\!}
\DeclareMathOperator{\asst}{\st^\uparrow\!}
\DeclareMathOperator{\Aut}{Aut}
\DeclareMathOperator{\Ch}{Ch}
\DeclareMathOperator{\conv}{conv}
\DeclareMathOperator{\diam}{diam}
\DeclareMathOperator{\dlk}{\lk^\downarrow\!}
\DeclareMathOperator{\dst}{\st^\downarrow\!}
\DeclareMathOperator{\F}{F}
\DeclareMathOperator{\FP}{FP}
\DeclareMathOperator{\lk}{lk}
\DeclareMathOperator{\op}{op}
\DeclareMathOperator{\Opp}{Opp}
\DeclareMathOperator{\pr}{pr}
\DeclareMathOperator{\pt}{pt}
\DeclareMathOperator{\st}{st}
\DeclareMathOperator{\Sym}{Sym}
\DeclareMathOperator{\thickness}{th}
\newcommand{\abs}[1]{\left\lvert #1 \right\rvert}
\newcommand{\ceil}[1]{\lceil #1 \rceil}
\newcommand{\floor}[1]{\left\lfloor #1 \right\rfloor}
\newcommand\Set[2]{\{\,#1\mid#2\,\}}
\newcommand{\defeq}{\mathrel{\mathop{:}}=}
\newcommand{\eqdef}{=\mathrel{\mathop{:}}}
\begin{document}

\title[Random subcomplexes of buildings, and commutators of RACGs]{Random subcomplexes of finite buildings,\\ and fibering of commutator subgroups of right-angled Coxeter groups}
\date{\today}
\subjclass[2010]{Primary 20F65;   
                 Secondary 57M07} 

\keywords{Right-angled Coxeter group, building, fibering, finiteness properties}

\author[E.~Schesler]{Eduard Schesler}
\address{Fakult\"at f\"ur Mathematik und Informatik, FernUniversit\"at in Hagen, 58084 Hagen, Germany}
\email{eduard.schesler@fernuni-hagen.de}

\author[M.~C.~B.~Zaremsky]{Matthew C.~B.~Zaremsky}
\address{Department of Mathematics and Statistics, University at Albany (SUNY), Albany, NY 12222}
\email{mzaremsky@albany.edu}

\begin{abstract}
The main theme of this paper is higher virtual algebraic fibering properties of right-angled Coxeter groups (RACGs), with a special focus on those whose defining flag complex is a finite building.
We prove for particular classes of finite buildings that their random induced subcomplexes have a number of strong properties, most prominently that they are highly connected.
From this we are able to deduce that the commutator subgroup of a RACG, with defining flag complex a finite building of a certain type, admits an epimorphism to $\Z$ whose kernel has strong topological finiteness properties.
We additionally use our techniques to present examples where the kernel is of type $\F_2$ but not $\FP_3$, and examples where the RACG is hyperbolic and the kernel is finitely generated and non-hyperbolic.
The key tool we use is a generalization of an approach due to Jankiewicz--Norin--Wise involving Bestvina--Brady discrete Morse theory applied to the Davis complex of a RACG, together with some probabilistic arguments.
\end{abstract}

\maketitle
\thispagestyle{empty}

\section*{Introduction}

A group $G$ is said to \emph{algebraically fiber} if it admits an epimorphism $G\to\Z$ whose kernel is finitely generated.
If some finite index subgroup of $G$ algebraically fibers, then we say $G$ \emph{virtually algebraically fibers}.
Let us say that a group $G$ \emph{algebraically $F_n$-fibers} if it admits an epimorphism $G\to\Z$ whose kernel is of type $\F_n$.
Recall that a group is of \emph{type $F_n$} if it admits a classifying space with finite $n$-skeleton, so for example type $\F_1$ is equivalent to finite generation and type $\F_2$ is equivalent to finite presentability.
In particular, algebraic $\F_1$-fibering means algebraic fibering.
Note that every group is of type $\F_0$, so a group algebraically $\F_0$-fibers if and only if it admits an epimorphism to $\Z$.
We will generically refer to algebraic $\F_n$-fibering as \emph{higher algebraic fibering}.
Also define \emph{virtually algebraically $F_n$-fibers} and \emph{virtual higher algebraic fibering} in the obvious way.
Recall that ``$\F_\infty$'' is shorthand for ``$\F_n$ for all $n$''.
Finally, recall the homological finiteness properties $\FP_n$; a group is of \emph{type $FP_n$} if the $\Z G$-module $\Z$ admits a projective resolution that is finitely generated in dimensions up to $n$.
We have natural homological analogs to all the above, e.g., \emph{algebraically $FP_n$-fibers}.
It is a standard fact that for $n\ge 2$, $\F_n$ is equivalent to $\F_2$ plus $\FP_n$ (see \cite[Section~VIII.7]{Brown94cohogrps}).

A class of groups for which the question of virtual higher algebraic fibering is a very rich one is the class of right-angled Coxeter groups (RACGs).
\begin{definition*}
A \emph{flag complex} is a simplicial complex in which any finite collection of vertices that pairwise span edges spans a simplex.
Given a finite flag complex $L$, the \emph{right-angled Coxeter group (RACG)} associated to $L$ is the group
\[
W_L\defeq \langle L^{(0)}\mid v^2=1 \text{ for all } v\in L^{(0)} \text{ and } vw=wv \text{ for all } \{v,w\}\in L^{(1)}\rangle \text{.}
\]
\end{definition*}
If we remove the relations $v^2=1$ for all $v$, then we get the \emph{right-angled Artin group (RAAG)} $A_L$ associated to $L$.
For the question of which RAAGs virtually algebraically $\F_n$-fiber, there is a very straightforward sufficient condition: if $L$ is $(n-1)$-connected, then $A_L$ algebraically $\F_n$-fibers \cite{BestvinaBrady97}.
For RACGs, things are much more open.

\begin{question*}
For which flag complexes $L$ does $W_L$ virtually algebraically $\F_n$- and/or $\FP_n$-fiber?
\end{question*}

In this paper, we approach the question of virtual higher algebraic fibering of RACGs from a particular point of view.
Our general approach is heavily influenced by work of Jankiewicz--Norin--Wise \cite{JankiewiczNorinWise21}, who used a combinatorial ``game'' to produce a sufficient condition on $L$ for a RACG $W_L$ to virtually algebraically fiber.
For brevity let us generally refer to this as the \emph{JNW Game}.
Details will be discussed in Subsection~\ref{sec:JNW}.
One nice feature of this approach is that, more than just showing that $W_L$ virtually algebraically fibers, it shows that the commutator subgroup $W_L'$ (which has finite index) algebraically fibers.
In this way the ``virtually'' part of the property becomes constructive, i.e., we get a concrete finite index subgroup.
We remark that, in practice, subgroups of $W_L$ containing the commutator subgroup with smaller index in $W_L$ might also ``work'', so focusing on $W_L'$ is not necessarily optimal, but it is convenient to have a standard description of a finite index subgroup of $W_L$ that works, and it is also interesting that we never have to pass to deeper finite index subgroups than $W_L'$.

We consider the obvious generalization of the JNW Game to higher algebraic fibering, and hence give a sufficient condition for $W_L'$ to algebraically $\F_n$-fiber.
We then present a way to force this sufficient condition to hold, by ensuring that $L$ has ``enough'' highly connected induced subcomplexes, in the following lemma (and see Lemma~\ref{lem:force_legal_system} for a more precise statement).

\begin{lemma*}[Lemma~\ref{lem:force_legal_system}]
If the proportion of induced subcomplexes of $L$ that are not $(m-1)$-connected is less than $(1/2)^{\chi(L)+1}$, where $\chi(L)$ is the chromatic number of $L$, then $W_L$ virtually algebraically $\F_m$-fibers.
\end{lemma*}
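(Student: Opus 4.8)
The plan is to deduce this from the (generalized) JNW game's sufficient condition by a first–moment argument. Recall from the discussion of the JNW game, and its generalization to higher fibering, that in order to conclude that $W_L'$ algebraically $\F_m$-fibers — and hence that $W_L$ virtually algebraically $\F_m$-fibers, since $W_L'$ has finite index in $W_L$ — it suffices to exhibit a \emph{legal $m$-system of moves} for $L$: one fixes a proper coloring of $V(L)$ with $k:=\chi(L)$ colors, so that $V(L)$ is partitioned into independent sets $C_1,\dots,C_k$; a \emph{system of moves} is a choice of a move for each color class; and such a system is \emph{legal for $\F_m$} precisely when every one of the finitely many ascending and descending links of the associated Bestvina–Brady Morse function on the Davis complex $\Sigma_L$ of $W_L$ is $(m-1)$-connected. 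The combinatorial input that makes the game usable here — and which belongs to the setup of the generalized game rather than to this lemma — is that each such link is combinatorially an induced subcomplex of $L$, that there are at most $2^{k+1}$ of them to inspect, and that as the system of moves ranges uniformly over all admissible systems, each individual one of these links is equidistributed among all induced subcomplexes of $L$.

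Granting this, the proof is short. Choose a system of moves uniformly at random (equivalently, choose the move for each color class independently and uniformly; concretely this amounts to choosing a uniformly random subset of $V(L)$). For a fixed one of the at most $2^{k+1}$ relevant ascending or descending links, the probability that it fails to be $(m-1)$-connected equals the proportion $p$ of induced subcomplexes of $L$ that are not $(m-1)$-connected, because that link is equidistributed among all induced subcomplexes. By the union bound,
\[
\Pr\big[\text{some relevant link is not }(m-1)\text{-connected}\big]\ \le\ 2^{k+1}\,p\ <\ 2^{k+1}\left(\tfrac12\right)^{k+1}\ =\ 1\text{,}
\]
where the strict inequality is the hypothesis $p<(1/2)^{\chi(L)+1}=(1/2)^{k+1}$. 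Hence with positive probability every relevant link is $(m-1)$-connected, so a legal $m$-system of moves for $L$ exists; by the sufficient condition recalled above, $W_L'$ then algebraically $\F_m$-fibers, and therefore $W_L$ virtually algebraically $\F_m$-fibers.

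The real content, and the step that must be arranged with care, is the combinatorial dictionary quoted in the first paragraph: that running the generalized JNW game relative to a proper $\chi(L)$-coloring reduces algebraic $\F_m$-fibering of $W_L'$ to the $(m-1)$-connectivity of a list of at most $2^{\chi(L)+1}$ induced subcomplexes of $L$, and that a uniformly random legal system turns each entry of this list into a uniformly random induced subcomplex of $L$. This is precisely where the chromatic number enters — it controls the length of the list — and it is the part whose verification requires the explicit analysis of the ascending and descending links of the Morse function on $\Sigma_L$; once it is in place, everything else is the probabilistic bookkeeping above. (If one keeps track of exactly which induced subcomplexes occur on the list, one can weaken the hypothesis accordingly, but the global proportion bound stated here is what the union bound yields directly.)
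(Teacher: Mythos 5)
Your argument is correct and is essentially the paper's proof, recast probabilistically: the paper fixes the colored system of moves whose moves are exactly the color classes (so $\mathcal{M}$ has order $2^{\chi(L)}$), and then applies the pigeonhole principle to find a coset of $\mathcal{M}$ in $\mathcal{A}=(\Z/2\Z)^{|L^{(0)}|}$ consisting entirely of $(m-1)$-legal states, whereas you choose a uniformly random state $\sigma$ and apply a union bound over the at most $2^{\chi(L)+1}$ induced subcomplexes occurring as links of vertices of the coset $\sigma+\mathcal{M}$ (or their complements), which is the same counting argument. The only quibble is terminology: what you are randomizing is the initial state $\sigma$ (equivalently the coset $\sigma+\mathcal{M}$), not the system of moves — the colored system of moves is fixed by the choice of coloring, and it is the coset that the pigeonhole/union bound ranges over; with that adjustment of language the argument matches the paper exactly.
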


The homological version, using $(m-1)$-acyclic and type $\FP_m$, also holds.
We also provide a sufficient condition for the type $\F_m$ (or $\FP_m$) kernel that arises to be not of type $\FP_{m+1}$, see Lemma~\ref{lem:force_sharply_legal_system}.

This approach to virtual higher fibering of RACGs can be seen as an analog to the (non-virtual) situation for RAAGs, initiated by Bestvina--Brady \cite{BestvinaBrady97} and finalized by Meier--Meinert-VanWyk \cite{MeierMeinertVanWyk98} and Bux--Gonzalez \cite{BuxGonzalez99}.
The JNW Game relies on Bestvina--Brady Morse theory, applied to a natural cube complex associated to $W_L$, which reduces the problem to understanding higher connectivity properties of ascending and descending links of vertices.
In the analogous situation for RAAGs in \cite{BestvinaBrady97}, all the vertices have isomorphic ascending links and isomorphic descending links, essentially thanks to the generators having infinite order, and this makes things much more manageable.
For RACGs the ascending and descending links of vertices are constantly changing, essentially thanks to the generators having finite order, and this makes things quite difficult.

After setting up this generalization of the JNW Game, our focus shifts to an interesting class of new examples.
We consider the situation where $L$ is a sufficiently thick finite building, and by analyzing ``random'' induced subcomplexes of certain finite buildings we are able to show using Lemma~\ref{lem:force_legal_system} that certain such $L$ indeed have ``enough'' highly connected induced subcomplexes.

These results on random induced subcomplexes of finite buildings are interesting in their own right.
Buildings are certain simplicial complexes that have an especially high degree of symmetry and connectivity.
They appear in a variety of contexts, e.g., projective geometry, Lie theory, and geometric group theory.
Intuitively, since buildings have such a high degree of symmetry, it stands to reason that a random induced subcomplex of a building ought to retain at least some of the strong connectivity properties of the building.
Our main result in this vein is the following, and see Theorem~\ref{thm:good-homology} for a more precise statement.

\begin{theorem*}[Theorem~\ref{thm:good-homology}]
Let $k \in \N_{\geq 2}$.
Let $\Delta_{k,n}$ be a family of finite buildings of type $A_k$, with thickness going to $\infty$ with $n$.
Then the proportion of induced subcomplexes of $\Delta_{k,n}$ that fail to be $(\floor{\frac{k-1}{2}})$-connected decreases exponentially with $n$.
\end{theorem*}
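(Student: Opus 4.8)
The plan is to realize the buildings $\Delta_{k,n}$ concretely and to run Bestvina--Brady discrete Morse theory with the ``dimension'' colouring. A finite building of type $A_k$ is the flag complex $\Delta(V)$ of proper nonzero linear subspaces of a $(k+1)$-dimensional vector space $V$ over a finite field $\Field_q$, and its thickness is $q+1$; so we may take $V=V_n$ over $\Field_q$ with $q=q_n\to\infty$, and since $k$ is fixed it is enough to bound by $\exp(-\Omega(q))$ (with implied constants depending only on $k$) the probability that the induced subcomplex $\Delta(V)[S]$ on a uniformly random vertex subset $S$ fails to be $\floor{\frac{k-1}{2}}$-connected. Colour each vertex of $\Delta(V)$ by the dimension of the corresponding subspace; this gives a map to $\{1,\dots,k\}$ which is injective on the vertex set of every simplex, hence a Morse function on $\Delta(V)[S]$. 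For a subspace $U$ of dimension $j$, the link of the vertex $U$ is the join of the flag complex of subspaces of $U$ with the flag complex of subspaces containing $U$, and the colours occurring in these two factors are exactly $\{1,\dots,j-1\}$ and $\{j+1,\dots,k\}$; hence the descending link $\dlk(U)$ (with respect to the colour function) inside $\Delta(V)[S]$ is a random induced subcomplex of the building $\Delta(U)$ of type $A_{j-1}$, with vertices retained independently with probability $\frac12$. Writing $X_j$ for the subcomplex of $\Delta(V)[S]$ spanned by the retained vertices of colour at most $j$, we get a filtration $X_1\subseteq\dots\subseteq X_k=\Delta(V)[S]$ in which $X_j$ is obtained from $X_{j-1}$ by coning off, over all retained colour-$j$ vertices $U$, the subcomplexes $\dlk(U)\subseteq X_{j-1}$ (the cones attached along their bases, which is legitimate because equally coloured vertices are never adjacent, so the cones meet one another only inside $X_{j-1}$).

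The argument runs by induction on $k$, the statement being that a random induced subcomplex of a building of type $A_k$ fails to be $\floor{\frac{k-1}{2}}$-connected with probability $\exp(-\Omega(q))$. The base cases are $A_1$ (the building is $q+1$ vertices and the statement is mere non-emptiness) and $A_2$ (the building is the incidence graph of a projective plane, and one checks directly that a random induced subgraph is connected off an event of probability $\exp(-\Omega(q))$: a retained point is cut off only if each of its $q+1$ retained incident lines carries no further retained point, and these events are independent across the lines through that point, with a similar estimate excluding larger separating sets). For the inductive step, note that $X_1$ is a nonempty discrete set whp and that coning off a subcomplex never lowers connectivity, so only a few steps do real work: the step through colour $2$ should make $X_2$ connected, the step through colour $3$ should make $X_3$ simply connected (this one using van Kampen's theorem), and for each $n\ge 2$ the step through colour $2n+1$ should kill $\widetilde{H}_n(X_{2n})$. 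At the step through colour $2n+1$ the descending links $\dlk(U)=\Delta(U)[S]$ are random induced subcomplexes of buildings of type $A_{2n}$, hence $(n-1)$-connected off an event of probability $\exp(-\Omega(q))$ by the inductive hypothesis, and there are at most $q^{O(k^2)}$ of them; a mapping-cone argument then shows that coning these off leaves homotopy and homology below degree $n$ unchanged, creates no new homology in degree $n$, and makes $X_{2n+1}$ be $n$-connected \emph{provided} the cones collectively kill $\widetilde{H}_n(X_{2n})$ --- equivalently, provided $\widetilde{H}_n(X_{2n})$ is generated by cycles each supported inside $\dlk(U)$ for some retained subspace $U$ of dimension $2n+1$ (and the analogous ``provided'' for $\pi_0$ at colour $2$ and for $\pi_1$ at colour $3$). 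Since for colours $j>2n+1$ the descending links are already $n$-connected (as $\floor{\frac{j-2}{2}}\ge n$) and so leave $\widetilde{H}_{\le n}$ and $\pi_1$ untouched, the $n$-connectivity achieved at colour $2n+1$ propagates to $\Delta(V)[S]=X_k$; running $n$ up to $\floor{\frac{k-1}{2}}$ (for which $2n+1\le k$) and applying the Hurewicz theorem closes the induction, modulo the ``fillability'' claim.

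This fillability claim --- that $\widetilde{H}_n(X_{2n})$, and in low degrees $\pi_0(X_1)$ and $\pi_1(X_2)$, is generated by classes carried inside dimension-$(2n+1)$ subspaces lying in $S$ --- is the heart of the matter, and it is here that one exploits the abundance of subspaces when $q$ is large. I would first produce a generating set of $\widetilde{H}_n(X_{2n})$ (and corresponding generating sets for $\pi_0(X_1)$ and $\pi_1(X_2)$) consisting of $q^{O(k^2)}$ ``elementary'' cycles built from bounded-size configurations of subspaces --- for $\pi_1$, say, the hexagonal loops spanned by a triangle of retained points together with the three retained lines joining them. Then, for a fixed elementary cycle $z$ whose underlying configuration lies in some dimension-$(2n+1)$ subspace $U_0$: if $U_0\in S$, the cone on $\dlk(U_0)$ (which contains $z$) fills $z$ at once; and if not, I would re-route $z$ through an auxiliary retained subspace --- of which there are $\Omega(q)$ essentially independent choices --- thereby writing $z$ as a combination of strictly smaller elementary cycles whose spanning subspaces are required to lie in $S$, and a Chernoff-type estimate over these $\Omega(q)$ (nearly) independent trials gives a per-cycle failure probability of $\exp(-\Omega(q))$. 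A union bound over the $q^{O(k^2)}$ elementary cycles, the $q^{O(k^2)}$ descending links, and the $k$ Morse steps then keeps the total failure probability at $\exp(-\Omega(q))$.

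I expect the main obstacle to be exactly this last step. The difficulties are combinatorial and probabilistic rather than topological: one must exhibit an explicit, bounded-size generating set of cycles for the truncated buildings $X_{2n}$ (whose homology is not handed to us for free, since these are not themselves buildings), and one must arrange the re-routing so that the required events stay independent enough for the Chernoff bound to survive the union bound. Everything is somewhat cleaner for the homological statement --- that a random induced subcomplex of $\Delta_{k,n}$ is $\floor{\frac{k-1}{2}}$-acyclic off an exponentially small event, giving the type-$\FP$ conclusion --- where one works with chains throughout and never meets $\pi_1$; the type-$\F$ statement then follows by combining acyclicity with the separate simple-connectivity argument made at colour $3$ and invoking the Hurewicz theorem.
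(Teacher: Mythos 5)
Your approach is genuinely different from the paper's, and it contains a gap that you yourself correctly identify as the crux but do not close. The paper does not use a Bestvina--Brady filtration by colour at all. Instead, after discarding (Lemma~\ref{lem:most-links-have-hyperplanes}) the exponentially small set of subcomplexes $X$ containing a vertex $U$ of dimension $<k$ not below any retained hyperplane, it covers $X$ by the relative stars $X_H = \st_X(H)$ over retained hyperplanes $H$, and applies the nerve lemma (Lemma~\ref{lem:nerve}). The decisive advantage of that covering is that each piece $X_H = \lk_X(H) \ast \{H\}$ is a \emph{cone}, hence contractible with no probabilistic input whatsoever; the nerve lemma thus reduces everything to (a) connectivity of the nerve, which is automatic once one knows that small enough intersections $X_{\mathcal{H}}$ are non-empty (so the relevant skeleton of the nerve is complete), and (b) connectivity of the $m$-fold intersections for $m\ge 2$, which by Lemma~\ref{lem:intersecting-stars} are random subcomplexes of lower-rank $A$-buildings and so are handled by the induction hypothesis. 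The base cases are $k=2$ (connectivity, from Theorem~\ref{thm:k-chamber-contractible}) and $k=3$ (simple connectivity, from Corollary~\ref{cor:trivial-fundamental-group}), both of which rest on the chamber-complex/apartment machinery of Section~\ref{sec:random-subcomplexes-are-chamber} rather than on an ad hoc argument about projective planes.

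The gap in your proposal is exactly the ``fillability'' claim. Coning off a descending link that is merely $(n-1)$-connected preserves $\pi_{<n}$ and gives only a \emph{surjection} on $\widetilde H_n$; it does not kill $\widetilde H_n(X_{2n})$ unless every $n$-cycle can actually be carried into one of the descending links. You are right that this is where all the work is, but your re-routing/Chernoff sketch is far from a proof: you would need to (i) exhibit a concrete, bounded-size generating set of cycles for the ``truncated'' complexes $X_{2n}$, which are not buildings and whose homology is not known a priori, (ii) control the dependence between the events indexing your re-routings well enough that the union bound over $q^{O(k^2)}$ elementary cycles still gives $\exp(-\Omega(q))$, and (iii) handle $\pi_0$ at colour $2$ and $\pi_1$ at colour $3$ by separate means. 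None of these are routine. The paper's nerve-of-stars device is precisely what sidesteps this difficulty: because the top-level covering pieces are cones, no filling argument is needed at all, and the induction only ever needs the ``connectivity'' direction of the hypothesis, never a ``generated by small cycles'' statement. If you want to persist with the Morse-theoretic route you would essentially have to rediscover this, e.g.\ by showing that every cycle in $X_{2n}$ retracts into a single hyperplane star --- at which point you have reinvented the covering argument.
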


For example when $k=2$ we get a family of finite simple graphs $\Delta_{2,n}$, with no induced squares, such that for large $n$, ``almost every'' induced subcomplex of $\Delta_{2,n}$ is connected.
We also conjecture (Conjecture~\ref{conj:vanishing-homology-general}) that ``$(\floor{\frac{k-1}{2}})$-connected'' can be improved to ``$(k-2)$-connected'' and the ``type $A_k$'' assumption can be dropped.

The main application to RACGs is that, thanks to Lemma~\ref{lem:force_legal_system}, this produces natural examples of RACGs that virtually algebraically $\F_{\floor{\frac{k+1}{2}}}$-fiber (and conjecturally $\F_{k-1}$-fiber), in Theorem~\ref{thrm:typeA_higher_fibering}:

\begin{theorem*}[Theorem~\ref{thrm:typeA_higher_fibering}]
Let $k\in\N_{\ge 2}$. For all but finitely many $n$, the group $W_{\Delta_{k,n}}'$ algebraically $\F_{\floor{\frac{k+1}{2}}}$-fibers.
\end{theorem*}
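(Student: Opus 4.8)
The plan is to combine the probabilistic input of Theorem~\ref{thm:good-homology} with the Bestvina--Brady/JNW machinery packaged in Lemma~\ref{lem:force_legal_system}. Set $m\defeq\floor{\frac{k+1}{2}}$ and note that $m-1=\floor{\frac{k-1}{2}}$: for $k=2j$ one has $\floor{\frac{k-1}{2}}=j-1$ and $\floor{\frac{k+1}{2}}=j$, while for $k=2j+1$ one has $\floor{\frac{k-1}{2}}=j$ and $\floor{\frac{k+1}{2}}=j+1$. So showing that $W_{\Delta_{k,n}}'$ algebraically $\F_m$-fibers is the same as producing, for all but finitely many $n$, an epimorphism $W_{\Delta_{k,n}}'\to\Z$ with kernel of type $\F_m$ where $m-1=\floor{\frac{k-1}{2}}$.

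First I would invoke Theorem~\ref{thm:good-homology}: since the $\Delta_{k,n}$ form a family of finite buildings of type $A_k$ with thickness tending to $\infty$ with $n$, the proportion $p_n$ of induced subcomplexes of $\Delta_{k,n}$ that fail to be $\bigl(\floor{\frac{k-1}{2}}\bigr)$-connected, i.e.\ that fail to be $(m-1)$-connected, decreases exponentially with $n$; in particular $p_n\to 0$. To feed this into (the precise form of) Lemma~\ref{lem:force_legal_system} I need the inequality $p_n<(1/2)^{\chi(\Delta_{k,n})+1}$ to hold for large $n$, where $\chi$ denotes chromatic number.

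The one genuine point to check is that $\chi(\Delta_{k,n})$ stays bounded as $n\to\infty$ — in fact it is constantly equal to $k$. This is because a building of type $A_k$ is, like every building, a colorable chamber complex: its vertex set carries a canonical type function valued in the $k$-element set indexing the Coxeter generators of type $A_k$, and two vertices joined by an edge necessarily have distinct types, so the type function is a proper $k$-coloring; conversely each chamber of $\Delta_{k,n}$ is a $(k-1)$-simplex, hence a clique on $k$ vertices, forcing $\chi(\Delta_{k,n})\ge k$. Thus $(1/2)^{\chi(\Delta_{k,n})+1}=(1/2)^{k+1}$ is a positive constant independent of $n$, while $p_n\to 0$, so $p_n<(1/2)^{k+1}$ for all but finitely many $n$. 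For each such $n$, Lemma~\ref{lem:force_legal_system} gives that $W_{\Delta_{k,n}}'$ algebraically $\F_m$-fibers, which is the assertion since $m=\floor{\frac{k+1}{2}}$. (When $k=2$ this recovers the statement that $W_{\Delta_{2,n}}'$ algebraically fibers, matching the ``almost every induced subcomplex is connected'' discussion above.)

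The substantive mathematics lies upstream — in the random-subcomplex analysis of Theorem~\ref{thm:good-homology} and in the Morse-theoretic construction behind Lemma~\ref{lem:force_legal_system} — so the deduction itself is essentially bookkeeping. The only place where one could stumble is the constancy of the chromatic number: if one attempted the same argument with buildings whose rank (equivalently, whose number of vertex types) grew with $n$, the threshold $(1/2)^{\chi+1}$ would itself decay exponentially and one would have to compare its rate against the decay rate of $p_n$. Here that issue does not arise, because the type of the building is fixed.
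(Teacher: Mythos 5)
Your argument is correct and is essentially the paper's own proof: both combine Theorem~\ref{thm:good-homology} (exponential decay of the proportion of non-$(\floor{\frac{k-1}{2}})$-connected induced subcomplexes) with the observation that $\chi(\Delta_{k,n})=k$ (via the type/dimension coloring) and then apply Lemma~\ref{lem:force_legal_system} (equivalently Corollary~\ref{cor:almost_all_fiber}). Your extra remark that $\chi\ge k$ because chambers are $k$-cliques is harmless but unnecessary — only the upper bound $\chi\le k$ is needed to make the threshold $(1/2)^{\chi+1}$ independent of $n$.
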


We also produce some examples involving negative finiteness properties, in Theorem~\ref{thrm:low_dim_sharp_fiber} (in particular kernels that are of type $\F_2$ but not $\FP_3$), and examples of hyperbolic RACGs that virtually algebraically fiber with non-hyperbolic kernel, in Theorem~\ref{thrm:typeA_graph_fibering}.

As a first step towards Theorem~\ref{thm:good-homology}, we prove in Section~\ref{sec:random-subcomplexes-are-chamber} that random induced subcomplexes of a finite building $\Delta$ are chamber complexes, provided $\Delta$ is thick enough.
Here a chamber complex is a finite dimensional simplicial complex that roughly resembles a building, in that its maximal simplices (called chambers) all have the same dimension and such that any two chambers can be connected by a gallery.
Besides being chamber complexes, we also show that random induced subcomplexes of a thick enough finite building $\Delta$ also inherit a further property of $\Delta$, namely that they are unions of apartments.
A simplified version of this result can be stated as follows:

\begin{theorem*}[Theorem~\ref{thm:random-subcomplexes-of-arbitrary-sequences}]
Let $d \in \N$ and let $(\Delta_n)_{n \in \N}$ be a sequence of finite, $d$-dimensional Moufang buildings with thickness going to $\infty$ with $n$.
Suppose that for each $n$ every panel of $\Delta_n$ is contained in the same number of chambers.
Then the proportion of induced subcomplexes of $\Delta_{n}$ that are $d$-dimensional chamber complexes that are unions of apartments tends to $1$ as $n \rightarrow \infty$.
\end{theorem*}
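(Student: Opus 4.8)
The plan is to argue by induction on the dimension $d$. Concretely, I would establish the quantitative statement that there is a constant $c_d>0$ such that whenever $\Delta$ is a $d$-dimensional finite Moufang building in which every panel lies in exactly $q$ chambers, the proportion of induced subcomplexes of $\Delta$ that fail to be $d$-dimensional chamber complexes that are unions of apartments is at most $\exp(-c_d q)$; applying this to $\Delta=\Delta_n$ and letting the thickness $q=q_n\to\infty$ then gives the theorem. Throughout I identify an induced subcomplex with the vertex subset $V'\subseteq\Delta^{(0)}$ it spans, taken uniformly at random, so that each vertex is kept independently with probability $1/2$, and I say that a simplex \emph{survives} if all of its vertices lie in $V'$. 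I will use freely the structure theory of finite Moufang buildings: for a fixed Coxeter type all the combinatorial invariants of $\Delta$ --- the numbers of simplices of each dimension, of chambers, and of apartments --- are bounded by a fixed polynomial in $q$, and every residue of $\Delta$ is again a finite Moufang building, possibly reducible (a nontrivial join). Since the probabilistic estimates below are all exponentially small in $q$, the various union bounds over these polynomially many objects go through, and the exponential rate propagates through the induction.

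The purity and nonemptiness of $\Delta[V']$ come first. For a simplex $\sigma$ with $\dim\sigma<d$ the link $\lk_\Delta(\sigma)$ is a building of dimension $\geq 0$ and so has at least $q$ vertices; hence the probability that $\sigma$ survives while no vertex $v\notin\sigma$ with $\sigma\cup\{v\}\in\Delta$ survives is at most $2^{-q}$. Summing over all such $\sigma$ shows that, with probability $1-\exp(-\Omega(q))$, every maximal simplex of $\Delta[V']$ is a chamber; together with nonemptiness of $\Delta[V']$ (a near-certain event) this makes $\Delta[V']$ $d$-dimensional and pure. For the union-of-apartments property I would use strong transitivity of $\Aut(\Delta)$, which is available since $\Delta$ is Moufang, together with a counting argument, to exhibit through any fixed chamber $C$ at least $q$ apartments whose vertex sets agree only in the vertices of $C$ (greedy packing is possible because there are polynomially many apartments through $C$, each using a bounded number of vertices outside $C$). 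For distinct such apartments the events ``all vertices outside $C$ survive'' are independent, each of probability $2^{-m}$ for a fixed $m\geq 1$ depending only on the type, so the probability that $C$ survives but lies in no surviving apartment is at most $(1-2^{-m})^{q}=\exp(-\Omega(q))$; summing over $C$ and combining with purity, $\Delta[V']$ is a union of apartments of $\Delta$ with probability $1-\exp(-\Omega(q))$.

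The heart of the matter, and the step I expect to be the main obstacle, is gallery-connectedness. I would first show that $\Delta[V']$ is topologically connected: the $1$-skeleton of a thick finite building of bounded rank is a good expander whose minimum degree grows with $q$, and a first-moment argument then gives that $\Delta^{(1)}[V']$ is connected with probability $1-\exp(-\Omega(q))$ (the sets of intermediate size need a little care, using the strong expansion of $\Delta^{(1)}$). Next, for each vertex $v$ the link $\lk_\Delta(v)$ is a $(d-1)$-dimensional finite Moufang building in which every panel lies in at least $q$ chambers; applying the inductive hypothesis to its irreducible join factors --- and using that a join of chamber complexes that are unions of apartments is again such a complex, and that for $v\in V'$ one has $\lk_{\Delta[V']}(v)=\lk_\Delta(v)[V'\cap\lk_\Delta(v)^{(0)}]$, itself a uniformly random induced subcomplex --- shows that $\lk_{\Delta[V']}(v)$ is a gallery-connected chamber complex with probability $1-\exp(-\Omega(q))$, hence simultaneously for all $v$ after a union bound. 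Working on the intersection of these high-probability events, given chambers $C,C'$ of $\Delta[V']$ I would take an edge-path $v_0,\dots,v_r$ in $\Delta^{(1)}[V']$ from a vertex of $C$ to a vertex of $C'$; by purity each edge $\{v_{i-1},v_i\}$ lies in a chamber $D_i$ of $\Delta[V']$, any two consecutive chambers of the list $C,D_1,\dots,D_r,C'$ share a vertex, and gallery-connectedness of the link at that shared vertex bridges them by a gallery in $\Delta[V']$; concatenating yields a gallery from $C$ to $C'$. The base case $d=0$ is immediate: with probability $1-\exp(-\Omega(q))$ the set $V'$ is nonempty and has at least two elements, which in dimension $0$ is exactly the statement that $\Delta[V']$ is a chamber complex and a union of apartments.

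The principal difficulty is concentrated in the connectedness step: gallery-connectedness cannot be extracted from link data alone, since a disjoint union of surviving apartments would satisfy every local condition, so the global input of the expander estimate for $\Delta^{(1)}$ is essential; and one must keep every estimate in the earlier steps and in the inductive hypothesis exponentially small in the thickness, both to survive the union bounds over the polynomially many simplices, chambers and vertices involved, and so that the rate stays uniformly exponential under the induction. A secondary point is the apartment count in the union-of-apartments step, where the Moufang hypothesis genuinely enters; and the expander property of $\Delta^{(1)}$, while of a standard flavour, must be established (or located in the literature) for the buildings in play, including the reducible links that arise in the inductive step.
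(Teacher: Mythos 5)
Your proposal takes a genuinely different route from the paper's (induction on dimension via links plus a spectral/expander argument, versus the paper's "covering property" machinery), and the two issues you flag as secondary are in fact where the real work lies, so the argument as written has genuine gaps. The paper never needs spectral input or induction on dimension: it introduces an $n$-covering property (Definition~\ref{def:k-chamber-contractible}) — every subset $Y$ of at most $n$ vertices of $X$ lies in $\bigcup_i\op_{\Sigma_i}(D)$ for apartments $\Sigma_1,\dots,\Sigma_m\subseteq X$ all through a common chamber $D$ — and shows (Theorem~\ref{thm:k-chamber-contractible}, built on the magic-cube and opposition results of Section~\ref{sec:magic-squares}) that random induced subcomplexes have this property with exponentially high probability. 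Gallery-connectedness then comes essentially for free: given chambers $C_1,C_2\subseteq X$, apply the covering property to $C_1\cup C_2$ to obtain apartments $\Sigma_1\ni C_1$, $\Sigma_2\ni C_2$ inside $X$ sharing a common chamber $D$, and concatenate galleries $C_1\to D$ in $\Sigma_1$ with $D\to C_2$ in $\Sigma_2$.

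The first concrete gap is the expander step. You assert that $\Delta^{(1)}$ (and the $1$-skeleta of the reducible joins that arise as links) is a sufficiently strong expander that $\Delta^{(1)}[V']$ is connected with probability $1-\exp(-\Omega(q))$. This is the load-bearing claim of your gallery-connectedness argument, it is not standard for general finite buildings of bounded rank, and you give no proof or reference; without it the induction does not close. The second gap is the "greedy packing" of apartments through a fixed chamber $C$: to find $\Omega(q)$ apartments through $C$ that pairwise meet only in $C$ by a greedy argument, you would need an upper bound on how many apartments through $C$ contain a given vertex $w\notin C$, and you give none — a priori, apartments through $C$ could cluster around a common vertex outside $C$. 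Ruling this out is precisely the content of the paper's Lemma~\ref{lem:distribution-of-preimages}, Corollary~\ref{cor:independent-proj-chambers}, and Theorem~\ref{thm:disjoint-unions-of-apartments}, which use the Moufang property to control projections to the panels of $C$ and then an opposition argument to turn the resulting convex hulls into honest apartments; asserting greedy packing sidesteps the actual work.
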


We emphasize that, unlike Theorem~\ref{thm:good-homology}, Theorem~\ref{thm:random-subcomplexes-of-arbitrary-sequences} has no restriction on the (spherical) type of the $\Delta_n$.

One important step in proving Theorem~\ref{thm:random-subcomplexes-of-arbitrary-sequences} is establishing some ``flexibility'' for choosing a chamber with prescribed projection images onto a given set of panels.
The crucial observation is that this flexibility can be encoded with the help of higher dimensional analogs of magic squares (see Definition~\ref{def:magic-cube}).
This encoding then allows us to obtain the needed flexibility via an upper bound on the side lengths of a block of $0$-entries in a magic square (see Lemma~\ref{lem:zero-block} for a precise statement).

\medskip

We remark that some notions of randomness have already produced results in conjunction with the JNW Game.
For example, in \cite[Section~8]{JankiewiczNorinWise21} it is shown that in some sense for random $L$, $W_L'$ algebraically fibers.
More precisely, given a fixed number of vertices $n$, build a simple graph by including each potential edge independently with probability $p$, and then for $L$ the flag complex on this graph and $p$ in an appropriate range, $W_L'$ algebraically fibers (see \cite[Theorem~8.4]{JankiewiczNorinWise21}, and see \cite{FizPontiverosGlebovKarpas17} for a better bound on $p$).
Here we do not use random $L$, but rather have a fixed $L$ and in some sense inspect random induced subcomplexes of $L$.

For another idea of the usefulness of the JNW Game, very recently, Italiano--Martelli--Migliorini used it to find the first known example of a hyperbolic group that algebraically $\F_\infty$-fibers with the kernel not hyperbolic itself \cite{ItalianoMartelliMigliorini21}.
(In fact the kernel is \emph{type $F$}, meaning it has a finite classifying space.)
This was a major open problem, and such examples had previously only been found with $\F_\infty$ replaced by $\F_2$ \cite{Brady99,Lodha18,KrophollerVigolo21}.
In particular this gives the first example of a non-hyperbolic type $\F_\infty$ (even type $\F$) group with no Baumslag--Solitar subgroups.
It remains open whether for $n>2$ there exists a hyperbolic group that algebraically $\F_n$-fibers with a kernel that is not of type $\FP_{n+1}$, and the JNW Game seems like a promising avenue for resolving this in the future as well.

Finally, let us remark that the question of algebraic $\F_n$-fibering of a group $G$ of type $\F_n$ is connected to the Bieri--Neumann--Strebel--Renz (BNSR) invariants $\Sigma^n(G)$, developed in \cite{BieriNeumannStrebel87,BieriRenz88}.
More precisely, if $G$ algebraically $\F_n$-fibers then $\Sigma^n(G)$ is non-empty, and while the converse is not literally true, it is true that if $\Sigma^n(G)$ contains a pair of antipodal (rational) points then $G$ algebraically $\F_n$-fibers.
We will not define the BNSR-invariants here, since they will only come up briefly, but see \cite[Section~8]{Bux04} for all the relevant background and details.

This paper is organized as follows.
In Sections~\ref{sec:RACG} and~\ref{sec:morse} we recall some background material on RACGs and discrete Morse theory, respectively.
Section~\ref{sec:vaf_RACG} is the main section on virtual higher algebraic fibering, in which we recall the JNW Game, generalize it, and establish the key Lemma~\ref{lem:force_legal_system}.
In Section~\ref{sec:buildings} we recall some background on buildings.
Section~\ref{sec:magic-squares} is devoted to developing our generalization of magic squares, which leads to important independence results about projections to panels and chambers.
After collecting some technical results about certain sequences in Section~\ref{sec:calc}, we prove in Section~\ref{sec:random-subcomplexes-are-chamber} our results about random induced subcomplexes of certain finite buildings being chamber complexes.
Finally, in Section~\ref{sec:buildings-An} we prove our main results about finite buildings of type $A_n$.
Section~\ref{sec:graphs} is a quick but interesting observation about consequences in dimension $1$, and in Section~\ref{sec:apps} we prove our main results about virtual higher algebraic fibering of RACGs.

\subsection*{Acknowledgments} The authors are grateful to Mikhail Ershov, Dawid Kielak, Rob Kropholler, Kevin Schreve, and Stefan Witzel for a number of helpful discussions.
Thanks are also due to the anonymous referee for many helpful suggestions.
The first author was partially supported by the DFG grant WI 4079/4 within the SPP 2026 Geometry at Infinity.
The second author is supported by grant \#635763 from the Simons Foundation.

\section{Background on right-angled Coxeter groups}\label{sec:RACG}

Let $L$ be a finite flag complex and $W_L$ its associated RACG.
Recall that by an \emph{induced subcomplex} of $L$ we mean a subcomplex $X \leq L$ where a simplex $\sigma \subseteq L$ is contained in $X$ if and only if $X$ contains the vertex set of $\sigma$.
If $X$ is an induced subcomplex of $L$, then $W_X$ naturally embeds as a subgroup of $W_L$, called a \emph{standard parabolic subgroup}.
An \emph{induced square} in $L$ is an induced subgraph of $L^{(1)}$ that is a square.
Call $L$ \emph{square-free} if it has no induced squares.
Note that the RACG associated to a square is $D_\infty \times D_\infty$, which contains $\Z^2$.
In particular if $L$ has induced squares then $W_L$ is not (Gromov) hyperbolic.
The converse holds as well, by a result of Moussong \cite{Moussong88}, and so we have:

\begin{cit}\cite{Moussong88}\label{cit:hyp_RACG}
The RACG $W_L$ is hyperbolic if and only if $L$ is square-free.
\end{cit}

Every RACG $W_L$ has an associated CAT(0) cube complex $X_L$ called the \emph{Davis complex}, constructed as follows.
First we have $X_L^{(0)}=W_L$, and for every $g\in W_L$ and every simplex $\sigma$ in $L$, the $0$-cubes $g\prod_{v\in\tau^{(0)}}v$ for each $\tau\le \sigma$ span a cube in $X_L$.
(Here we include $\tau=\emptyset$.)
For example, if $\sigma$ is a $1$-simplex with vertex set $\{v,w\}$, then there is a $2$-cube in $X_L$ with set of $0$-cubes $\{g,gv,gw,gvw\}$.
It is clear that $X_L$ is simply connected, since the $2$-cubes correspond to the defining relations of $W_L$, and that the link of every vertex is isomorphic to $L$, hence is flag, so $X_L$ is CAT(0), hence contractible.

The action of $W_L$ on itself by left translation extends to a cubical action of $W_L$ on $X_L$.
This action is transitive and free on $0$-cubes, and $X_L$ is locally compact, so the action of $W_L$ on $X_L$ is \emph{geometric}, that is, proper (meaning cube stabilizers are finite) and cocompact (meaning the orbit space is compact).
In particular the action of any finite index subgroup of $W_L$ on $X_L$ is also geometric.

It is easy to see that the abelianization $W_L^{ab}$ is finite, namely $(\Z/2\Z)^{\abs{L^{(0)}}}$, so the commutator subgroup $W_L'$ has finite index in $W_L$.
The commutator subgroup $W_L'$ consists of all elements of $W_L$ with an even number of every generator.
The geometric action of $W_L'$ on $X_L$ will be of particular interest in what follows.
Note that the stabilizers in $W_L$ of cubes in $X_L$ are the conjugates of the standard parabolic subgroups generated by vertex sets of simplices in $L$.
Clearly $W_L'$ intersects all of these trivially, so the action of $W_L'$ on $X_L$ is free, and hence $W_L'$ is torsion-free.

\subsection{Virtual algebraic fibering of RACGs}\label{sec:vaf_racgs}

The question of which RACGs virtually algebraically fiber is clarified to some extent by a result of Kielak.
In \cite[Theorem~5.3]{Kielak20}, he proves that for an infinite finitely generated group $G$ that is virtually RFRS (residually finite rationally solvable), $G$ virtually algebraically fibers if and only if its first $L^2$-Betti number $\beta_1^{(2)}(G)$ is zero.
For example, a right-angled Artin group $A_L$ (which is indeed virtually RFRS \cite{Agol08}) whose defining flag complex $L$ is connected has $\beta_1^{(2)}(A_L)=0$ \cite[Corollary~2]{DavisLeary03}, and hence virtually algebraically fibers.
(Indeed $A_L$ already algebraically fibers, as the kernel of the map $A_L\to\Z$ sending every generator to $1$, called the Bestvina--Brady subgroup, is finitely generated \cite{BestvinaBrady97}.)

RACGs are also virtually RFRS \cite{Agol08}, so \cite[Theorem~5.3]{Kielak20} applies and we see that $W_L$ virtually algebraically fibers if and only if $\beta_1^{(2)}(W_L)=0$.
Less is known though about $\beta_1^{(2)}$ for RACGs than for RAAGs.
As an example of something that is known, if $L$ is a triangulation of an $n$-sphere ($n\ge 2$) then $\beta_1^{(2)}(W_L)=0$ \cite[Theorem~11.3.2]{DavisOkun01} and so $W_L$ virtually algebraically fibers.

In \cite{JankiewiczNorinWise21}, Jankiewicz--Norin--Wise set up the JNW Game, which we will discuss in Subsection~\ref{sec:JNW}, and use it to prove virtual algebraic fibering for a variety of examples of RACGs.
For example in \cite[Section~5]{JankiewiczNorinWise21} they prove this for $L$ equal to the $1$-skeleton of a $3$-cube, the $1$-skeleton of an icosahedron, and many other examples with ``lots of'' edges.
Throughout, they relate virtual algebraic fibering to the ``Charney--Davis $n$-curvature'' $\kappa_n(L)$ (see \cite[Subsection~3b]{JankiewiczNorinWise21}, and also \cite{CharneyDavis95EulerChar}).
This is defined by
\[
\kappa_n(L)\defeq \sum_{k=-1}^n (-1/2)^{k+1} \ell_k\text{,}
\]
where $\ell_k$ is the number of $k$-simplices of $L$ (so $\ell_{-1}=1$).
(In \cite[Subsection~3b]{JankiewiczNorinWise21} there is a typo: $(-2)^{k+1}$ should be $(-1/2)^{k+1}$.)
In particular
\[
\kappa_2(L)=1-\frac{\ell_0}{2}+\frac{\ell_1}{4} \text{.}
\]
One notable application of $\kappa_2$ is that if the JNW Game, which we will discuss in Subsection~\ref{sec:JNW}, succeeds and reveals that $W_L$ virtually algebraically fibers, then necessarily $\kappa_2(L)\ge 0$ \cite[Theorem~6.14]{JankiewiczNorinWise21}.
Also, in this case if moreover $\kappa_2(L)=0$ then $W_L$ virtually algebraically $\F_\infty$-fibers.

As the above indicates, if $\kappa_2(L)<0$ then $W_L$ does not virtually algebraically fiber.
Also just generally speaking, it is easier to find situations where $W_L$ does not virtually algebraically fiber.
Let us discuss some more such situations.

\begin{lemma}\label{lem:disconn_no_fiber}
If $L$ is disconnected then $W_L$ does not virtually algebraically fiber, unless $L=S^0$ so $W_L\cong D_\infty$.
\end{lemma}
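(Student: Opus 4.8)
The plan is to deduce the statement from Kielak's criterion recalled above. First, since $L$ is disconnected, write $L = L_1 \sqcup \cdots \sqcup L_k$ for its connected components, with $k \geq 2$ and each $L_i$ a nonempty (flag) complex. No vertex of $L_i$ is joined to a vertex of $L_j$ for $i \neq j$, so the defining presentation of $W_L$ splits as a free product $W_L \cong W_{L_1} \ast \cdots \ast W_{L_k}$. In particular $W_L$ is infinite and finitely generated, so by \cite[Theorem~5.3]{Kielak20} (RACGs being virtually RFRS \cite{Agol08}) it suffices to show $\beta_1^{(2)}(W_L) > 0$ whenever $L \neq S^0$.

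Next I would invoke the standard formula for the first $L^2$-Betti number of a free product (see, e.g., L\"uck's book on $L^2$-invariants): for nontrivial groups $G_1, \dots, G_k$ with $k \geq 2$,
\[
\beta_1^{(2)}(G_1 \ast \cdots \ast G_k) = (k-1) + \sum_{i=1}^{k} \beta_1^{(2)}(G_i) - \sum_{i=1}^{k} \frac{1}{\abs{G_i}} \text{,}
\]
with the convention $1/\abs{G_i} = 0$ when $G_i$ is infinite. Applying this with $G_i = W_{L_i}$, and using $\beta_1^{(2)}(W_{L_i}) \geq 0$ together with $\abs{W_{L_i}} \geq 2$ (each $L_i$ being nonempty), we obtain $\beta_1^{(2)}(W_L) \geq (k-1) - k/2 = (k-2)/2 \geq 0$.

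The \emph{heart} of the argument is then the equality analysis, which is the only place the hypothesis $L \neq S^0$ enters. If $k \geq 3$ the bound already gives $\beta_1^{(2)}(W_L) > 0$. If $k = 2$, then $\beta_1^{(2)}(W_L) = 1 + \beta_1^{(2)}(W_{L_1}) + \beta_1^{(2)}(W_{L_2}) - 1/\abs{W_{L_1}} - 1/\abs{W_{L_2}}$, which vanishes only if $\beta_1^{(2)}(W_{L_1}) = \beta_1^{(2)}(W_{L_2}) = 0$ and $\abs{W_{L_1}} = \abs{W_{L_2}} = 2$; but $\abs{W_{L_i}} = 2$ forces $L_i$ to be a single vertex (any $L_i$ with at least two vertices has $\abs{W_{L_i}} \geq 4$), hence $L = S^0$. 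Thus $L \neq S^0$ yields $\beta_1^{(2)}(W_L) > 0$, so $W_L$ does not virtually algebraically fiber. Finally, the excluded case is genuinely exceptional: for $L = S^0$ one has $W_L \cong \Z/2 \ast \Z/2 \cong D_\infty$, which contains $\Z$ as an index-$2$ subgroup, and $\Z$ trivially algebraically fibers.

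I do not expect any serious obstacle here; the only care needed is the bookkeeping in the free-product $L^2$-Betti formula (the convention for infinite factors, and the borderline case $k=2$). As an alternative that avoids $L^2$-invariants, one can argue via the number of ends: when $L \neq S^0$ the group $W_L$ is a free product of at least two nontrivial groups and is not isomorphic to $D_\infty$, hence has infinitely many ends, and so does every finite-index subgroup; on the other hand a group of the form $N \rtimes \Z$ with $N$ finitely generated has at most one end when $N$ is infinite and exactly two ends when $N$ is finite. Since a group that algebraically fibers is of this form, no finite-index subgroup of $W_L$ can algebraically fiber.
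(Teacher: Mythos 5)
Your proof is correct, and it takes a genuinely different (and arguably cleaner) route than the paper. The paper also starts from Kielak's criterion, but then invokes Davis--Okun \cite[Theorem~7.3.3]{DavisOkun01} as a black box to conclude that $\beta_1^{(2)}(W_L)=0$ with $L$ disconnected forces $L$ to be a disjoint union of simplices, whence $W_L$ is virtually free; it then needs a further (implicit) argument that an infinite virtually free group can only virtually algebraically fiber when it is virtually cyclic, singling out $D_\infty$. You instead compute $\beta_1^{(2)}(W_L)$ directly from the free-product decomposition $W_L \cong W_{L_1}\ast\cdots\ast W_{L_k}$ and the standard $L^2$-Betti formula for free products, obtaining the lower bound $(k-2)/2$ plus an explicit equality analysis that pinpoints $L=S^0$ in one stroke. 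This avoids the intermediate ``disjoint union of simplices'' detour and the virtually-free case analysis entirely, at the cost of relying on the free-product formula rather than on a ready-made RACG-specific theorem. Your alternative argument via ends is different again and more elementary still: it bypasses $L^2$-invariants altogether, using only Stallings-type end counting, the invariance of the number of ends under passage to finite-index subgroups, and the fact that a finitely generated group with a finitely generated infinite normal subgroup of infinite index has one end. All three routes are sound; yours make the exceptional role of $S^0$ more transparent.
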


\begin{proof}
Suppose $W_L$ does virtually algebraically fiber, so $\beta_1^{(2)}(W_L)=0$.
By \cite[Theorem~7.3.3]{DavisOkun01}, for disconnected $L$ this can only happen if $L$ is a disjoint union of simplices.
Hence $W_L$ is virtually free.
In particular $W_L$ cannot virtually algebraically fiber, unless $L=S^0$ so $W_L\cong D_\infty$.
\end{proof}

If $L$ is a planar graph with no induced cycles of length less than $6$, then \cite[Theorem~5]{KarNikolov14} shows that $\beta_1^{(2)}(W_L)=\kappa_2(L)$.
Combining this with \cite[Theorem~5.3]{Kielak20} and some calculations, we get the following reasonably strong restriction on virtual algebraic fibering of RACGs:

\begin{lemma}\label{lem:planar_no_fiber}
If $L$ is a planar graph with no cycles of length less than $6$, then $W_L$ does not virtually algebraically fiber, unless $L$ is a path of length $2$ (so $W_L\cong D_\infty\times \Z/2\Z$) or $L=S^0$ (so $W_L\cong D_\infty$).
\end{lemma}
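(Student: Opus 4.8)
The plan is to reduce the statement to an elementary count of the vertices and edges of $L$, using the fact recorded above --- via \cite{KarNikolov14} together with \cite{Kielak20} --- that for a planar graph with no induced cycles of length less than $6$ one has $\beta_1^{(2)}(W_L)=\kappa_2(L)$, and in particular that $W_L$ does not virtually algebraically fiber whenever $\kappa_2(L)<0$. Observe first that a triangle is a cycle of length $3<6$, so $L$ is triangle-free; hence as a flag complex $L$ is at most one-dimensional, i.e.\ an honest graph, and we write $\ell_0,\ell_1$ for its numbers of vertices and edges, so that $\kappa_2(L)=1-\tfrac{\ell_0}{2}+\tfrac{\ell_1}{4}$.

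Two degenerate situations are handled directly. If $W_L$ is finite --- equivalently (as $W_L$ is a RACG), $L$ is a simplex, hence, being triangle-free, a single vertex or a single edge --- then neither $W_L$ nor any finite-index subgroup of it surjects onto $\Z$, so $W_L$ does not virtually algebraically fiber. If $L$ is disconnected then Lemma~\ref{lem:disconn_no_fiber} yields the conclusion unless $L=S^0$. We may therefore assume $L$ is connected with $\ell_0\ge 3$; being triangle-free, $L$ is then not a simplex, so $W_L$ is infinite.

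It now suffices to show that $\kappa_2(L)\le 0$, with equality only when $L$ is the path of length $2$. If $L$ is a tree then $\ell_1=\ell_0-1$, so $\kappa_2(L)=\tfrac{3-\ell_0}{4}$, which is negative for $\ell_0\ge 4$ and is $0$ exactly when $\ell_0=3$, i.e.\ exactly when $L$ is the path of length $2$. If $L$ contains a cycle then, being planar of girth at least $6$, it satisfies the standard planar girth bound $\ell_1\le\tfrac{6}{6-2}(\ell_0-2)=\tfrac{3}{2}(\ell_0-2)$, hence $\kappa_2(L)\le 1-\tfrac{\ell_0}{2}+\tfrac{3(\ell_0-2)}{8}=\tfrac{1}{4}-\tfrac{\ell_0}{8}$; since a cycle of length at least $6$ forces $\ell_0\ge 6$, this gives $\kappa_2(L)\le-\tfrac{1}{2}<0$. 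In both cases $\kappa_2(L)\le 0$ with equality only for the path of length $2$, so the criterion quoted above shows $W_L$ does not virtually algebraically fiber unless $L$ is that path. This proves the lemma; the two exclusions are genuine, since $W_{S^0}\cong D_\infty$ and, for $L$ the path of length $2$, $W_L\cong D_\infty\times\Z/2\Z$, both of which do virtually algebraically fiber.

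The only step needing a little care is the borderline case $\kappa_2(L)=0$: the real content is the verification, via the tree count and the planar girth bound, that among connected triangle-free planar graphs on at least three vertices with no short cycles, $\kappa_2$ vanishes only for the path of length $2$. Everything else is routine bookkeeping over the degenerate configurations dealt with at the start.
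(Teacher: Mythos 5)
Your proof is correct and follows essentially the same route as the paper's: invoke the Kar--Nikolov identity $\beta_1^{(2)}(W_L)=\kappa_2(L)$ together with Kielak's criterion to reduce to $\kappa_2(L)=0$, dispose of disconnected $L$ via Lemma~\ref{lem:disconn_no_fiber}, and then split into the tree case (where $\ell_1=\ell_0-1$) and the non-tree case (where the planar girth bound $\ell_1\le\tfrac{3}{2}(\ell_0-2)$ applies). Your treatment is slightly more careful than the paper's in that you explicitly isolate the small simplex cases ($\ell_0\le 2$) before applying the counting argument, but the content is the same.
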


\begin{proof}
By Lemma~\ref{lem:disconn_no_fiber}, without loss of generality $L$ is connected.
By \cite[Theorem~5]{KarNikolov14} and \cite[Theorem~5.3]{Kielak20}, $W_L$ virtually algebraically fibers if and only if $1-\frac{\ell_0}{2}+\frac{\ell_1}{4}=0$.
If $L$ is a tree then $\ell_1=\ell_0-1$, so $0=1-\frac{\ell_0}{2}+\frac{\ell_0-1}{4}$, i.e., $\ell_0=3$, which means $L$ is a path of length $2$.
Now assume $L$ is not a tree.
Since $L$ is connected and has no cycles of length less than $6$, by \cite[Theorem~1.5.3]{Jungnickel13}, $\ell_1\le \frac{3}{2}(\ell_0-2)$.
Now $0=1-\ell_0/2+\ell_1/4 \le 1-\ell_0/2+3(\ell_0-2)/8 = (2-\ell_0)/8$, so $\ell_0\le 2$, which is impossible.
\end{proof}

In particular we see that for $L$ a tree, other than a path of length $2$, $W_L$ does not virtually algebraically fiber.
This is in stark contrast to RAAGs, where for $L$ a tree, $A_L$ algebraically $\F_\infty$-fibers \cite{BestvinaBrady97}.

\section{Background on discrete Morse theory}\label{sec:morse}

In this section we recall some details about discrete Morse theory that we will need later.
Let $Y$ be an affine cell complex, in the sense of \cite{BestvinaBrady97}, for example a simplicial or cubical complex.
A \emph{Morse function} is a map
\[
h\colon Y\to\R
\]
that restricts to an affine map on cells, is non-constant on each positive dimensional cell, and such that the image $h(Y^{(0)})$ of the vertex set is closed and discrete in $\R$.
(In \cite{BestvinaBrady97} it only says ``discrete'', but it is clear that it needs to be closed as well.)
Thanks to the hypotheses, we see that every cell has a unique vertex at which $h$ is maximized, and a unique vertex at which $h$ is minimized.
The \emph{ascending star} $\asst v$ of a vertex $v$ is the subcomplex of $Y$ consisting of all cells with $v$ as their vertex with minimum $h$ value, and their faces.
The \emph{ascending link} $\alk v$ of $v$ is the link of $v$ in $\asst v$, that is, the space of directions out of $v$ along which $h$ increases.
Analogously define the \emph{descending star} $\dst v$ and \emph{descending link} $\dlk v$.

The point of discrete Morse theory is that understanding the ascending and descending links of vertices can lead to an understanding of much larger, globally defined spaces, namely sublevel and superlevel sets.
For each $t\in\R$ let
\[
Y^{h\le t}\defeq h^{-1}((-\infty,t]) \text{ and } Y^{h\ge t}\defeq h^{-1}([t,\infty))
\]
be the \emph{sublevel set} and \emph{superlevel set} of $Y$ relative $h$ at level $t$.
Note that as $t$ varies, each of these forms a \emph{filtration} of $Y$, meaning a nested sequence of subspaces whose union is the whole space.
The following Morse Lemma relates the topology of the ascending/descending links to that of the super/sublevel sets, see, e.g., \cite[Corollary~2.6]{BestvinaBrady97}.

\begin{cit}[Morse Lemma]\label{cit:morse}
Let $h\colon Y\to\R$ be a Morse function.
Let $t\le s$ (allowing for $t=-\infty$ and $s=\infty$).
If $\alk v$ is $(m-1)$-acyclic (resp. $(m-1)$-connected) for all vertices $v$ with $t\le h(v)<s$, then the inclusion $Y^{h\ge s}\to Y^{h\ge t}$ induces an isomorphism in $\widetilde{H}_k$ (resp. $\pi_k$) for all $k\le m-1$ and a surjection in $\widetilde{H}_m$.
If $\dlk v$ is $(m-1)$-acyclic (resp. $(m-1)$-connected) for all vertices $v$ with $t<h(v)\le s$, then the inclusion $Y^{h\le t}\to Y^{h\le s}$ induces an isomorphism in $\widetilde{H}_k$ (resp. $\pi_k$) for all $k\le m-1$ and a surjection in $\widetilde{H}_m$.
\end{cit}

For example, if $s=\infty$ and $Y$ itself is, say, contractible, then as soon as all the $\alk v$ for $h(v)<t$ are $(m-1)$-acyclic or $(m-1)$-connected, so is $Y^{h\ge t}$.

Discrete Morse theory is especially powerful when coupled with Brown's Criterion, and has become a standard tool for deducing finiteness properties of groups.
Let us recall Brown's Criterion here, in the degree of generality we need.

\begin{cit}[Brown's Criterion]\cite{Brown87finprops}\label{cit:brown}
Let $Y$ be an $(m-1)$-acyclic (resp. $(m-1)$-connected) complex on which a group $G$ acts properly.
Let $\{Y_t\}_{t\in D}$ be a filtration of $Y$ indexed by a directed set $D$, so $Y$ is the union of the $Y_t$, and for $t\le s$ in $D$ we have $Y_t\subseteq Y_s$.
Suppose each $Y_t$ is $G$-invariant and cocompact.
Then $G$ is of type $\FP_m$ (resp. type $\F_m$) if and only if the filtration $\{Y_t\}_{t\in D}$ is essentially $(m-1)$-acyclic (resp. essentially $(m-1)$-connected).
\end{cit}

Here \emph{essentially $(m-1)$-acyclic} means for all $t\in D$ there exists $s\ge t$ such that the inclusion $Y_t\to Y_s$ induces the trivial map in $\widetilde{H}_k$ for all $k\le m-1$.
Similarly, \emph{essentially $(m-1)$-connected} is defined using $\pi_k$.

Discrete Morse theory is clearly useful for proving that a filtration coming from a Morse function is essentially $(m-1)$-acyclic or $(m-1)$-connected (and in fact usually proves something stronger, that the pieces of the filtration themselves are already $(m-1)$-acyclic or $(m-1)$-connected, bypassing the word ``essentially'').
In practice, the sublevel and superlevel sets relative to a Morse function may not be $G$-cocompact, but one could instead filter using $h^{-1}([-t,t])$, or use additional techniques from the world of BNSR-invariants.
We will not go into more detail here on all this, but will spell it out as it comes up later.

Let us also discuss a technique for proving that a filtration coming from a Morse function is not essentially $m$-acyclic, which is useful for proving that a group is not of type $\FP_m$.
The following is a purely homological version of \cite[Proposition~2.6]{Zaremsky17sepPB}, and the proof is essentially identical.
The proof uses \cite[Corollary~2.4]{Zaremsky17sepPB}, which was already purely homological.
Note that \cite{Zaremsky17sepPB} uses a different notion of ``Morse function'', but the definition we use here is a special case of that one, so everything from \cite{Zaremsky17sepPB} still applies.

\begin{proposition}\label{prop:not_ess_acyc}
Let $Y$ be an affine cell complex and $h\colon Y\to\R$ a Morse function.
Assume $\widetilde{H}_{m+1}(Y)=0$.
Suppose there exists $N\in\R$ such that for all vertices $v\in Y^{(0)}$ with $h(v)<N$ the ascending link $\alk v$ is $(m-1)$-acyclic and satisfies $\widetilde{H}_{m+1}(\alk v)=0$.
Assume moreover that for all $M\in\R$ there exists a vertex $v\in Y^{(0)}$ with $h(v)<M$ such that $\widetilde{H}_m(\alk v)\ne 0$.
Then the filtration $\{Y^{h\ge t}\}_{t\in\R}$ is not essentially $m$-acyclic.
\end{proposition}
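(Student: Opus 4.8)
The plan is to argue by contradiction: assume the filtration $\{Y^{h\ge t}\}_{t\in\R}$ is essentially $m$-acyclic and derive a contradiction with the hypothesis that $\widetilde{H}_m(\alk v)\ne 0$ for cofinally many $v$. The engine of the argument is the discrete Morse-theoretic description of the relative homology of a pair of superlevel sets in terms of the ascending links passed between them, namely $\widetilde{H}_n(Y^{h\ge a},Y^{h\ge b})\cong\bigoplus_{a\le h(v)<b}\widetilde{H}_{n-1}(\alk v)$ for $a\le b$; this is the content of \cite[Corollary~2.4]{Zaremsky17sepPB}, which we invoke. (Note that since $h(Y^{(0)})$ is closed and discrete, only finitely many vertex values lie in any bounded interval.) Feeding this into the long exact sequence of the pair $(Y^{h\ge a},Y^{h\ge b})$, and using the hypotheses only when $b\le N$ — so that every vertex $v$ with $a\le h(v)<b$ has $(m-1)$-acyclic ascending link with $\widetilde{H}_{m+1}(\alk v)=0$ — is what provides control.

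First I would propagate the vanishing of $\widetilde{H}_{m+1}$ down the filtration: for $a\le b\le N$ the group $\widetilde{H}_{m+2}(Y^{h\ge a},Y^{h\ge b})\cong\bigoplus_{a\le h(v)<b}\widetilde{H}_{m+1}(\alk v)$ vanishes, so the long exact sequence of the pair shows $\widetilde{H}_{m+1}(Y^{h\ge b})\to\widetilde{H}_{m+1}(Y^{h\ge a})$ is injective. The subsystem $\{\widetilde{H}_{m+1}(Y^{h\ge a})\}_{a\le N}$ is then a directed system with injective transition maps, and its colimit is $\widetilde{H}_{m+1}(Y)=0$; hence each $\widetilde{H}_{m+1}(Y^{h\ge b})$ injects into $0$ and is therefore trivial for every $b\le N$. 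With this in hand, the same long exact sequence (using now also $(m-1)$-acyclicity of the relevant ascending links, i.e.\ $\widetilde{H}_{m-1}(\alk v)=0$) collapses, for $a\le b\le N$, to the short exact sequence
\[
0\longrightarrow\bigoplus_{a\le h(v)<b}\widetilde{H}_m(\alk v)\xrightarrow{\ \partial\ }\widetilde{H}_m(Y^{h\ge b})\longrightarrow\widetilde{H}_m(Y^{h\ge a})\longrightarrow 0\text{,}
\]
so in particular $\partial$ is injective and $\widetilde{H}_m(Y^{h\ge b})\to\widetilde{H}_m(Y^{h\ge a})$ is surjective whenever $a\le b\le N$.

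Now I would apply the assumed essential $m$-acyclicity at $t=N$: there is some $s\le N$ with $\widetilde{H}_m(Y^{h\ge N})\to\widetilde{H}_m(Y^{h\ge s})$ the zero map. Since that map is also surjective by the displayed sequence (with $a=s$, $b=N$), we get $\widetilde{H}_m(Y^{h\ge s})=0$. Then for any $s'\le s$, the displayed sequence with $a=s'$, $b=s$ forces both $\widetilde{H}_m(Y^{h\ge s'})=0$ (it is a quotient of $\widetilde{H}_m(Y^{h\ge s})=0$) and $\bigoplus_{s'\le h(v)<s}\widetilde{H}_m(\alk v)=0$ (it injects into $\widetilde{H}_m(Y^{h\ge s})=0$). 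Letting $s'\to-\infty$, we conclude $\widetilde{H}_m(\alk v)=0$ for \emph{every} vertex $v$ with $h(v)<s$, which directly contradicts the hypothesis that for every $M$ there is a vertex $v$ with $h(v)<M$ and $\widetilde{H}_m(\alk v)\ne 0$ (take $M=s$). This completes the argument.

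The step I expect to be the crux — and where the seemingly auxiliary hypotheses $\widetilde{H}_{m+1}(Y)=0$ and $\widetilde{H}_{m+1}(\alk v)=0$ earn their keep — is the first one: propagating the vanishing of $\widetilde{H}_{m+1}$ down the filtration is precisely what makes the connecting homomorphisms $\partial$ injective, and it is that injectivity which converts the abundance of vertices with $\widetilde{H}_m(\alk v)\ne 0$ into homology in the superlevel sets that cannot all die. The remainder is diagram chasing in long exact sequences of pairs, plus a single application of essential $m$-acyclicity; since this mirrors \cite[Proposition~2.6]{Zaremsky17sepPB} almost verbatim, the write-up can be kept short.
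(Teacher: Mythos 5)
Your proof is correct, and it uses the same two core ingredients as the paper's argument — the Morse-theoretic decomposition $\widetilde{H}_n(Y^{h\ge a},Y^{h\ge b})\cong\bigoplus_{a\le h(v)<b}\widetilde{H}_{n-1}(\alk v)$ from \cite[Corollary~2.4]{Zaremsky17sepPB} and a proof by contradiction against essential $m$-acyclicity — but you package the diagram chase differently. The paper runs the argument in the other direction: it first applies the Morse Lemma (Citation~\ref{cit:morse}) plus essential $m$-acyclicity to conclude that $Y^{h\ge r}$ is $m$-acyclic for all $r\le s$, then locates a vertex $v$ with $h(v)<s$ and $\widetilde{H}_m(\alk v)\ne 0$, and uses Mayer–Vietoris together with Corollary~2.4 to push that nonvanishing class into $\widetilde{H}_{m+1}(Y^{h\ge q})$ for all $q$ (including $q=-\infty$), contradicting $\widetilde{H}_{m+1}(Y)=0$. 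You instead spend $\widetilde{H}_{m+1}(Y)=0$ up front, propagating it to $\widetilde{H}_{m+1}(Y^{h\ge b})=0$ for $b\le N$ via injectivity of the transition maps and a colimit argument, which then collapses the long exact sequence of the pair to a short exact sequence; from there essential $m$-acyclicity kills $\widetilde{H}_m(Y^{h\ge s})$ and forces $\widetilde{H}_m(\alk v)=0$ for all $v$ with $h(v)<s$, contradicting the cofinality hypothesis. The two routes use the hypotheses at the same places and amount to chasing the same long exact sequences from opposite ends: the paper's version is shorter and leans on the Morse Lemma as a black box, while yours is more explicit (making the role of $\widetilde{H}_{m+1}(\alk v)=0$ visible as the source of injectivity of the connecting maps) and dispenses with the separate invocation of the Morse Lemma. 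Both are fine; they buy slightly different expository clarity but no mathematical difference.
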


\begin{proof}
Suppose $\{Y^{h\ge t}\}_{t\in\R}$ is essentially $m$-acyclic.
Take some $t<N$, and choose $-\infty<s\le t$ such that the inclusion $Y^{h\ge t}\to Y^{h\ge s}$ induces $0$ in $\widetilde{H}_k$ for all $k\le m$.
Since $t<N$, the Morse Lemma also tells us that this inclusion induces a surjection in all these $\widetilde{H}_k$, so in fact $Y^{h\ge s}$ is $m$-acyclic.
In fact all $Y^{h\ge r}$ are $m$-acyclic for $r\le s$.
Choose $v\in Y^{(0)}$ with $h(v)<s$ and $\widetilde{H}_m(\alk v)\ne 0$.
Since every $Y^{h\ge r}$ is $m$-acyclic for $r\le s$, Mayer--Vietoris plus \cite[Corollary~2.4]{Zaremsky17sepPB} tell us that $\widetilde{H}_{m+1}(Y^{h\ge q})\ne 0$ for all $q\le h(v)$.
This includes $q=-\infty$, which is to say that $\widetilde{H}_{m+1}(Y)\ne 0$, a contradiction.
\end{proof}

This improves \cite[Proposition~2.6]{Zaremsky17sepPB} in two ways: our assumption using $(m-1)$-acyclic is weaker than the assumption there using $(m-1)$-connected, and our conclusion of, ``not essentially $m$-acyclic,'' is stronger than the conclusion there of, ``not essentially $m$-connected.''

\section{Higher algebraic fibering of commutator subgroups of right-angled Coxeter groups}\label{sec:vaf_RACG}

Let us recall the JNW Game developed by Jankiewicz--Norin--Wise in \cite{JankiewiczNorinWise21}, phrased in our current language, and extended to considerations of higher connectivity.

\subsection{The JNW Game}\label{sec:JNW}

Let $L$ be a finite flag complex.
Call a subset $\sigma\subseteq L^{(0)}$ of vertices a \emph{state}.
A state is \emph{$k$-legal} if the subcomplexes of $L$ induced by $\sigma$ and $L^{(0)}\setminus \sigma$ are both $k$-connected.
In particular $0$-legal is what is called ``legal'' in \cite{JankiewiczNorinWise21}.
For each vertex $v\in L^{(0)}$, choose a subset $\mu_v\subseteq L^{(0)}$, called a \emph{move}, satisfying $v\in \mu_v$ and for any $w$ adjacent to $v$, $w\not\in \mu_v$.
A \emph{system of moves} $M$ is a choice of a move for each vertex.
Note that it could happen that different vertices are assigned the same move.
Perhaps the most straightforward example of a system of moves is a \emph{colored system}: given a partition of $L^{(0)}$ into blocks such that adjacent vertices never share a block, the blocks form a systems of moves.

Now we make the following key observation: The power set $\mathcal{P}(L^{(0)})$ of the generating set $L^{(0)}$ of $W_L$ is in bijection with the quotient group $W_L/W_L'$.
Indeed, the latter is the finite abelian group $\mathcal{A}\defeq (\Z/2\Z)^{\abs{L^{(0)}}}$, which is obviously identifiable with $\mathcal{P}(L^{(0)})$.
This identification provides a group structure to $\mathcal{P}(L^{(0)})$, for example $\emptyset$ corresponds to the identity and the symmetric difference corresponds to the group operation.
Now any state or move is an element of $\mathcal{A}$, and any system of moves is a subset of $\mathcal{A}$.
Choose a system of moves $M=\{\mu_v\mid v\in L^{(0)}\}$, and let $\mathcal{M}$ be the subgroup of $\mathcal{A}$ generated by $M$.
Call the system of moves $M$ \emph{$k$-legal} if there exists a coset in $\mathcal{A}/\mathcal{M}$ whose elements are all $k$-legal, and call this a \emph{$k$-legal coset}.

The following is the higher algebraic fibering analog of Theorem~4.3 and Corollary~4.4 of \cite{JankiewiczNorinWise21}, and the proof is essentially the same.
We will take this opportunity to flesh out many of the details that were implicit in the proof in \cite{JankiewiczNorinWise21}.

\begin{proposition}\label{prop:JNW_game}
If $L$ admits an $(m-1)$-legal system of moves, then $W_L'$ algebraically $\F_m$-fibers.
\end{proposition}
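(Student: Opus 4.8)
The plan is to realize the desired fibration geometrically on the Davis complex $X_L$, and then extract finiteness of the kernel from discrete Morse theory together with Brown's Criterion. Fix an $(m-1)$-legal coset and choose a representative $C\in\mathcal{A}$, so that every element of $C+\mathcal{M}$, regarded as a state, is $(m-1)$-legal. Define $\varepsilon\colon W_L\to\mathcal{A}$ by $\varepsilon(v_1\cdots v_n)=C+\mu_{v_1}+\cdots+\mu_{v_n}$; since $\mathcal{A}$ is abelian of exponent $2$ this respects the relations $v^2=1$ and $vw=wv$, so $\varepsilon$ is well defined and $g\mapsto\varepsilon(g)-C$ factors through $W_L^{ab}=\mathcal{A}$. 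Regarding $\varepsilon(g)$ as a function $L^{(0)}\to\{\pm1\}$ whose set of $-1$'s is the subset $\varepsilon(g)$, put $h(v_1\cdots v_n)\defeq\sum_{i=1}^{n}\varepsilon(v_1\cdots v_{i-1})(v_i)$ on $X_L^{(0)}=W_L$ and extend affinely over cubes, so that $h\big(g\prod_{v\in\tau}v\big)=h(g)+\sum_{v\in\tau}\varepsilon(g)(v)$ for a face $\tau$ of a simplex $\sigma$ of $L$. I would then check that $h$ is a Morse function: the relation $v^2=1$ forces $\varepsilon(gv)(v)=-\varepsilon(g)(v)$, which holds exactly because $v\in\mu_v$; the relations $vw=wv$ with $\{v,w\}\in L^{(1)}$, and more generally affineness across a cube spanned by a simplex, require $\varepsilon(gv)(w)=\varepsilon(g)(w)$ whenever $v,w$ are adjacent, which holds exactly because $w\notin\mu_v$; and $h$ changes by $\pm1$ across each edge while taking values in the closed discrete set $\Z$. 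So the two conditions defining a move are precisely what make $h$ an honest Morse function.

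The link of any vertex $g$ of $X_L$ is canonically $L$, with $v\in L^{(0)}$ giving the edge $\{g,gv\}$, across which $h$ changes by $\varepsilon(g)(v)=\pm1$. Hence $\dlk g$ is the full subcomplex of $L$ on the state $\varepsilon(g)$ and $\alk g$ is the full subcomplex on the complementary state $L^{(0)}\setminus\varepsilon(g)$. As $\varepsilon(g)\in C+\mathcal{M}$ always lies in the fixed $(m-1)$-legal coset, both $\dlk g$ and $\alk g$ are $(m-1)$-connected, for \emph{every} vertex $g$. Now set $\varphi\defeq h|_{W_L'}$. For $g\in W_L'$ the sum $\mu_{v_1}+\cdots+\mu_{v_n}$ vanishes in $\mathcal{A}$ (it is the image of $g$ in $W_L^{ab}$ under $v\mapsto\mu_v$, and $W_L'=\ker(W_L\to W_L^{ab})$), so $\varepsilon(gx)=\varepsilon(x)$ for all $x$, whence $h(gx)=h(g)+h(x)$ on vertices and, by affineness, $h(g\cdot y)=\varphi(g)+h(y)$ for all $y\in X_L$. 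Thus $\varphi\colon W_L'\to\Z$ is a homomorphism and $h$ is $\ker\varphi$-invariant. It is nonzero: every state in an $(m-1)$-legal coset is nonempty, so starting at the vertex $1$ one can step repeatedly along an $h$-decreasing edge (at $g$ one such edge exists for each element of the nonempty set $\varepsilon(g)$), along which $h\to-\infty$; since $[W_L:W_L']<\infty$ this forces $\varphi$ to be unbounded. Rescaling $\varphi$, which does not change $\ker\varphi$, we may assume $\varphi$ is onto $\Z$.

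It remains to show $\ker\varphi$ is of type $\F_m$. It acts freely on the contractible complex $X_L$, since cube stabilizers in $W_L$ are finite and are met trivially by $W_L'\supseteq\ker\varphi$. Filter $X_L$ by $Y_t\defeq h^{-1}([-t,t])$, $t\in\N$; these are $\ker\varphi$-invariant, and each is $\ker\varphi$-cocompact because $W_L'$ acts cocompactly, $h$ is shifted by $\varphi$ under $W_L'$, and $\{g\in W_L':|\varphi(g)|\le t+R\}$ is a union of finitely many $\ker\varphi$-cosets (with $R$ bounding $h$ on a compact fundamental domain). Because all ascending and descending links are $(m-1)$-connected, discrete Morse theory (the Morse Lemma, Citation~\ref{cit:morse}) shows that each inclusion $Y_t\hookrightarrow Y_{t+1}$ — an enlargement which up to homotopy cones off descending links at the top and ascending links at the bottom — is an isomorphism on $\pi_k$ for $k\le m-1$; taking the colimit $\bigcup_tY_t=X_L$, which is contractible, shows each $Y_t$ is itself $(m-1)$-connected. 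So the filtration is essentially $(m-1)$-connected, and Brown's Criterion (Citation~\ref{cit:brown}) gives that $\ker\varphi$ is of type $\F_m$; together with the previous paragraph, $W_L'$ algebraically $\F_m$-fibers. The technical heart of the argument is the first two steps: checking that the combinatorial move data assembles into a genuine Morse function on $X_L$, with the conditions ``$v\in\mu_v$'' and ``no neighbor of $v$ in $\mu_v$'' matching the two families of defining relations, and that the ascending and descending links at \emph{all} vertices are governed by a single coset of states — the latter being exactly why a \emph{legal coset}, and not merely a legal state, is the right hypothesis. The Morse-theoretic and cocompactness bookkeeping in the last step is routine.
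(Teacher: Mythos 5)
Your proof is correct, and the construction of the Morse function from the system of moves---together with the observation that every vertex of $X_L$ has ascending/descending links governed by a single coset of states, which is exactly where the ``legal coset'' hypothesis enters---matches the paper's proof in all essentials, down to the verification that ``$v\in\mu_v$'' and ``$w\notin\mu_v$ for $w$ adjacent to $v$'' are what make $h$ well-defined across $2$-cubes. Your sign convention is opposite (you put $-1$ on the state), but that only swaps which link is ascending and which is descending, and both are $(m-1)$-connected.

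Where you genuinely depart from the paper is in the last step. The paper, having shown that both one-sided super- and sub-level sets $h^{-1}([t,\infty))$ and $h^{-1}((-\infty,t])$ are $(m-1)$-connected, concludes via the BNSR machinery that $[\pm\psi]\in\Sigma^m(W_L')$ and hence $\ker\psi$ is of type $\F_m$, citing Bux. You instead filter $X_L$ by the two-sided slabs $Y_t=h^{-1}([-t,t])$ and invoke Brown's Criterion. This buys you an argument that stays entirely within the tools the paper has already stated (the Morse Lemma and Brown's Criterion), at the cost of two extra obligations: checking $\ker\varphi$-cocompactness of the $Y_t$, which you do correctly, and appealing to a \emph{two-sided} Morse Lemma---that passing from $Y_t$ to $Y_{t+1}$ cones off descending links at the top and ascending links at the bottom. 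The Morse Lemma as stated in Citation~\ref{cit:morse} is one-sided (it relates $Y^{h\ge s}$ to $Y^{h\ge t}$, or $Y^{h\le t}$ to $Y^{h\le s}$), so it does not literally cover your claim that $Y_t\hookrightarrow Y_{t+1}$ is a $\pi_k$-isomorphism for $k\le m-1$. The two-sided statement is true and standard (it is essentially how the one-sided version is proved), but if you want your proof to rest only on what the paper cites, you should either prove the two-sided step explicitly or switch to the one-sided sets and go through the BNSR route as the paper does. The BNSR approach also has the advantage of not needing the cocompactness bookkeeping at all, since Bux's criterion is formulated precisely for non-cocompact super/sub-level sets.
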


\begin{proof}
Let $M$ be an $(m-1)$-legal system of moves, and let $\mathcal{M}$ be the subgroup of $\mathcal{A}$ generated by $M$.
Let $\sigma+\mathcal{M}$ be an $(m-1)$-legal coset of $\mathcal{M}$, so in particular $\sigma$ is an $(m-1)$-legal state.
We now construct a height function $h$ on the $0$-skeleton of $X_L$.
Assign the $0$-cube of $X_L$ corresponding to the identity element $1$ of $W_L$ a height of $h(1)=0$.
The $0$-cubes of $X_L$ adjacent to $1$ correspond to the generators of $W_L$, i.e., the vertices of $L$.
Any such vertex either lies in $\sigma$ or $L^{(0)}\setminus \sigma$.
If a given vertex $v$ of $L$ lies in $\sigma$, declare that the $0$-cube $v$ of $X_L$ has height $h(v)=1$.
If it lies in $L^{(0)}\setminus \sigma$, declare it has height $h(v)=-1$.
For a given such $v$, the $0$-cubes of $X_L$ adjacent to $v$ are of the form $vw$ for $w$ a vertex of $L$.
We assign heights to these $vw$ by inspecting whether $w$ lies in the ($(m-1)$-legal) state $\sigma+\mu_v$ or $L^{(0)}\setminus (\sigma+\mu_v)$.
(Intuitively, now that we are centered at $v$, we apply the move $\mu_v$ to recalibrate the state.)
If $w$ lies in $\sigma+\mu_v$, declare that the height of $vw$ is $h(vw)=h(v)+1$, and if $w$ lies in $L^{(0)}\setminus (\sigma+\mu_v)$, declare that $h(vw)=h(v)-1$.
Note that if $w=v$ then $vw=1$, so we need to make sure $h$ is well defined, but it is clear that $v\in \sigma$ if and only if $v\not\in \sigma+\mu_v$, so indeed we get $h(vv)=0$.
Continuing in this way, we assign heights to every $0$-cube of $X_L$.
To ensure well definedness we need to make sure that whenever $v$ and $w$ are adjacent vertices of $L$, and so $vw=wv$, a well defined height is assigned to $gvw=gwv$ for any $0$-cube $g$.
Indeed, this is clear from the definition of move, since no vertex can lie in the move of an adjacent vertex.
In other words, $h(gwv)-h(gw) = h(gv)-h(g)$ for any $w$ adjacent to $v$, since $v\not\in \mu_w$.

By now we have a function $h\colon X_L^{(0)}\to\Z$.
By construction, this function extends to a function $h\colon X_L\to\R$ that is affine on cubes.
It is clearly a Morse function.
For any $0$-cube $g$, the ascending link of $g$ is isomorphic to the induced subcomplex of $L$ spanned by all vertices in some state in the coset $\sigma+\mathcal{M}$, and the descending link is isomorphic to the subcomplex induced by the complement of this state.
Since this coset is $(m-1)$-legal, the ascending and descending links are all $(m-1)$-connected.
By the Morse Lemma (Citation~\ref{cit:morse}), this shows that for all $t\in\R$ the preimages $h^{-1}([t,\infty))$ and $h^{-1}((-\infty,t])$ are $(m-1)$-connected.

Now consider $W_L'$, viewed as a subset of $X_L^{(0)}$.
For any sequence of generators $v_1,\dots,v_k$, if $v_1\cdots v_k \in W_L'$ then every generator appears an even number of times, and so $\mu_{v_1}+\cdots+\mu_{v_k}=0$.
In particular for any $g\in W_L'$ and any $x\in W_L=X_L^{(0)}$, we have $h(gx)=h(g)+h(x)$.
This shows that $h$ restricted to $W_L'$ is a homomorphism $\psi\colon W_L'\to\Z$, and moreover the action of $W_L'$ on $X_L$ is $\psi$-equivariant.
This action is also proper and cocompact, since the same is true of $W_L$ and $W_L'$ has finite index in $W_L$.
Now we can finish the proof by appealing to BNSR-invariants.
Since $h^{-1}([t,\infty))$ and $h^{-1}((-\infty,t])$ are $(m-1)$-connected, we know that $[\pm\psi]\in\Sigma^m(W_L')$ by \cite[Definition~8.1]{Bux04}, and so the kernel of $\psi$ is of type $\F_m$ by \cite[Citation~8.4]{Bux04}.
\end{proof}

\begin{remark}\label{rmk:homological}
All of the above can be done homologically instead of homotopically.
We get a notion of \emph{homologically $(m-1)$-legal}, where we want the induced subcomplexes to be $(m-1)$-acyclic instead of $(m-1)$-connected, and this leads to $W_L'$ algebraically $\FP_m$-fibering.
The proof works analogously.
\end{remark}

\begin{remark}
Of course the point of Proposition~\ref{prop:JNW_game} is that it works for large $m$, but it is interesting to point out what happens when $m=0$.
In this case, as soon as $L$ is not a simplex, we claim there is a $(-1)$-legal system of moves.
Indeed, let $v$ and $w$ be non-adjacent vertices of $L$, set $\mu_v=\mu_w=\{v,w\}$, and for all $u\in L^{(0)}\setminus \{v,w\}$ set $\mu_u=\{u\}$.
Now the state $\{v\}$ is $(-1)$-legal, i.e., non-empty and with non-empty complement, and applying any combination of moves results in a state that either contains $v$ and not $w$, or $w$ and not $v$, hence is also $(-1)$-legal.
Thus this system of moves is $(-1)$-legal.
Confirming Proposition~\ref{prop:JNW_game}, for any $W_L$ with $L$ not a simplex, there is an epimorphism $W_L\to D_\infty$ and hence an epimorphism $W_L'\to\Z$.
\end{remark}

\subsection{Ensuring an $(m-1)$-legal system}\label{sec:force_legal_system}

Now we discuss a situation in which we can ensure that $L$ admits an $(m-1)$-legal system of moves.
The key is to get ``enough'' vertices of $L$ assigned to the same move, so as to make $M$ have small rank, and thus make the number of cosets of $\mathcal{M}$ in $\mathcal{A}$ large.
Then if ``most'' induced subcomplexes of $L$ are $(m-1)$-connected, the pigeonhole principle (roughly with non-$(m-1)$-connected induced subcomplexes being pigeons, and cosets of $\mathcal{M}$ in $\mathcal{A}$ being pigeonholes) will do the rest.

An easy way to construct a system of moves is using colorings.
The \emph{chromatic number} $\chi(L)$ of $L$ is the chromatic number of $L^{(1)}$, that is, the smallest number $\chi(L)\in\N$ such that there exists a function $c\colon L^{(0)}\to\{1,\dots,\chi(L)\}$ with no adjacent vertices mapping to the same element, called a \emph{coloring}.

\begin{definition}\label{def:bad_conn}
For each $k\ge-1$, let $\mathcal{F}_k(L)$ denote the set of induced subcomplexes $X$ of $L$ such that $X$ is not $k$-connected.
\end{definition}

\begin{lemma}\label{lem:force_legal_system}
If $\abs{\mathcal{F}_{m-1}(L)} < 2^{\abs{L^{(0)}}-\chi(L)-1}$ then $L$ admits an $(m-1)$-legal system of moves, and so $W_L'$ algebraically $\F_m$-fibers.
\end{lemma}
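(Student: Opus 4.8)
The plan is to use a pigeonhole argument, building a colored system of moves from a proper coloring of $L$ and then finding an $(m-1)$-legal coset of the resulting subgroup. First I would fix a coloring $c\colon L^{(0)}\to\{1,\dots,\chi(L)\}$ and let $M=\{C_1,\dots,C_{\chi(L)}\}$ be the associated colored system of moves, where $C_i=c^{-1}(i)$; since no two adjacent vertices share a color, each $C_i$ is indeed a valid move containing each of its vertices and no neighbor of any of its vertices. Let $\mathcal{M}\le\mathcal{A}$ be the subgroup generated by the $C_i$. Since $\mathcal{M}$ is generated by at most $\chi(L)$ elements, we have $\abs{\mathcal{M}}\le 2^{\chi(L)}$, hence the index $[\mathcal{A}:\mathcal{M}]\ge 2^{\abs{L^{(0)}}-\chi(L)}$, so there are at least $2^{\abs{L^{(0)}}-\chi(L)}$ cosets of $\mathcal{M}$ in $\mathcal{A}$.

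Next I would invoke Proposition~\ref{prop:JNW_game}: it suffices to show that $M$ is $(m-1)$-legal, i.e., that some coset $\sigma+\mathcal{M}$ consists entirely of $(m-1)$-legal states. Recall a state $\tau$ is $(m-1)$-legal precisely when both the induced subcomplex on $\tau$ and the induced subcomplex on its complement $L^{(0)}\setminus\tau$ are $(m-1)$-connected. Observe that if $\tau\in\mathcal{F}_{m-1}(L)$ makes a coset ``bad,'' then $\tau$ being bad is witnessed by either the induced subcomplex on $\tau$ or the one on its complement failing to be $(m-1)$-connected; in either case $\tau$ or its complement lies in $\mathcal{F}_{m-1}(L)$ (viewing elements of $\mathcal{A}$ as vertex subsets, and noting that an induced subcomplex is determined by its vertex set). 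So the number of non-$(m-1)$-legal states is at most $2\abs{\mathcal{F}_{m-1}(L)}$. (Here one should be slightly careful: a state $\tau$ and its complement are both non-$(m-1)$-legal simultaneously, so this count double-counts, but for the inequality we only need an upper bound.)

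Now comes the pigeonhole step. By hypothesis $\abs{\mathcal{F}_{m-1}(L)}<2^{\abs{L^{(0)}}-\chi(L)-1}$, so the number of non-$(m-1)$-legal states is at most $2\abs{\mathcal{F}_{m-1}(L)}<2^{\abs{L^{(0)}}-\chi(L)}\le[\mathcal{A}:\mathcal{M}]$. Hence the non-$(m-1)$-legal states cannot meet every coset of $\mathcal{M}$, so there is a coset $\sigma+\mathcal{M}$ all of whose elements are $(m-1)$-legal states. This is exactly an $(m-1)$-legal coset, so $M$ is an $(m-1)$-legal system of moves, and Proposition~\ref{prop:JNW_game} gives that $W_L'$ algebraically $\F_m$-fibers.

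The main subtlety — the part I expect to be most delicate to state correctly rather than genuinely hard — is the bookkeeping identifying a non-$(m-1)$-legal state with a member of $\mathcal{F}_{m-1}(L)$ (or its complement's membership) and tracking the factor of $2$, which is precisely why the bound in the hypothesis carries the extra $-1$ in the exponent. Everything else (validity of the colored system, the index estimate, and the final appeal to Proposition~\ref{prop:JNW_game}) is routine. The homological variant follows verbatim, replacing ``$(m-1)$-connected'' by ``$(m-1)$-acyclic'' throughout and using Remark~\ref{rmk:homological} in place of Proposition~\ref{prop:JNW_game}, yielding algebraic $\FP_m$-fibering.
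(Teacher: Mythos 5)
Your proof is correct and follows essentially the same route as the paper's: build a colored system of moves from a proper coloring of $L^{(1)}$, count the cosets of $\mathcal{M}$ in $\mathcal{A}$, and apply the pigeonhole principle with the factor-of-$2$ accounting (a state is bad if either it or its complement induces a non-$(m-1)$-connected subcomplex). The paper compresses this bookkeeping into a single parenthetical remark, but the counting you spell out is exactly what it has in mind.
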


\begin{proof}
Let $c\colon L^{(0)}\to\{1,\dots,\chi(L)\}$ be a coloring.
For each vertex $v$, let $\mu_v\defeq c^{-1}(c(v))$, so all the vertices of the same ``color'' as $v$.
These $\mu_v$ clearly form a (colored) system of moves $M$.
The subgroup $\mathcal{M}$ generated by $M$ has rank $\chi(L)$, hence order $2^{\chi(L)}$, so the number of cosets of $\mathcal{M}$ in $\mathcal{A}$ is $2^{\abs{L^{(0)}}-\chi(L)}$.
Since $\abs{\mathcal{F}_{m-1}(L)} < 2^{\abs{L^{(0)}}-\chi(L)-1}$, by the pigeonhole principle there must exist an $(m-1)$-legal coset (recall that for a state to be $(m-1)$-legal we need both it and its complement to induce an $(m-1)$-connected subcomplex, whence the extra factor of $2$).
Hence our system of moves is $(m-1)$-legal.
\end{proof}

\begin{example}\label{ex:bipartite}
As an example of the $m=1$ case, let $L$ be a bipartite graph, say with $n$ vertices, such that of the $2^n$ induced subgraphs of $L$, strictly fewer than $2^n/8$ are disconnected.
Then Lemma~\ref{lem:force_legal_system} says that $W_L'$ algebraically fibers.
\end{example}

We can also define
\[
\mathcal{F}_k^{hom}(L)
\]
to be the set of induced subcomplexes of $L$ that are not $k$-acyclic, and get a homological version of Lemma~\ref{lem:force_legal_system}: If $\abs{\mathcal{F}_{m-1}^{hom}(L)} < 2^{\abs{L^{(0)}}-\chi(L)-1}$ then $L$ admits a homologically $(m-1)$-legal system of moves, and so $W_L'$ algebraically $\FP_m$-fibers.

Note that finding the $(m-1)$-legal system of moves was more or less constructive, since it came directly from a choice of coloring of the vertices of $L$.
However, finding the specific $(m-1)$-legal coset was not constructive, and so in particular identifying an explicit character $W_L'\to\Z$ with kernel of type $\F_m$, the existence of which is guaranteed by Proposition~\ref{prop:JNW_game}, would be difficult.
There are a few things one could say about the resulting character, for example its kernel necessarily contains every simple commutator $vwvw$ of vertices $v$ and $w$ with different colors.

\begin{corollary}\label{cor:almost_all_fiber}
Let $(L_n)_{n\in\N}$ be a family of finite flag complexes, all with the same chromatic number $\chi$.
Suppose the function 
\[
\N \rightarrow \R_{\geq 0},\ n \mapsto \frac{\abs{\mathcal{F}_{m-1}(L_n)}}{2^{\abs{L_n^{(0)}}}}
\]
goes to $0$ as $n$ goes to $\infty$.
Then for all but finitely many $n$, $L_n$ admits an $(m-1)$-legal system of moves, so $W_{L_n}'$ algebraically $\F_m$-fibers.
\end{corollary}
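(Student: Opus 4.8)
The plan is to reduce immediately to Lemma~\ref{lem:force_legal_system}. Observe that its hypothesis $\abs{\mathcal{F}_{m-1}(L)} < 2^{\abs{L^{(0)}}-\chi(L)-1}$ is, upon dividing both sides by $2^{\abs{L^{(0)}}}$, equivalent to
\[
\frac{\abs{\mathcal{F}_{m-1}(L)}}{2^{\abs{L^{(0)}}}} < \frac{1}{2^{\chi(L)+1}} \text{.}
\]
In our family every $L_n$ has the same chromatic number $\chi$, so the right-hand side is the fixed positive constant $2^{-(\chi+1)}$, independent of $n$.

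First I would invoke the convergence hypothesis: since $n\mapsto \abs{\mathcal{F}_{m-1}(L_n)}/2^{\abs{L_n^{(0)}}}$ tends to $0$, there is some $N\in\N$ with this quantity strictly less than $2^{-(\chi+1)}$ for all $n\ge N$. By the displayed equivalence this says exactly that $\abs{\mathcal{F}_{m-1}(L_n)} < 2^{\abs{L_n^{(0)}}-\chi-1}$ for all $n\ge N$. Then I would apply Lemma~\ref{lem:force_legal_system} to each such $L_n$ to conclude that $L_n$ admits an $(m-1)$-legal system of moves, hence (via Proposition~\ref{prop:JNW_game}, already folded into that lemma) $W_{L_n}'$ algebraically $\F_m$-fibers. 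Since $\{n\in\N : n<N\}$ is finite, this proves the claim.

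There is essentially no obstacle here: the only input beyond Lemma~\ref{lem:force_legal_system} is that the chromatic number is held constant along the family, which is precisely what prevents the threshold $2^{-(\chi+1)}$ from shrinking with $n$. The homological analogue, replacing $\mathcal{F}_{m-1}$ by $\mathcal{F}_{m-1}^{hom}$ and ``algebraically $\F_m$-fibers'' by ``algebraically $\FP_m$-fibers'', follows in exactly the same way from the homological version of Lemma~\ref{lem:force_legal_system}.
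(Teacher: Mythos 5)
Your proof is correct and takes essentially the same approach as the paper: divide the hypothesis of Lemma~\ref{lem:force_legal_system} by $2^{\abs{L^{(0)}}}$, note that the constant chromatic number makes the threshold $2^{-(\chi+1)}$ independent of $n$, and apply the convergence assumption to get the inequality for all but finitely many $n$.
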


\begin{proof}
Let $n$ be large enough that $\frac{\abs{\mathcal{F}_{m-1}(L_n)}}{2^{\abs{L_n^{(0)}}}} < \frac{1}{2^{\chi+1}}$.
This means $\abs{\mathcal{F}_{m-1}(L_n)} < 2^{\abs{L_n^{(0)}}-\chi-1}$, so Lemma~\ref{lem:force_legal_system} says $L_n$ admits an $(m-1)$-legal system of moves, and $W_{L_n}'$ algebraically $\F_m$-fibers.
\end{proof}

Note that the sequence $\chi(L_n)$ need not be constant for this proof to work.
Indeed, we just need that $\frac{\abs{\mathcal{F}_{m-1}(L_n)}}{2^{\abs{L_n^{(0)}}}} < \frac{1}{2^{\chi(L_n)+1}}$ for all but finitely many $n$.
For example, if $\frac{\abs{\mathcal{F}_{m-1}(L_n)}}{2^{\abs{L_n^{(0)}}}}$ is exponentially decreasing in $n$, and $\chi(L_n)$ is sublinear in $n$, then the same result would hold.
We also note once again that the analogous homological version of Corollary~\ref{cor:almost_all_fiber} holds, using $\mathcal{F}_{m-1}^{hom}(L_n)$, homologically $(m-1)$-legal, and algebraically $\FP_m$-fibers.

\subsection{Negative finiteness properties}\label{sec:negative}

Let us now discuss a situation where we can say that $W_L'$ algebraically $\F_m$-fibers with a map $W_L'\to\Z$ whose kernel is not only of type $\F_m$ but specifically not of type $\FP_{m+1}$.

\begin{definition}[Sharply $k$-legal]
Call a state $\sigma\subseteq L^{(0)}$ \emph{sharply $k$-legal} if it is $k$-legal, so the subcomplexes of $L$ induced by $\sigma$ and $L^{(0)}\setminus \sigma$ are both $k$-connected, and moreover neither of these subcomplexes are $(k+1)$-acyclic, and moreover they both have trivial $(k+2)$nd homology.
If a system of moves generates a subgroup of $\mathcal{A}$ with a coset whose elements are all sharply $k$-legal, we will also call the system of moves and any such coset \emph{sharply $k$-legal}.
Also, we add the adverb ``homologically'' if we want to consider $k$-acyclic instead of $k$-connected.
\end{definition}

For example if the induced subcomplexes in question are all $(k+1)$-spheres, then the state is sharply $k$-legal.

\begin{lemma}\label{lem:JNW_game_with_negative}
If $L$ admits a sharply $(m-1)$-legal system of moves, then there is a map $W_L'\to\Z$ whose kernel is of type $\F_m$ but not $\FP_{m+1}$.
\end{lemma}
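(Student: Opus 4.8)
The plan is to run the same Morse-theoretic machine as in Proposition~\ref{prop:JNW_game}, building the height function $h\colon X_L\to\R$ from a sharply $(m-1)$-legal coset $\sigma+\mathcal{M}$, and then to extract the positive part (type $\F_m$) exactly as before while using Proposition~\ref{prop:not_ess_acyc} to obtain the negative part (not type $\FP_{m+1}$). So the first step is to recall that, by the construction in the proof of Proposition~\ref{prop:JNW_game}, for every $0$-cube $g$ of $X_L$ the ascending link $\alk g$ is isomorphic to the induced subcomplex of $L$ spanned by some state $\tau$ in the coset $\sigma+\mathcal{M}$, and the descending link $\dlk g$ is isomorphic to the induced subcomplex spanned by $L^{(0)}\setminus\tau$; moreover $h$ restricted to $W_L'$ is a homomorphism $\psi\colon W_L'\to\Z$ and the $W_L'$-action on $X_L$ is proper, cocompact, and $\psi$-equivariant. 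Since the coset is sharply $(m-1)$-legal, every such ascending and descending link is $(m-1)$-connected, is not $m$-acyclic, and has trivial $(m+1)$st homology.

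Next I would deduce the positive statement: since all $\alk g$ and $\dlk g$ are $(m-1)$-connected, the Morse Lemma (Citation~\ref{cit:morse}) shows each superlevel and sublevel set is $(m-1)$-connected, so $[\pm\psi]\in\Sigma^m(W_L')$ and $\ker\psi$ is of type $\F_m$, verbatim as in Proposition~\ref{prop:JNW_game}. For the negative statement I would apply Proposition~\ref{prop:not_ess_acyc} to $Y=X_L$ and the Morse function $h$, with the role of ``$m$'' in that proposition played by our $m$: we have $\widetilde H_{m+1}(X_L)=0$ because $X_L$ is contractible; every ascending link $\alk g$ is $(m-1)$-acyclic (indeed $(m-1)$-connected) and satisfies $\widetilde H_{m+1}(\alk g)=0$; and since the ascending links realize \emph{all} states in the coset $\sigma+\mathcal{M}$, in particular infinitely many $0$-cubes $g$ of arbitrarily negative height have $\alk g$ not $m$-acyclic, i.e.\ $\widetilde H_m(\alk g)\neq 0$. (That arbitrarily-negative-height vertices realize every coset element follows because left translation by an element of $W_L'$ shifts $h$ by a fixed amount and permutes $0$-cubes, so each coset element's induced subcomplex appears as $\alk g$ for $g$ in every ``level'' of $W_L'$; I should spell this out using the formula $h(gx)=h(g)+h(x)$ for $g\in W_L'$.) Proposition~\ref{prop:not_ess_acyc} then gives that the filtration $\{X_L^{h\ge t}\}$ is not essentially $m$-acyclic, so by Brown's Criterion (Citation~\ref{cit:brown}), applied to the cocompact $W_L'$-invariant filtration $\{X_L^{h\ge -t}\cap X_L^{h\le t}\}$ or directly via the BNSR reformulation, $[\psi]\notin\Sigma^{m+1}(W_L')$ and hence $\ker\psi$ is not of type $\FP_{m+1}$.

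The main obstacle I expect is the bookkeeping around cocompactness and the precise invocation of Brown's Criterion / the BNSR dictionary for the negative direction. The superlevel sets $X_L^{h\ge t}$ are not $W_L'$-cocompact, so to conclude ``not type $\FP_{m+1}$'' one must either pass to the symmetric filtration $\{h^{-1}([-t,t])\}$ (which \emph{is} cocompact) and transfer the failure of essential $m$-acyclicity from the superlevel filtration to it — this is exactly the kind of standard BNSR argument alluded to in the discussion after Citation~\ref{cit:brown} — or cite the appropriate statement from \cite{Bux04} that $[\psi]\notin\Sigma^{m+1}(W_L')$ follows from $\{X_L^{h\ge t}\}$ not being essentially $m$-acyclic together with $X_L$ being contractible and the descending data being controlled. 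Everything else (the construction of $h$, the homomorphism property, properness) is identical to Proposition~\ref{prop:JNW_game} and can be cited rather than repeated. I would also remark that the homological analogue holds with the same proof, replacing ``$(m-1)$-connected'' by ``$(m-1)$-acyclic'' throughout and using the homological conclusions of the Morse Lemma and of Brown's Criterion.
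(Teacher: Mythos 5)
Your proof is correct and matches the paper's argument: build $h$ as in Proposition~\ref{prop:JNW_game}, note that sharp $(m-1)$-legality makes every ascending and descending link $(m-1)$-connected with nontrivial $\widetilde H_m$ and trivial $\widetilde H_{m+1}$, and apply Proposition~\ref{prop:not_ess_acyc} to conclude the superlevel (and sublevel) filtration is not essentially $m$-acyclic, hence $[\psi]\notin\Sigma^{m+1}(W_L';\Z)$ and $\ker\psi$ is not of type $\FP_{m+1}$. Your detour about arbitrarily negative heights realizing every coset element is unnecessary: since \emph{every} state in the sharply $(m-1)$-legal coset fails to be $m$-acyclic, \emph{every} ascending link has $\widetilde H_m\ne 0$, so the hypothesis of Proposition~\ref{prop:not_ess_acyc} about vertices below any given height is automatic.
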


\begin{proof}
Returning to the proof of Proposition~\ref{prop:JNW_game} and all the notation therein, we have a map $\psi\colon W_L'\to\Z$, and its kernel is of type $\F_m$ by virtue of $h^{-1}([t,\infty))$ and $h^{-1}((-\infty,t])$ being $(m-1)$-connected for all $t$.
Now to see that the kernel is not of type $\FP_{m+1}$, it suffices to prove that the filtration $\{h^{-1}([t,\infty))\}_{t\in\R}$ of $X_L$ is not essentially $m$-acyclic, since then $[\psi]\not\in\Sigma^{m+1}(W_L';\Z)$.
We will actually show that neither $\{h^{-1}([t,\infty))\}_{t\in\R}$ nor $\{h^{-1}((-\infty,t])\}_{t\in\R}$ is essentially $m$-acyclic.
Since the system of moves is sharply $(m-1)$-legal, the ascending and descending links are not only $(m-1)$-connected (hence $(m-1)$-acyclic), but also have non-trivial $\widetilde{H}_m$ and trivial $\widetilde{H}_{m+1}$.
Hence Proposition~\ref{prop:not_ess_acyc} says that neither $\{h^{-1}([t,\infty))\}_{t\in\R}$ nor $\{h^{-1}((-\infty,t])\}_{t\in\R}$ is essentially $m$-acyclic.
\end{proof}

Analogously, if $L$ admits a sharply homologically $(m-1)$-legal system of moves, then there is a map $W_L'\to\Z$ whose kernel is of type $\FP_m$ but not $\FP_{m+1}$.

\begin{definition}\label{def:top_hlgy}
For each $k\ge 0$, let $\mathcal{T}_k(L)$ denote the set of induced subcomplexes of $L$ that have trivial $k$th reduced homology.
\end{definition}

\begin{lemma}\label{lem:force_sharply_legal_system}
Suppose $L$ has dimension $d$.
If $\abs{\mathcal{F}_{d-1}(L)} + \abs{\mathcal{T}_d(L)} < 2^{\abs{L^{(0)}}-\chi(L)-1}$ then $L$ admits a sharply $(d-1)$-legal system of moves, and so $W_L'$ algebraically $\F_d$-fibers with a map $W_L'\to\Z$ whose kernel is not of type $\FP_{d+1}$.
\end{lemma}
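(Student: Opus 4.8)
The plan is to run the pigeonhole argument of Lemma~\ref{lem:force_legal_system} essentially verbatim, upgrading ``$(m-1)$-legal'' to ``sharply $(d-1)$-legal'' and then invoking Lemma~\ref{lem:JNW_game_with_negative} in place of Proposition~\ref{prop:JNW_game}. First I would fix a coloring $c\colon L^{(0)}\to\{1,\dots,\chi(L)\}$ and take the colored system of moves $\mu_v\defeq c^{-1}(c(v))$; the subgroup $\mathcal{M}\le\mathcal{A}$ it generates has order $2^{\chi(L)}$, so there are exactly $2^{\abs{L^{(0)}}-\chi(L)}$ cosets of $\mathcal{M}$ in $\mathcal{A}$, i.e. of candidate cosets of states.

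Next I would pin down exactly which states fail to be sharply $(d-1)$-legal. The key observation is a dimension count: since $\dim L=d$, every induced subcomplex of $L$ has dimension at most $d$, hence has vanishing $(d+1)$st reduced homology automatically. Therefore the ``trivial $(k+2)$nd homology'' clause of the definition of sharply $(d-1)$-legal is free, and a state $\sigma$ is sharply $(d-1)$-legal precisely when both subcomplexes induced by $\sigma$ and $L^{(0)}\setminus\sigma$ are $(d-1)$-connected and neither is $d$-acyclic. Equivalently (identifying induced subcomplexes with their vertex sets), $\sigma$ is \emph{not} sharply $(d-1)$-legal if and only if $\sigma$ or its complement lies in $\mathcal{F}_{d-1}(L)\cup\mathcal{T}_d(L)$. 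Via the two maps $X\mapsto X^{(0)}$ and $X\mapsto L^{(0)}\setminus X^{(0)}$, the number of such ``bad'' states is at most $2\bigl(\abs{\mathcal{F}_{d-1}(L)}+\abs{\mathcal{T}_d(L)}\bigr)$.

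Then the pigeonhole step: by hypothesis this bound is strictly less than $2\cdot 2^{\abs{L^{(0)}}-\chi(L)-1}=2^{\abs{L^{(0)}}-\chi(L)}$, which is the number of cosets of $\mathcal{M}$ in $\mathcal{A}$, so some coset contains no bad state and is hence sharply $(d-1)$-legal. This produces a sharply $(d-1)$-legal system of moves, and applying Lemma~\ref{lem:JNW_game_with_negative} with $m=d$ gives the epimorphism $W_L'\to\Z$ whose kernel is of type $\F_d$ but not $\FP_{d+1}$. As usual there is a parallel homological version: replacing $\mathcal{F}_{d-1}(L)$ by $\mathcal{F}_{d-1}^{hom}(L)$ yields a sharply homologically $(d-1)$-legal system of moves and a map $W_L'\to\Z$ with kernel of type $\FP_d$ but not $\FP_{d+1}$.

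I do not anticipate a genuine obstacle here, since the argument is a direct adaptation of Lemma~\ref{lem:force_legal_system}. The only points requiring a moment's care are the bookkeeping of which states count as bad --- in particular the realization that the top-homology-vanishing condition is automatic from $\dim L=d$, so that failure is controlled exactly by the two families $\mathcal{F}_{d-1}(L)$ and $\mathcal{T}_d(L)$ together with their complements --- and keeping the factor of $2$ straight between ``a state versus its complement'' and ``the full number of cosets versus half of it''.
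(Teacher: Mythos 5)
Your proof is correct and follows essentially the same route as the paper's own (much terser) argument: run the pigeonhole of Lemma~\ref{lem:force_legal_system} with ``bad'' states now meaning those where one side lies in $\mathcal{F}_{d-1}(L)\cup\mathcal{T}_d(L)$, observe that the $(d+1)$st-homology clause is automatic since $\dim L=d$, and then invoke Lemma~\ref{lem:JNW_game_with_negative}. Your write-up just spells out the bookkeeping that the paper leaves implicit.
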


\begin{proof}
This follows by an analogous proof to Lemma~\ref{lem:force_legal_system}.
We see that there must exist a coset of $\mathcal{M}$ in $\mathcal{A}$, every element of which is a state such that it and its complement both induce subcomplexes of $L$ that are both $(d-1)$-connected and have non-trivial $d$th reduced homology.
Since $L$ is $d$-dimensional, no subcomplex can have non-trivial $(d+1)$st homology, so the system of moves generating $\mathcal{M}$ is sharply $(d-1)$-legal.
The rest now follows from Lemma~\ref{lem:JNW_game_with_negative}.
\end{proof}

The homological version works as well, by an analogous proof: If $\abs{\mathcal{F}_{d-1}^{hom}(L)} + \abs{\mathcal{T}_d(L)} < 2^{\abs{L^{(0)}}-\chi(L)-1}$ then $L$ admits a sharply homologically $(d-1)$-legal system of moves, and so $W_L'$ algebraically $\FP_d$-fibers with a map $W_L'\to\Z$ whose kernel is not of type $\FP_{d+1}$.

\begin{corollary}\label{cor:almost_all_sharply_fiber}
Let $(L_n)_{n\in\N}$ be a family of finite flag complexes, all with the same chromatic number $\chi$ and dimension $d$.
Suppose the function
\[
\N \rightarrow \R_{\geq 0},\ n \mapsto \frac{\abs{\mathcal{F}_{d-1}(L_n)}+\abs{\mathcal{T}_d(L_n)}}{2^{\abs{L_n^{(0)}}}}
\]
goes to $0$ as $n$ goes to $\infty$.
Then for all but finitely many $n$, $L_n$ admits a sharply $(d-1)$-legal system of moves, so $W_{L_n}'$ algebraically $\F_d$-fibers with a map $W_L'\to\Z$ whose kernel is not of type $\FP_{d+1}$.
\end{corollary}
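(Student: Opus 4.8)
The plan is to reduce this directly to Lemma~\ref{lem:force_sharply_legal_system}, exactly as Corollary~\ref{cor:almost_all_fiber} was deduced from Lemma~\ref{lem:force_legal_system}. Since $\chi$ is the common chromatic number of all the $L_n$, the threshold $(1/2)^{\chi+1}$ is a fixed positive constant that does not depend on $n$. By the hypothesis that the displayed ratio tends to $0$, there is some $N_0$ such that for all $n \ge N_0$ we have
\[
\frac{\abs{\mathcal{F}_{d-1}(L_n)}+\abs{\mathcal{T}_d(L_n)}}{2^{\abs{L_n^{(0)}}}} < \frac{1}{2^{\chi+1}} \text{.}
\]

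First I would clear denominators: multiplying through by $2^{\abs{L_n^{(0)}}}$ turns the inequality above into
\[
\abs{\mathcal{F}_{d-1}(L_n)}+\abs{\mathcal{T}_d(L_n)} < 2^{\abs{L_n^{(0)}}-\chi-1} = 2^{\abs{L_n^{(0)}}-\chi(L_n)-1} \text{,}
\]
using $\chi(L_n)=\chi$. This is precisely the hypothesis of Lemma~\ref{lem:force_sharply_legal_system} (applied with the common dimension $d$), so for each $n \ge N_0$ the complex $L_n$ admits a sharply $(d-1)$-legal system of moves. The conclusion of Lemma~\ref{lem:force_sharply_legal_system} — itself a consequence of Lemma~\ref{lem:JNW_game_with_negative} and Proposition~\ref{prop:JNW_game} — then gives that $W_{L_n}'$ algebraically $\F_d$-fibers via a map $W_{L_n}'\to\Z$ whose kernel is not of type $\FP_{d+1}$. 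Since this holds for all $n \ge N_0$, it holds for all but finitely many $n$, as claimed.

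There is no serious obstacle here: the entire content sits in Lemma~\ref{lem:force_sharply_legal_system} (a pigeonhole count over cosets of the move subgroup $\mathcal{M}$ in $\mathcal{A}$), and the present statement is just the observation that a sequence of ratios tending to $0$ eventually falls below a fixed bound. The only point worth flagging, as in the remark following Corollary~\ref{cor:almost_all_fiber}, is that constancy of $\chi$ and $d$ along the family is used only to keep the threshold fixed; one could instead assume merely that $\frac{\abs{\mathcal{F}_{d-1}(L_n)}+\abs{\mathcal{T}_d(L_n)}}{2^{\abs{L_n^{(0)}}}} < \frac{1}{2^{\chi(L_n)+1}}$ for all but finitely many $n$ (for instance, exponential decay of the numerator-ratio together with sublinear growth of $\chi(L_n)$), and the same conclusion would follow by the identical argument.
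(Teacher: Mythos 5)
Your proposal is correct and matches the paper's proof essentially verbatim: both pick $N_0$ so that the ratio drops below $1/2^{\chi+1}$, clear denominators, and invoke Lemma~\ref{lem:force_sharply_legal_system}. The closing remark about relaxing constancy of $\chi$ also mirrors the paper's commentary following Corollary~\ref{cor:almost_all_fiber}.
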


\begin{proof}
Let $n$ be large enough that $\frac{\abs{\mathcal{F}_{d-1}(L_n)}+\abs{\mathcal{T}_d(L_n)}}{2^{\abs{L_n^{(0)}}}} < \frac{1}{2^{\chi+1}}$.
This means that $\abs{\mathcal{F}_{d-1}(L_n)} + \abs{\mathcal{T}_d(L)} < 2^{\abs{L_n^{(0)}}-\chi-1}$, so Lemma~\ref{lem:force_sharply_legal_system} gives us the result.
\end{proof}

Once again the homological version holds too, by an analogous proof, using $\mathcal{F}_{d-1}^{hom}(L_n)$, homologically sharply $(d-1)$-legal, and algebraically $\FP_d$-fibers.

\section{Background on buildings}\label{sec:buildings}

The rest of this paper will deal with random subcomplexes of finite (spherical) buildings.
Let us start by recalling the definition of a building (see, e.g.,~\cite[Definition 4.1]{AbramenkoBrown08}).

\begin{definition}[Building]\label{def:building}
A simplicial complex $\Delta$ is called a \emph{building} if there is a set $\mathcal{A}$ consisting of subcomplexes $\Sigma \leq \Delta$, the so-called \emph{apartments}, that satisfy the following conditions
\begin{enumerate}
\item[(B0)] Each apartment $\Sigma \in \mathcal{A}$ is isomorphic to the Coxeter complex $\Sigma(W,S)$ of some Coxeter system $(W,S)$.
\item[(B1)] Every two simplices $A,B \subseteq \Delta$ are contained in some apartment $\Sigma \in \mathcal{A}$.
\item[(B2)] For every two apartments $\Sigma_1,\Sigma_2 \in \mathcal{A}$ there is an isomorphism $\Sigma_1 \rightarrow \Sigma_2$ fixing $\Sigma_1 \cap \Sigma_2$ pointwise.
\end{enumerate}
\end{definition}

In view of (B2), all the apartments must have the same Coxeter type, so any building $\Delta$ also has a well defined Coxeter type $(W,S)$.
If each apartment is finite, we say that $\Delta$ is a \emph{spherical} building.
Most of the buildings we consider are finite, and therefore spherical.
In general, all Coxeter complexes are finite-dimensional, so it makes sense to consider the set $\Ch(\Delta)$ of \emph{chambers}, i.e., maximal simplices, in $\Delta$.
More generally, if $X$ is a subcomplex of $\Delta$, we will write $\Ch(X) \subseteq \Ch(\Delta)$ to denote the set of chambers that are contained in $X$.
A simplex $P < \Delta$ of dimension $\dim(P) = \dim(\Delta)-1$ is called a \emph{panel}.
From (B1) it follows that for every two chambers $C,D \in \Ch(\Delta)$ there is a sequence $\Gamma = (E_i)_{i=1}^n$ of chambers $E_i \in \Ch(\Delta)$ with $E_1 = C$ and $E_n = D$ such that $E_i \cap E_{i+1}$ is a panel for $1 \leq i < n$.
In this case we write $\Gamma = E_1\ |\ E_2\ |\ \ldots\ |\ E_n$ and call $\Gamma$ a \emph{gallery} from $C$ to $D$.
More generally, if $A$ is a face of $C$ we say that $\Gamma$ is a gallery from $A$ to $D$.

\begin{definition}[Convex]\label{def:convex-hull}
Let $\Delta$ be a building and let $X \subseteq \Delta$ be a subcomplex that is a union of chambers of $\Delta$.
We say that $X$ is \emph{convex} if for all $C,D \in \Ch(X)$ every minimal gallery from $C$ to $D$ stays in $X$.
Given a subset $S \subseteq \Ch(\Delta)$ we define the \emph{convex hull} $\conv(S) \subseteq \Delta$ to be the smallest convex subcomplex containing $S$.
\end{definition}

We will use the following well-known result about the convex hull of two chambers.

\begin{lemma}\label{lem:conv-hull-of-two-chambers}
Let $C,D$ be two chambers in a building $\Delta$.
The convex hull $\conv(C,D)$ coincides with the union of all chambers that lie on a minimal gallery from $C$ to $D$.
\end{lemma}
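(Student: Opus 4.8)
The plan is to prove the two inclusions separately. Write $\Gamma = \bigcup_{\text{min. gallery } C\to D} \Gamma$ for the union of all chambers lying on a minimal gallery from $C$ to $D$, viewed as a subcomplex. First I would observe that $\Gamma \subseteq \conv(C,D)$ is essentially the definition: any convex subcomplex containing $C$ and $D$ must, by Definition~\ref{def:convex-hull}, contain every minimal gallery between them, so it contains $\Gamma$; taking the intersection over all such subcomplexes gives $\Gamma \subseteq \conv(C,D)$. (One should note in passing that $\Gamma$ really is a subcomplex, i.e.\ closed under taking faces, which is immediate since it is a union of chambers together with their faces.)

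The substantive direction is $\conv(C,D) \subseteq \Gamma$, for which it suffices to show that $\Gamma$ is itself convex, since $\conv(C,D)$ is the \emph{smallest} convex subcomplex containing $C$ and $D$. So I must show: for any two chambers $E, E' \in \Ch(\Gamma)$, every minimal gallery from $E$ to $E'$ stays inside $\Gamma$. The standard tool here is the $W$-valued (Weyl) distance function $\delta$ on a building and the characterization of minimal galleries via reduced words, together with the classical fact that $\conv(C,D)$ is the preimage under $\delta$ of a suitable ``interval'' in the Coxeter group. Concretely, work inside an apartment $\Sigma$ containing $C$ and $D$ (which exists by (B1)); there the chambers on minimal galleries from $C$ to $D$ are exactly those $E$ with $\delta(C,E)\delta(E,D) = \delta(C,D)$ and $\ell(\delta(C,E)) + \ell(\delta(E,D)) = \ell(\delta(C,D))$, i.e.\ $\delta(C,E)$ is a prefix of a reduced word for $\delta(C,D)$. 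One then checks, using the gate property of residues / projections (which the paper develops in terms of $\proj$ and uses heavily later), that this ``prefix'' condition is preserved along any minimal gallery: if $E$ and $E'$ both satisfy it and $E = E_1 \mid \cdots \mid E_k = E'$ is a minimal gallery, then each intermediate $E_i$ also lies on a minimal gallery from $C$ to $D$. This is a word-combinatorial statement in the Coxeter group: if $u = \delta(C,E)$ and $u' = \delta(C,E')$ are both prefixes of a fixed reduced word for $w = \delta(C,D)$, and $v = \delta(E,E')$ with $\ell(u') = \ell(u) + \ell(v) \cdot(\pm)$ along the gallery, then every prefix $u_i$ of the path from $u$ to $u'$ is again a prefix of a reduced word for $w$; this follows from the exchange/deletion condition.

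The step I expect to be the main obstacle is exactly this last combinatorial core: verifying that the set of chambers on minimal galleries from $C$ to $D$ is closed under passing to chambers on minimal galleries between its members. The cleanest route is to invoke the standard identification $\conv(C,D) = \Set{E}{\delta(C,E) \text{ is a prefix of some reduced expression for } \delta(C,D)}$ from the theory of buildings (e.g.\ Abramenko--Brown), reducing everything to the Coxeter-group statement about prefixes of reduced words being closed under taking sub-galleries in the Cayley graph; this is where all the real work sits, and it is a well-known lemma that one can either cite or prove via the deletion condition. Once that is in hand, convexity of $\Gamma$ is immediate and the proof closes.
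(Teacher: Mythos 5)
Your proposal lands on the same conclusion but takes a more laborious path than the paper's proof. The paper's argument is a two-line reduction: it cites the Coxeter-complex version of the statement outright (Abramenko--Brown 1.66), then for a general building picks an apartment $\Sigma$ containing $C$ and $D$ via (B1), observes $\conv(C,D)\subseteq\Sigma$ because apartments are convex (Abramenko--Brown 4.40), and transfers the conclusion. You perform the same apartment reduction implicitly (``work inside an apartment $\Sigma$ containing $C$ and $D$''), but then set out to reprove the Coxeter-complex case yourself: show that the union $\Gamma$ of chambers on minimal galleries from $C$ to $D$ is convex, using the Weyl distance $\delta$ and reduced-word combinatorics. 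That is a correct and legitimate unpacking of what the paper treats as a black-box citation, so your proof is valid; it is just longer where the paper is deliberately terse. Two remarks on execution. You should state explicitly that $\Gamma\subseteq\Sigma$, which is needed before any Weyl-distance description of $\Gamma$ inside $\Sigma$ can be invoked; this is again the convexity of apartments, i.e.\ the same [4.40] the paper cites. And the combinatorial core --- that the chambers $E$ with $\delta(C,E)$ a prefix of a reduced word for $\delta(C,D)$ form a convex set --- is true, but the exchange/deletion sketch you gesture at is the clunkiest available route. The cleanest argument is via walls: $E$ lies on a minimal gallery from $C$ to $D$ if and only if no wall of $\Sigma$ separates $E$ from both $C$ and $D$, so $\Gamma\cap\Sigma$ is an intersection of half-spaces (one for each wall not separating $C$ from $D$, namely the half-space containing both), and an intersection of half-spaces is convex. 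Either way you would be reproving Abramenko--Brown 1.66, which the paper simply cites; the paper's approach buys brevity, yours buys self-containedness modulo the standard Coxeter combinatorics.
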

\begin{proof}
If $\Delta$ is a Coxeter complex the lemma is an easy exercise as can be found in~\cite[1.66]{AbramenkoBrown08}.
Suppose now that $\Delta$ is an arbitrary building.
By (B1) there is an apartment $\Sigma \subseteq \Delta$ containing $C$ and $D$.
Since every apartment in a building is convex (see, e.g.,~\cite[4.40]{AbramenkoBrown08}), we see that $\conv(C,D) \subseteq \Sigma$.
Now the lemma follows from the case of Coxeter complexes.
\end{proof}

Given two chambers $C,D$ in a building $\Delta$, we define their \emph{gallery distance} $d_{\Delta}(C,D)$ to be the length of a minimal gallery from $C$ to $D$.
Suppose that $\Delta$ is spherical.
Then it follows from (B1) that the \emph{diameter} of $\Delta$, given by
\[
\diam(\Delta) \defeq \max \Set{d_{\Delta}(C,D)}{C,D \in \Ch(\Delta)},
\]
is finite.
From the convexity of apartments in buildings (see, e.g.,~\cite[4.40]{AbramenkoBrown08}) we further obtain $d_{\Delta}(C,D) = d_{\Sigma}(C,D)$ whenever $\Sigma$ is an apartment containing $C$ and $D$.
The case where $d_{\Delta}(C,D) = \diam(\Delta)$ will be of special interest for us.
If $C$ is a chamber in an apartment $\Sigma$ of $\Delta$, then there is a unique chamber $D \subset \Sigma$ with $d_{\Delta}(C,D) = \diam(\Delta)$ (see, e.g.,~\cite[1.57]{AbramenkoBrown08}).
In view of this, the following definition makes sense.

\begin{definition}[Opposition]\label{def:opposition}
Let $C$ be a chamber in a spherical building $\Delta$.
A chamber $D \in \Ch(\Delta)$ is called \emph{opposite} to $C$ in $\Delta$ if $d_{\Delta}(C,D) = \diam(\Delta)$.
For each apartment $\Sigma \subseteq \Delta$ with $C \subset \Sigma$ we define $\op_{\Sigma}(C)$ to be the unique chamber in $\Sigma$ that is opposite to $C$.
The set of all chambers in $\Delta$ that are opposite to $C$ will be denoted by $\Opp_{\Delta}(C)$.
More generally, if $\mathcal{E} \subseteq \Ch(\Delta)$ is a subset, we define
\[
\Opp_{\Delta}(\mathcal{E}) \defeq \bigcap \limits_{C \in \mathcal{E}} \Opp_{\Delta}(C)
\]
as the set of chambers in $\Delta$ that are opposite to every chamber in $\mathcal{E}$.
\end{definition}

\smallskip

\noindent A major theme later will be that random subcomplexes of a finite building $\Delta$ tend to be highly connected, assuming the so-called thickness of $\Delta$ is large compared to the dimension of $\Delta$.

\begin{definition}[Thickness]\label{def:thickness}
Let $\Delta$ be a building.
The \emph{thickness} of $\Delta$, denoted by $\thickness(\Delta)$, is the minimal number $t$ such that every panel of $\Delta$ is contained in at least $t$ chambers.
If $\thickness(\Delta) \geq 3$, we say that $\Delta$ is a \emph{thick} building.
In the special case where every panel of a thick building $\Delta$ is contained in exactly $\thickness(\Delta)$ chambers, we say that $\Delta$ is \emph{uniformly thick}.
\end{definition}

Buildings of type $A_n$ are the easiest to describe.
Such buildings will be the subject of Section~\ref{sec:buildings-An}, where we prove some results on higher connectivity properties of random subcomplexes of $A_n$-buildings.

\begin{example}\label{ex:An_building}
Let $d \in \N$ and let $V$ be a $(d+1)$-dimensional vector space over a field $\Field$.
Consider the graph $\Gamma$ whose vertex set consists of the non-trivial proper subspaces of $V$ and where two vertices $U,W$ are connected by an edge if either $U \subset W$ or $W \subset U$.
Let $A(V)$ denote the flag complex of $\Gamma$, i.e., the simplicial complex whose $1$-skeleton is given by $\Gamma$ and where a finite set $\sigma$ of vertices of $\Gamma$ spans a simplex in $A(V)$ if and only if every $2$-element subset of $\sigma$ spans an edge in $\Gamma$.
The complex $A(V)$ is a building of type $A_{d}$ and its dimension is given by $d-1$ (see, e.g.,~\cite[Section 4.2]{AbramenkoBrown08}).
\end{example}

We will make use of the following notions of (relative) links and stars.

\begin{definition}[(Relative) star/link]\label{def:link-and-star}
Let $X$ be a simplicial complex.
For every simplex $\sigma \subseteq X$ we define the \emph{star} of $\sigma$ in $X$, denoted by $\st_X(\sigma)$, to be the subcomplex of $X$ consisting of all simplices $\tau \subseteq X$ joinable to $\sigma$.
Further we define the \emph{link} of $\sigma$ in $X$, denoted by $\lk_X(\sigma)$, to be the subcomplex of $X$ consisting of simplices $\tau \subseteq \st_X(\sigma)$ with $\sigma \cap \tau = \emptyset$.
More generally, if $Y \leq X$ is a subcomplex and $\sigma \subseteq X$ is a simplex, we define the \emph{relative star} $\st_Y(\sigma) \defeq \st_X(\sigma) \cap Y$ and the \emph{relative link} $\lk_Y(\sigma) \defeq \lk_X(\sigma) \cap Y$ of $\sigma$ in $Y$.
\end{definition}

A function $\N\to\N$ is called \emph{polynomially bounded} if it is bounded above by a polynomial.

\begin{lemma}\label{lem:polynomial-bounded}
For every $d \in \N$ there is a polynomially bounded function $q_d \colon \N \rightarrow \N$ with the following property.
If $\Delta$ is a finite, thick, $d$-dimensional building $\Delta$, such that every panel $P$ of $\Delta$ is contained in at most $t$ chambers, then the number of cells in $\Delta$ is bounded above by $q_d(t)$.
\end{lemma}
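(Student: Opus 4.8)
The plan is to bound the number of cells of $\Delta$ in terms of $\abs{\Ch(\Delta)}$, bound $\abs{\Ch(\Delta)}$ by counting galleries emanating from a fixed chamber, and then bound $\diam(\Delta)$ by a constant depending only on $d$. Since $\Delta$ is a building, it is a chamber complex, so every simplex of $\Delta$ is a face of some chamber; as a chamber is a $d$-simplex and hence has exactly $2^{d+1}$ faces (counting the empty face), the number of cells of $\Delta$ is at most $2^{d+1}\abs{\Ch(\Delta)}$. It therefore suffices to bound $\abs{\Ch(\Delta)}$ by a polynomial in $t$.

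Fix a chamber $C_0\in\Ch(\Delta)$. Every chamber $D$ is the terminal chamber of some minimal gallery $C_0=E_0\mid E_1\mid\dots\mid E_k$, where $k=d_\Delta(C_0,D)\le\diam(\Delta)$. Given $E_{i-1}$, the next chamber $E_i$ is determined by a choice of one of the $d+1$ panels of $E_{i-1}$ together with a choice of a chamber other than $E_{i-1}$ containing that panel, of which there are at most $t-1$. Hence there are at most $\bigl((d+1)(t-1)\bigr)^k$ galleries of length $k$ starting at $C_0$, so
\[
\abs{\Ch(\Delta)}\ \le\ \sum_{k=0}^{\diam(\Delta)}\bigl((d+1)(t-1)\bigr)^k\ \le\ \bigl(\diam(\Delta)+1\bigr)\bigl((d+1)t\bigr)^{\diam(\Delta)}\text{.}
\]

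The crucial step --- and, I expect, the main obstacle --- is to bound $\diam(\Delta)$ by a constant $D(d)$ depending only on $d$. Since $\Delta$ is finite, its apartments are finite, so its Coxeter type $(W,S)$ is a finite Coxeter system of rank $\abs{S}=d+1$; and (by the discussion in Section~\ref{sec:buildings}) $\diam(\Delta)$ equals the diameter of any apartment of $\Delta$, hence depends only on $(W,S)$. For distinct $s,t\in S$, any rank-$2$ residue of $\Delta$ of cotype $\{s,t\}$ is the link of a $(d-2)$-simplex of $\Delta$, hence a finite building of type $I_2(m_{st})$, i.e.\ a finite generalized $m_{st}$-gon; using the thickness of $\Delta$ one checks directly that such a residue is thick. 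By the Feit--Higman theorem, a finite thick generalized $m$-gon exists only for $m\in\{2,3,4,6,8\}$, so every off-diagonal entry $m_{st}$ of the Coxeter matrix of $W$ is at most $8$. There are only finitely many Coxeter systems of rank $d+1$ all of whose Coxeter-matrix entries are at most $8$, so their diameters are bounded by a constant $D(d)$. Plugging this into the display, the number of cells of $\Delta$ is at most $2^{d+1}\bigl(D(d)+1\bigr)\bigl((d+1)t\bigr)^{D(d)}$, and we may take $q_d(t)\defeq 2^{d+1}(D(d)+1)(d+1)^{D(d)}\,t^{D(d)}$. The only genuinely nontrivial input is the Feit--Higman bound; without thickness the lemma is false, as thin buildings of type $I_2(m)$ with $m\to\infty$ already show. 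Finally, the case $d=0$ is immediate, since then $\Delta$ is just a finite set of at most $t$ vertices.
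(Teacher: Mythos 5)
Your proof is correct and takes essentially the same route as the paper's: bound the number of chambers by counting galleries of length at most $\diam(\Delta)$ emanating from a fixed chamber, and invoke the Feit--Higman theorem to reduce to finitely many Coxeter types of rank $d+1$, hence a bound $D(d)$ on the diameter. Two small points where you are slightly more careful than the paper: (i) you explicitly pass from chambers to all cells via the $2^{d+1}$ faces of a $d$-simplex, whereas the paper's proof stops at bounding $\abs{\Ch(\Delta)}$ (a harmless gap, since the ratio is a constant depending only on $d$); and (ii) the paper quotes the packaged consequence of Feit--Higman for higher-rank buildings (the list $A_n, C_n, D_n, E_n, F_4, G_2, I_2(8)$), while you derive it from scratch by applying Feit--Higman to the rank-$2$ residues to force $m_{st}\in\{2,3,4,6,8\}$ and then observing that there are only finitely many rank-$(d+1)$ Coxeter matrices with bounded entries, which makes your version more self-contained (no classification of finite Coxeter groups needed). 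Both arguments are sound.
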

\begin{proof}
Let $\Delta$ be a finite $d$-dimensional building of type $(W,S)$.
Recall that the maximal gallery distance in $\Delta$ is given by \[
\diam(\Delta) = \diam(\Sigma(W,S)) \eqdef \ell_{W,S}.
\]
Since every chamber $C \subset \Delta$ contains exactly $d+1$ panels and every panel is contained in at most $t$ chambers it follows that there are at most $(d+1)(t-1)$ chambers $D \subset \Delta$ that share a panel with $C$.
Thus for every $k \in \N$ there are at most $(d+1)^k(t-1)^k$ galleries of length $k$ in $\Delta$ that start with $C$.
Since every chamber $D \subset \Delta$ can be reached by a gallery of length at most $\ell_{W,S}$ starting from $C$ we see that the number of chambers in $\Delta$ is bounded above by $q_{W,S} \defeq \sum \limits_{k=0}^{\ell_{W,S}} (d+1)^k(t-1)^k$. 
Note that $q_{W,S}$ is a polynomial in $t$ that only depends on the type of $\Delta$.

To prove the lemma it remains to apply the well-known result of Feit and Higman~\cite{FeitHigman64} that every connected component of the Coxeter diagram of a finite thick building is of type $A_n$, $C_n$, $D_n$, $E_n$, $F_4$, $G_2$, or $I_2(8)$.
From this it follows that for every $d \in \N$ there are only finitely many Coxeter types $X$ of finite thick $d$-dimensional buildings, so that we can define $q_d$ as the pointwise maximum of the polynomials $q_X$.
\end{proof}

Let $\Aut(\Delta)$ denote the group of type preserving automorphisms of $\Delta$.
To ensure that $\Aut(\Delta)$ acts ``transitively enough'' for our purposes, we will often work with buildings that satisfy the \emph{Moufang} property.
This seemingly strong restriction was proven by Tits~\cite[Satz 1]{Tits77} to hold for every irreducible, thick, spherical building of dimension at least $2$.
To define the Moufang property for spherical buildings, we have to recall the notion of a \emph{root} $R$ in a spherical Coxeter complex $\Sigma$.
Informally, $R$ can be characterized as a subcomplex of $\Sigma$ that lies on one side of one of the hyperplanes that give rise to the cell structure of $\Sigma$.
We can therefore think of roots as subcomplexes that look like hemispheres.
More formally, a subcomplex $R \subset \Sigma$ is a root if there is a chamber $C \in \Ch(\Sigma)$ and a panel $P \subset C$ such that $R$ is the union of all chambers $E \in \Ch(\Sigma)$ that can be reached by a minimal gallery from $P$ to $E$ that starts with $C$.

\begin{definition}[Moufang]\label{def:Moufang}
Let $\Delta$ be a spherical building, let $\Sigma \subseteq \Delta$ be an apartment, and let $R \subset \Sigma$ be a root.
We say that a panel $P \subset R$ is an \emph{interior panel} of $R$ if it is not contained in the boundary $\partial R$.
The \emph{root group} corresponding to $R$, denoted by $U_R$, consists of the automorphisms $\alpha \in \Aut(\Delta)$ that fix the star $\st_{\Delta}(P)$ of every interior panel $P \subset R$ pointwise.
The building $\Delta$ is called \emph{Moufang} if all root groups $U_{R}$ act transitively on the sets of the form $\Ch(\st_{\Delta}(P)) \setminus \{C\}$, where $P \subset \partial R$ is a panel and $C$ is the unique chamber in $R$ that has $P$ as a face.
\end{definition}

\section{Independent projections}\label{sec:magic-squares}

Recall that a magic square is a square matrix consisting of non-negative integers such that the sums of integers in each row and column coincide.
In this section we set up our ``magic cubes'' construction, which introduces a higher dimensional, probabilistic generalization of magic squares.
From the viewpoint of ordinary magic squares our first goal is to prove that certain distributions of $0$-entries cannot arise if the sum in each row and column is positive.
Next we observe that for a finite building $\Delta$, every $n$-element set $\{P_1,\ldots,P_n\}$ of panels in $\Delta$ gives rise to an $n$-dimensional magic cube whose entries are parametrized by the set $\prod_{i=1}^{n} \Ch(\st(P_i))$ of sequences of chambers in the stars of these panels.
In this situation, the $0$-entries of the magic cube correspond to sequences of chambers $(E_i)_{i=1}^{n}$ that cannot arise as a sequence of projection images of the form $(\pr_{P_i}(C))_{i=1}^{n}$, where $C$ is an arbitrary chamber in $\Delta$.
Using our observations on general magic cubes, we will then prove Theorem~\ref{thm:disjoint-unions-of-apartments}, which provides us with the existence of certain sets of chambers in $\Delta$ that will play a key role in Section~\ref{sec:random-subcomplexes-are-chamber}.

\subsection{Magic cubes}

\begin{definition}[Magic cube]\label{def:magic-cube}
Let $n \in \N$ and let $X$ be a set.
A map $\mu \colon \mathcal{P}(X^n) \rightarrow \N_0$ is called an \emph{$n$-dimensional magic cube over $X$} if there is a number $N$, the \emph{weight of $\mu$}, such that
\begin{enumerate}
\item $\mu$ is a measure on $(X^n,\mathcal{P}(X^n))$.
\item For every $x \in X$ and every $1 \leq i \leq n$ the preimage of $x$ under the projection $\pi_i\colon X^n\to X$ to the $i$th coordinate satisfies $\mu(\pi_i^{-1}(x)) = N$.
\end{enumerate}
We say that a subset $A \subseteq X^n$ has positive weight if $\mu(A) > 0$.
In the case of a singleton $A = \{a\}$ we write $\mu(a) = \mu(A)$ and say that $a$ has positive weight if $\mu(a) > 0$.
\end{definition}

The following example provides us with an easy way of constructing magic cubes.

\begin{example}\label{ex:magic-cube}
Let $n \in \N$ and let $X,Y$ be finite non-empty sets.
For each $1 \leq i \leq n$, let $f_i \colon Y \rightarrow X$ be a map such that the fibers $f_i^{-1}(x)$ have the same cardinality $N$ for all $1 \leq i \leq n$ and $x\in X$.
Note that in this case $N = \abs{Y} \cdot \abs{X}^{-1}$.
Consider the map
\[
f \colon Y \rightarrow X^n,\ y \mapsto (f_1(y),\ldots,f_n(y)).
\]
For every $1 \leq i \leq n$ and every $x\in X$ we have $\abs{f^{-1}(\pi_i^{-1}(x))} = \abs{f_i^{-1}(x)} = N$.
Thus we see that
\[
\mu \colon \mathcal{P}(X^n) \rightarrow \N_0,\ A \mapsto \abs{f^{-1}(A)}
\]
is an $n$-dimensional magic cube of weight $N$ over $X$.
\end{example}

For a set $X$ let $\Sym(X)$ denote the group of permutations on $X$.

\begin{remark}\label{rem:permutation-invariance}
Let $\mu$ be an $n$-dimensional magic cube of weight $N$ over a finite set $X$, let $\sigma_i \in \Sym(X)$ for $1 \leq i \leq n$, and let $\sigma \defeq \sigma_1 \times \ldots \times \sigma_n \in \Sym(X^n)$ be the product of these permutations.
Consider the pushforward measure $\sigma_{\ast}(\mu)$ of $\mu$ with respect to $\sigma$, i.e., the measure on $X^n$ given by $\sigma_{\ast}(\mu)(A) = \mu(\sigma^{-1}(A))$ for all $A \subseteq X^n$.
Note that $\sigma_{\ast}(\mu)$ satisfies
\[
\sigma_{\ast}(\mu)(\pi_i^{-1}(x)) = \mu(\pi_i^{-1}(\sigma_i^{-1}(x))) = N
\]
for every $x \in X$ and every $1 \leq i \leq n$.
Thus $\sigma_{\ast}(\mu)$ is an $n$-dimensional magic cube of weight $N$ over $X$ as well.
\end{remark}

\begin{lemma}\label{lem:zero-block}
Let $\mu$ be an $n$-dimensional magic cube of weight $N > 0$ over $X \defeq \{1,\ldots,t\}$.
If there is some $1 \leq k \leq t$ with $\mu(\{1,\ldots,k\}^n)=0$, then $\frac{k}{t} < \frac{n^2}{1+n^2}$.
\end{lemma}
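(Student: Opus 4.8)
The plan is to prove the statement by a short counting argument: I would classify the tuples $y\in X^n$ according to which of their coordinates land in $B\defeq\{1,\ldots,k\}$, and then exploit that the ``all coordinates in $B$'' class is exactly the one assumed to have weight $0$. This in fact yields the stronger bound $\frac{k}{t}\le\frac{n-1}{n}$, from which the claim follows with room to spare.

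Concretely, for each $S\subseteq\{1,\ldots,n\}$ put $A_S\defeq\Set{y\in X^n}{y_i\in B\text{ iff }i\in S}$. The sets $A_S$ partition $X^n$, and the hypothesis $\mu(\{1,\ldots,k\}^n)=0$ says precisely that $\mu(A_{\{1,\ldots,n\}})=0$. Two identities then drive the argument. First, for each $1\le i\le n$ the defining property of a magic cube gives $\mu(\pi_i^{-1}(B))=\sum_{x\in B}\mu(\pi_i^{-1}(x))=kN$; since $\pi_i^{-1}(B)=\bigsqcup_{S\ni i}A_S$, summing over $i$ and interchanging the order of summation yields $\sum_{S}\abs{S}\,\mu(A_S)=nkN$. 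Second, the same computation with $B$ replaced by all of $X$ gives $\mu(X^n)=tN$, i.e., $\sum_S\mu(A_S)=tN$.

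Next I would combine the two. Since $\{1,\ldots,n\}$ is the only subset of size $n$ and it contributes $0$ to the first sum, every remaining term has $\abs{S}\le n-1$, so $nkN=\sum_{S}\abs{S}\,\mu(A_S)\le(n-1)\sum_S\mu(A_S)=(n-1)tN$. Dividing by $N>0$ gives $nk\le(n-1)t$, i.e., $\frac{k}{t}\le\frac{n-1}{n}$ (when $n=1$ this would force $k\le 0$, contradicting $k\ge 1$, so in that case the hypothesis is vacuous). A one-line computation --- cross-multiplying and rearranging to the always-true inequality $n^2-n+1>0$ --- then shows $\frac{n-1}{n}<\frac{n^2}{1+n^2}$, completing the proof.

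I do not expect a serious obstacle; the only thing to get right is the bookkeeping behind the disjoint decomposition $\pi_i^{-1}(B)=\bigsqcup_{S\ni i}A_S$ and the double-counting identity $\sum_{i=1}^n\sum_{S\ni i}\mu(A_S)=\sum_S\abs{S}\,\mu(A_S)$, both immediate from finite additivity of $\mu$. One can also phrase the whole argument probabilistically: sampling $y\in X^n$ with probability proportional to $\mu$, the number of coordinates of $y$ lying in $B$ has expectation $nk/t$ by linearity, yet it is never equal to $n$ and hence is at most $n-1$ almost surely, so $nk/t\le n-1$.
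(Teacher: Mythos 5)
Your argument is correct, and it takes a genuinely different route from the paper's. The paper slices $X^n$ by the value of the first coordinate, chains two pigeonhole steps (first within each slice $S_i=\pi_1^{-1}(i)$ to locate a coordinate $j\geq 2$ carrying weight $>N/n$ outside $B$, then across the $k$ slices to find a single popular $j$), and compares the resulting lower bound $kN/n^2$ against the exact value $(t-k)N$ of $\mu(\pi_j^{-1}(\{k+1,\ldots,t\}))$. You instead partition $X^n$ by the ``membership pattern'' of its coordinates in $B$, and double-count $\sum_i \mu(\pi_i^{-1}(B))$ two ways --- this is precisely the linearity-of-expectation argument you sketch at the end. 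The payoff is real: your method yields the tight bound $k/t\leq\frac{n-1}{n}$, which is strictly stronger than the paper's $k/t<\frac{n^2}{1+n^2}$ for every $n$ (the inequality $\frac{n-1}{n}<\frac{n^2}{1+n^2}$ reduces to $n^2-n+1>0$), and the bound $\frac{n-1}{n}$ is actually achieved, e.g.\ by the ``anti-diagonal'' magic square on $\{1,2\}^2$ with $k=1$. It also sidesteps the ceiling/floor bookkeeping in the paper's proof. Both proofs are short, and for the purposes of Corollary~\ref{cor:zero-block} only some fixed constant is needed, so the paper's weaker constant is harmless --- but your version would give $\lceil t/n\rceil$ rather than $\lceil t/(1+n^2)\rceil$ chambers in Corollary~\ref{cor:zero-block}, an improvement that would propagate (harmlessly but visibly) through Proposition~\ref{prop:independent-proj-panels} and its consequences.
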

\begin{proof}
We consider the slices $S_i \defeq \{i\} \times X^{n-1}$ for $1\leq i \leq k$.
If $x = (i,x_2,\ldots,x_n) \in S_i$ has positive weight, then there is at least one coordinate $2 \leq j \leq n$ with $x_j > k$.
From our assumption we know that $\mu(S_i) = N > 0$.
Thus we can find a number $f(i) \in \{2,\ldots,n\}$ with
\[
\mu(S_i \cap \pi_{f(i)}^{-1}(\{k+1,\ldots,t\})) > \frac{N}{n}.
\]
This gives us a function $f \colon \{1,\ldots,k\} \rightarrow \{2,\ldots,n\}$.
Let $j \in \{2,\ldots,n\}$ be such that the cardinality of $f^{-1}(j)$ is maximal.
In this case we have $\abs{f^{-1}(j)} > \frac{k}{n}$.
Thus there are $k_0 \defeq \ceil{\frac{k}{n}}$ distinct elements $1 \leq i_1 < \ldots < i_{k_0} \leq k$ with
\[
\mu(S_{i_l} \cap \pi_{j}^{-1}(\{k+1,\ldots,t\})) > \frac{N}{n}
\]
for $1 \leq l \leq k_0$.
Since the slices $S_{i_l}$ are pairwise disjoint we obtain
\begin{align*}
\mu(\pi_{j}^{-1}(\{k+1,\ldots,t\}))
&\geq \mu\left(\left(\bigcup \limits_{l=1}^{k_0} S_{i_l}\right) \cap \pi_{j}^{-1}(\{k+1,\ldots,t\})\right)\\
&= \mu\left(\bigcup \limits_{l=1}^{k_0} (S_{i_l} \cap \pi_{j}^{-1}(\{k+1,\ldots,t\}))\right)\\
&= \sum \limits_{l=1}^{k_0} \mu(S_{i_l} \cap \pi_{j}^{-1}(\{k+1,\ldots,t\}))\\
&> \sum \limits_{l=1}^{k_0} \frac{N}{n}\\
&\geq \frac{kN}{n^2}.
\end{align*}
On the other hand, the weight of $\pi_{j}^{-1}(\{k+1,\ldots,t\})$ is given by
\[
\mu(\pi_{j}^{-1}(\{k+1,\ldots,t\}))
= \sum \limits_{i=k+1}^{t} \mu(\pi_{j}^{-1}(i))
= \sum \limits_{i=k+1}^{t} N
= (t-k) \cdot N.
\]
Hence we get the inequality $\frac{k N}{n^2} < (t-k) \cdot N$ from which it is easy to see that $\frac{k}{t} < \frac{n^2}{1+n^2}$.
\end{proof}

\begin{corollary}\label{cor:zero-block}
Let $\mu$ be an $n$-dimensional magic cube of weight $N > 0$ over $X \defeq \{1,\ldots,t\}$.
Then there are permutations $\sigma_i \in \Sym(X)$ for $1 \leq i \leq n$ such that $(\sigma_1(i),\ldots,\sigma_n(i))$ has positive weight for all $1 \leq i \leq \ceil{\frac{t}{1+n^2}}$.
\end{corollary}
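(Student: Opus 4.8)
The plan is to build the required generalized diagonal greedily, one entry at a time. Concretely, I would construct a sequence of positive-weight entries $e_1,\dots,e_m\in X^n$ with $m=\ceil{\frac{t}{1+n^2}}$ that is \emph{coordinate-disjoint}, meaning that for each coordinate $1\le j\le n$ the $j$th coordinates of $e_1,\dots,e_m$ are pairwise distinct. Once such a family is in hand, each partial injection $i\mapsto (e_i)_j$ (for $1\le i\le m$) extends to a permutation $\sigma_j\in\Sym(X)$, since $m\le t$; then $(\sigma_1(i),\dots,\sigma_n(i))=e_i$ has positive weight for every $1\le i\le m$, which is exactly the assertion.

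The heart of the matter is the extension step: if $e_1,\dots,e_k$ is already coordinate-disjoint and of positive weight, and if $k\le \frac{t}{1+n^2}$, then there is a positive-weight $e_{k+1}$ keeping the family coordinate-disjoint. To produce it, let $S_j\subseteq X$ be the complement of the set of values already used in coordinate $j$, so $\abs{S_j}=t-k$; it then suffices to show $\mu\bigl(\prod_{j=1}^n S_j\bigr)>0$, since $\mu$ is a measure on a finite set and hence some point of $\prod_j S_j$ would then have positive weight. This is where Remark~\ref{rem:permutation-invariance} is used: choosing $\tau_j\in\Sym(X)$ with $\tau_j(\{1,\dots,t-k\})=S_j$, the correspondingly coordinate-permuted cube is again an $n$-dimensional magic cube of weight $N>0$, and under this identification the block $\prod_j S_j$ corresponds to $\{1,\dots,t-k\}^n$. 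Applying Lemma~\ref{lem:zero-block} to this cube in contrapositive form: if $\mu\bigl(\prod_j S_j\bigr)$ were $0$ we would get $\frac{t-k}{t}<\frac{n^2}{1+n^2}$; but $k\le\frac{t}{1+n^2}$ forces $\frac{t-k}{t}\ge\frac{n^2}{1+n^2}$, a contradiction.

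It remains to assemble these pieces. The empty family is vacuously coordinate-disjoint, and the extension step applies whenever $k\le\frac{t}{1+n^2}$, i.e.\ for every integer $k$ with $0\le k\le\floor{\frac{t}{1+n^2}}$; iterating therefore yields a coordinate-disjoint family of positive-weight entries of size $\floor{\frac{t}{1+n^2}}+1$. Since $\floor{x}+1\ge\ceil{x}$ for every real $x$, this is at least $m=\ceil{\frac{t}{1+n^2}}$, so discarding the surplus entries gives $e_1,\dots,e_m$ as desired; extending the coordinatewise partial injections to permutations $\sigma_1,\dots,\sigma_n$ finishes the proof. (The degenerate range $t<1+n^2$, where $m=1$, is handled by the $k=0$ instance of the extension step, which merely records that $\mu(X^n)=tN>0$.)

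I expect the only genuinely delicate point to be matching the stated bound precisely. One must route the argument through Remark~\ref{rem:permutation-invariance} so that Lemma~\ref{lem:zero-block} can be applied to \emph{arbitrary} equal-size coordinate restrictions rather than only to the standard initial blocks $\{1,\dots,k\}^n$; and then the bookkeeping inequality $\floor{x}+1\ge\ceil{x}$, together with the case where $\frac{t}{1+n^2}$ happens to be an integer, is exactly what upgrades the naive count $\floor{\frac{t}{1+n^2}}$ to the claimed $\ceil{\frac{t}{1+n^2}}$. Everything else is routine.
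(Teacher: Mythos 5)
Your proposal is correct and follows essentially the same route as the paper's proof: both construct a coordinate-disjoint family of positive-weight entries and invoke Remark~\ref{rem:permutation-invariance} to reduce to Lemma~\ref{lem:zero-block}, differing only in that you phrase the argument as a greedy extension (``while $k\le\frac{t}{1+n^2}$ one can always extend'') whereas the paper takes $k$ maximal and derives the lower bound $k>\frac{t}{1+n^2}$ from the fact that extension fails, which is the contrapositive of the same step.
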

\begin{proof}
Let $k$ be maximal with the property that we can find $\sigma_1,\ldots,\sigma_n \in \Sym(X)$
such that $(\sigma_1(i),\ldots,\sigma_n(i))$ has positive weight for all $1 \leq i \leq k$.
In view of Remark~\ref{rem:permutation-invariance} we may assume that each $\sigma_j$ is the identity on $X$.
Suppose that there is an element $x = (x_1,\ldots,x_n) \in \{k+1,\ldots,t\}^n$ of positive weight and let $\tau_1,\ldots,\tau_n \in \Sym(\{1,\ldots,t\})$ be the transpositions where $\tau_i$ interchanges $x_i$ with $k+1$.
Then we have
\[
\mu(\tau_1(k+1),\ldots,\tau_n(k+1)) = \mu(x_1,\ldots,x_n) > 0 \text{,}
\]
and for any $1\leq i\leq k$,
\[
\mu(\tau_1(i),\ldots,\tau_n(i)) = \mu(i,\ldots,i) > 0 \text{.}
\]
But this is a contradiction to the maximality of $k$.
Thus, we see that $\mu(\{k+1,\ldots,t\}^n) = 0$.
By a further application of Remark~\ref{rem:permutation-invariance} we can replace $\{k+1,\ldots,t\}^n$ with $\{1,\ldots,t-k\}^n$, so that $\mu(\{1,\ldots,t-k\}^n) = 0$.
In this case Lemma~\ref{lem:zero-block} tells us that $\frac{t-k}{t} < \frac{n^2}{1+n^2}$, from which it can be easily derived that $k > \frac{t}{1+n^2}$.
\end{proof}

\subsection{Independent projections to panels}

We fix a finite, uniformly thick Moufang building $\Delta$ of thickness $t \defeq \thickness(\Delta)$ and a sequence $P_1,\ldots,P_n$ of distinct panels in $\Delta$.
For every $P_i$ let $\{C_{i,1},\ldots,C_{i,t}\}$ be an enumeration of the chambers in $\st(P_i)$.

\smallskip

It can be shown (see, e.g., in~\cite[Proposition 4.95]{AbramenkoBrown08}) that for every simplex $A \subseteq \Delta$ and every chamber $C \subseteq \Delta$ there is a unique chamber $E \subseteq \st_{\Delta}(A)$ such that every minimal gallery from $A$ to $C$ starts with $E$.
The following definition therefore makes sense.

\begin{definition}[Projection]\label{def:projection-to-simplex}
Let $A \subseteq \Delta$ be a simplex and let $C \subseteq \Delta$ be a chamber.
The \emph{projection of $C$ to $A$}, denoted by $\pr_A(C)$, is the unique chamber in $\st_{\Delta}(A)$ such that every minimal gallery from $A$ to $C$ starts with $\pr_A(C)$.
\end{definition}

In order to apply the observations on magic cubes we consider the following.

\begin{definition}\label{def:projection-vector}
Let $X \defeq \{1,\ldots,t\}$.
We consider the map
\[
\Phi \colon \Ch(\Delta) \rightarrow X^n,\ D \mapsto (x_1,\ldots,x_n),
\]
where $x_i$ is given by $\pr_{P_i}(D) = C_{i,x_i}$.
Further we define
\[
\mu \colon \mathcal{P}(X^n) \rightarrow \N_0,\ A \mapsto \abs{\Phi^{-1}(A)}
\]
and call $\mu(A)$ the \emph{weight} of $A$.
\end{definition}

\begin{lemma}\label{lem:distribution-of-preimages}
The map $\mu$ is an $n$-dimensional magic cube of weight $N \defeq \frac{\abs{\Ch(\Delta)}}{t}$ over $X$.
\end{lemma}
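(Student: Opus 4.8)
The plan is to deduce the lemma from Example~\ref{ex:magic-cube}. Put $Y \defeq \Ch(\Delta)$ and $X \defeq \{1,\ldots,t\}$; both are finite and non-empty. For $1 \leq i \leq n$ let $f_i \colon Y \to X$ send a chamber $D$ to the unique index $x_i$ with $\pr_{P_i}(D) = C_{i,x_i}$; this makes sense because uniform thickness guarantees that $\st(P_i)$ has exactly $t$ chambers, so the enumeration $C_{i,1},\ldots,C_{i,t}$ exhausts $\Ch(\st(P_i))$. With this notation the map $\Phi$ of Definition~\ref{def:projection-vector} is exactly the map $D \mapsto (f_1(D),\ldots,f_n(D))$ of Example~\ref{ex:magic-cube}, and $\mu(A) = \abs{\Phi^{-1}(A)}$. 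Hence, by Example~\ref{ex:magic-cube}, it suffices to show that for each $i$ all fibers $f_i^{-1}(x)$, $x \in X$, have the same cardinality; summing these $t$ equal cardinalities over $x$ yields $\abs{\Ch(\Delta)}$, so the common value is forced to be $N = \abs{\Ch(\Delta)}/t$, which matches the claimed weight.

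So the entire content of the lemma reduces to the following statement: for a fixed panel $P$ of $\Delta$ and any two chambers $C, C' \in \Ch(\st(P))$, the fibers $\{D \in \Ch(\Delta) : \pr_P(D) = C\}$ and $\{D \in \Ch(\Delta) : \pr_P(D) = C'\}$ are equinumerous. I would prove this by producing an automorphism $\alpha \in \Aut(\Delta)$ with $\alpha(P) = P$ and $\alpha(C) = C'$; this is where the Moufang hypothesis is used. Choose, using (B1), an apartment $\Sigma$ containing $C$ and $C'$. Since $P$ is a panel, $C$ and $C'$ are the only two chambers of $\Sigma$ having $P$ as a face, hence lie on opposite sides of the wall of $\Sigma$ through $P$; let $R$, $R'$ be the two roots of $\Sigma$ bounded by that wall, so $P \subset \partial R = \partial R'$ and $C$, $C'$ are the base chambers (the unique chamber having $P$ as a face) of $R$, $R'$ respectively. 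The root groups $U_R$, $U_{R'}$ fix $P$ pointwise (root groups fix their root pointwise, and $P \subset \partial R = \partial R'$; cf.~\cite{AbramenkoBrown08}) and, by the Moufang property, act transitively on $\Ch(\st_\Delta(P)) \setminus \{C\}$ and on $\Ch(\st_\Delta(P)) \setminus \{C'\}$ respectively. Since $\Delta$ is thick ($t \geq 3$) there is a chamber $C'' \in \Ch(\st_\Delta(P))$ different from both $C$ and $C'$; pick $\gamma \in U_{R'}$ with $\gamma(C) = C''$ and $\delta \in U_R$ with $\delta(C'') = C'$, and set $\alpha \defeq \delta \circ \gamma$.

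To finish I would invoke the naturality of projections: a type-preserving simplicial automorphism preserves adjacency of chambers, hence gallery distance, hence carries minimal galleries to minimal galleries, so $\pr_{\alpha(A)}(\alpha(D)) = \alpha(\pr_A(D))$ for every simplex $A$ and chamber $D$. Taking $A = P$ and using $\alpha(P) = P$, we see that $\alpha$ maps the fiber over $C$ into the fiber over $C'$; applying the same to $\alpha^{-1}$ (which fixes $P$ and sends $C'$ to $C$) gives the reverse inclusion, so $\alpha$ restricts to a bijection between the two fibers. This establishes equinumerosity, and the lemma follows.

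The step I expect to be the real obstacle is the construction of $\alpha$: one must pass from the bare transitivity statement in the definition of the Moufang property --- where a single root group only moves chambers within the complement of \emph{its own} base chamber and fixes that base chamber --- to an automorphism that fixes the whole panel $P$ while moving $C$ to an arbitrary chamber of $\st(P)$. Choosing the right pair of roots (whose base chambers at $P$ are exactly $C$ and $C'$) and composing two root-group elements through an intermediate chamber $C''$ is the crux. By comparison, the reduction to Example~\ref{ex:magic-cube} is bookkeeping, and the naturality of projections under automorphisms is standard building theory.
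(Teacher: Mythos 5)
Your proof is correct and follows the same route as the paper: reduce to showing that the fibers $\pr_P^{-1}(C)$, $C \in \Ch(\st_\Delta(P))$, are all equinumerous by producing a type-preserving automorphism fixing $P$ and carrying one chamber of $\st_\Delta(P)$ to another, then apply Example~\ref{ex:magic-cube} together with the disjointness of the fibers. The paper simply asserts the existence of such an automorphism as a known consequence of the thick Moufang hypothesis, whereas you construct it explicitly as a composition of two root-group elements passing through an intermediate chamber $C''$ (available by thickness $t \geq 3$); that elaboration is correct and is exactly the detail the paper leaves to the reader.
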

\begin{proof}
Let $P \subset \Delta$ be a panel and let $C_1,C_2$ be chambers in the star of $P$.
Since $\Delta$ is a thick Moufang building, there is a type-preserving automorphism $\alpha \in \Aut(\Delta)$ that fixes $P$ and satisfies $\alpha(C_1) = C_2$.
Let $D_1$ be a chamber in $\pr_P^{-1}(C_1)$ and let $\Gamma = E_1 | \cdots | E_m$ be a minimal gallery from $E_1 = C_1$ to $E_m = D_1$.
Note that by the definition of the projection, $\Gamma$ is a minimal gallery from $P$ to $D_1$.
By applying $\alpha$ to $\Gamma$, we therefore obtain a minimal gallery $\alpha(E_1) | \cdots | \alpha(E_m)$ from $\alpha(P) = P$ to $\alpha(E_m) = \alpha(D_1)$, that starts with $\alpha(E_1) = C_2$.
Thus we have $\pr_P(\alpha(D_1)) = C_2$ which shows that $\alpha(\pr_P^{-1}(C_1)) \subseteq \pr_P^{-1}(\alpha(C_1))$.
An application of $\alpha^{-1}$ reveals that in fact we have the equality $\alpha(\pr_P^{-1}(C_1)) = \pr_P^{-1}(\alpha(C_1))$.
In particular we see that every set of the form $\pr_P^{-1}(C)$, where $P \subset \Delta$ is a panel and $C \subset \st_{\Delta}(P)$ is a chamber, has the same cardinality.
Since the definition of projection ensures that there are no common chambers in $\pr_P^{-1}(C_1)$ and $\pr_P^{-1}(C_2)$ for $C_1 \neq C_2$, we obtain
\[
\abs{\pr_P^{-1}(C)} = \abs{\Ch(\Delta)} \cdot \abs{\Ch(\st_{\Delta}(P))}^{-1} = \abs{\Ch(\Delta)} \cdot t^{-1} = N.
\]
In the situation of the lemma we have $\abs{\pr_{P_i}^{-1}(C_{i,j})} = N$ for all $i,j$.
We can therefore apply Example~\ref{ex:magic-cube} to deduce that $\mu$ is an $n$-dimensional magic cube of weight $N$ over $X$.
\end{proof}

We can now apply Corollary~\ref{cor:zero-block} to the situation in Lemma~\ref{lem:distribution-of-preimages}.

\begin{proposition}\label{prop:independent-proj-panels}
Let $\Delta$ be a finite, uniformly thick Moufang building of thickness $t \defeq \thickness(\Delta)$ and let $P_1,\ldots,P_n$ be a sequence of distinct panels in $\Delta$.
Then we can find at least $m \defeq \ceil{\frac{t}{1+n^2}}$
chambers $D_1,\ldots,D_m$, such that
\[
\pr_{P_i}(D_{j_1}) \neq \pr_{P_i}(D_{j_2})
\]
for all $1 \leq j_1 \neq j_2 \leq m$ and $1 \leq i \leq n$.
\end{proposition}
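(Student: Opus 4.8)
The plan is to apply Corollary~\ref{cor:zero-block} directly to the magic cube produced in Lemma~\ref{lem:distribution-of-preimages}, and then translate the conclusion about ``positive weight'' back into a statement about chambers of $\Delta$. First I would invoke Lemma~\ref{lem:distribution-of-preimages} to obtain that
\[
\mu \colon \mathcal{P}(X^n) \to \N_0, \quad \mu(A) = \abs{\Phi^{-1}(A)},
\]
is an $n$-dimensional magic cube over $X = \{1,\ldots,t\}$ of weight $N = \abs{\Ch(\Delta)}/t$. Since $\Ch(\Delta) \neq \emptyset$, this weight is strictly positive, so Corollary~\ref{cor:zero-block} applies and yields permutations $\sigma_1,\ldots,\sigma_n \in \Sym(X)$ such that the tuple $(\sigma_1(j),\ldots,\sigma_n(j)) \in X^n$ has positive $\mu$-weight for every $1 \leq j \leq m \defeq \ceil{\frac{t}{1+n^2}}$. (Note that the running index ``$i$'' of Corollary~\ref{cor:zero-block} plays the role of the chamber index $j$ here, not the panel index $i$ of the proposition.)

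Next I would unwind the definition of $\mu$. For each $j$ with $1 \leq j \leq m$, the inequality $\mu(\sigma_1(j),\ldots,\sigma_n(j)) > 0$ means exactly that $\Phi^{-1}(\sigma_1(j),\ldots,\sigma_n(j))$ is nonempty, so I can choose a chamber $D_j \in \Ch(\Delta)$ with $\Phi(D_j) = (\sigma_1(j),\ldots,\sigma_n(j))$. By the definition of $\Phi$ in Definition~\ref{def:projection-vector}, this says precisely that $\pr_{P_i}(D_j) = C_{i,\sigma_i(j)}$ for every $1 \leq i \leq n$. Carrying this out for each $j$ from $1$ to $m$ produces the required list $D_1,\ldots,D_m$ of chambers, and there are $m = \ceil{\frac{t}{1+n^2}}$ of them as claimed.

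Finally I would verify the separation property. Fix $1 \leq j_1 \neq j_2 \leq m$ and $1 \leq i \leq n$. Then $\pr_{P_i}(D_{j_1}) = C_{i,\sigma_i(j_1)}$ and $\pr_{P_i}(D_{j_2}) = C_{i,\sigma_i(j_2)}$. Since $\sigma_i$ is a bijection of $\{1,\ldots,t\}$ and $j_1 \neq j_2$, we have $\sigma_i(j_1) \neq \sigma_i(j_2)$; and since $C_{i,1},\ldots,C_{i,t}$ is an enumeration of the $t = \thickness(\Delta)$ \emph{distinct} chambers of $\st(P_i)$ (this is where uniform thickness is used, to know that $\st(P_i)$ has exactly $t$ chambers so that the enumeration is injective), it follows that $C_{i,\sigma_i(j_1)} \neq C_{i,\sigma_i(j_2)}$, i.e.\ $\pr_{P_i}(D_{j_1}) \neq \pr_{P_i}(D_{j_2})$.

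I do not expect a genuine obstacle at this stage: all of the real content has already been packaged into Lemma~\ref{lem:zero-block}/Corollary~\ref{cor:zero-block} (the combinatorial estimate bounding the side length of a block of $0$-entries in a magic cube) and into Lemma~\ref{lem:distribution-of-preimages} (which uses the thick Moufang hypothesis to transport preimages $\pr_P^{-1}(C_1)$ to $\pr_P^{-1}(C_2)$ by a type-preserving automorphism fixing $P$, hence to conclude that all fibers have the common cardinality $N$). The only points requiring care in the write-up are that the weight $N$ is strictly positive, so that Corollary~\ref{cor:zero-block} is applicable, and the bookkeeping translation between ``a tuple in $X^n$ has positive $\mu$-weight'' and ``there exists a chamber of $\Delta$ realizing that tuple of projection images,'' together with keeping the panel index and the chamber index notationally distinct.
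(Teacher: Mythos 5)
Your proof is correct and follows essentially the same route as the paper's own proof: invoke Lemma~\ref{lem:distribution-of-preimages} to build the magic cube, apply Corollary~\ref{cor:zero-block} to get permutations producing positive-weight tuples, and then unwind $\Phi$ to extract chambers $D_1,\ldots,D_m$ with the required distinct projections. The only additions are small but sensible bookkeeping remarks (checking $N>0$, noting where uniform thickness enters) that the paper leaves implicit.
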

\begin{proof}
Let $\{C_{i,1},\ldots,C_{i,t}\}$ be an enumeration of the chambers in $\st(P_i)$ and let $\mu \colon \mathcal{P}(X^n) \rightarrow \N_0$ be the corresponding magic cube from Lemma~\ref{lem:distribution-of-preimages}.
From Corollary~\ref{cor:zero-block} we know that there are permutations $\sigma_i \in \Sym(X)$ for $1 \leq i \leq n$ such that $(\sigma_1(i),\ldots,\sigma_n(i))$ has positive weight for all $1 \leq i \leq m$.
This means that we can find chambers $D_1,\ldots,D_m \in \Ch(\Delta)$, such that
\[
(\pr_{P_1}(D_j),\ldots,\pr_{P_n}(D_j)) = (C_{1,\sigma_1(j)},\ldots,C_{n,\sigma_n(j)})
\]
for every $1 \leq j \leq m$.
Since each $\sigma_i$ is a permutation, we obtain
\[
\pr_{P_i}(D_{j_1}) = C_{i,\sigma_i(j_1)} \neq C_{i,\sigma_i(j_2)} = \pr_{P_i}(D_{j_2})
\]
for all $1 \leq j_1 \neq j_2 \leq m$ and $1 \leq i \leq n$.
\end{proof}

\subsection{Intersections of convex hulls}

For the rest of this section we fix a finite, $d$-dimensional, uniformly thick, Moufang building $\Delta$ and a finite set
\[
\mathcal{E} \defeq \{E_1,\ldots,E_n\}
\]
of chambers in $\Delta$.
Let $t \defeq \thickness(\Delta)$ and let $S \defeq \{s_0,\ldots,s_{d}\}$ denote the set of types of panels in $\Delta$.
For every chamber $E \subseteq \Delta$ and every $0 \leq i \leq d$ let $P_{E,i}$ denote the panel of type $s_i$ in $E$.

\begin{lemma}\label{lem:independent-proj-chambers}
There are at least $m \defeq \ceil{\frac{t}{1+(d+1)^2n^2}}$ chambers $D_1,\ldots,D_m \subset \Delta$, such that
\[
\pr_{P_{E_j,i}}(D_k) \neq \pr_{P_{E_j,i}}(D_l)
\]
for all $0 \leq i \leq d$, $1 \leq j \leq n$ and $1 \leq k \neq l \leq m$.
\end{lemma}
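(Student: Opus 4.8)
The plan is to deduce this directly from Proposition~\ref{prop:independent-proj-panels}; the only new input is the bookkeeping observation that the family of panels occurring in the statement has cardinality at most $(d+1)n$, so that one can afford the corresponding degradation of the quantitative bound.

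First I would list the panels $P_{E_j,i}$ for $0 \leq i \leq d$ and $1 \leq j \leq n$ and delete repetitions, obtaining a repetition-free sequence $P_1,\ldots,P_N$ of panels of $\Delta$. For a fixed $j$ the panels $P_{E_j,0},\ldots,P_{E_j,d}$ have the $d+1$ distinct types $s_0,\ldots,s_d$ and hence are pairwise distinct, so letting $j$ range over the $n$ chambers of $\mathcal{E}$ we get $N \leq (d+1)n$ (repetitions occur precisely when two of the $E_j$ share a panel).

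Next I would apply Proposition~\ref{prop:independent-proj-panels} to $\Delta$ --- which is finite, uniformly thick, and Moufang by the standing hypotheses of this subsection --- and to the distinct panels $P_1,\ldots,P_N$. Writing $m_N \defeq \ceil{\frac{t}{1+N^2}}$, this produces chambers $D_1,\ldots,D_{m_N}$ such that $\pr_{P_i}(D_k) \neq \pr_{P_i}(D_l)$ for all $1 \leq k \neq l \leq m_N$ and all $1 \leq i \leq N$. Since $N \leq (d+1)n$ gives $1+N^2 \leq 1+(d+1)^2n^2$, hence $m_N \geq \ceil{\frac{t}{1+(d+1)^2n^2}} = m$, I may keep only the first $m$ of these chambers and rename them $D_1,\ldots,D_m$. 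Finally, for any $0 \leq i \leq d$ and $1 \leq j \leq n$ the panel $P_{E_j,i}$ equals $P_{i'}$ for some $1 \leq i' \leq N$, so $\pr_{P_{E_j,i}}(D_k) = \pr_{P_{i'}}(D_k) \neq \pr_{P_{i'}}(D_l) = \pr_{P_{E_j,i}}(D_l)$ for all $1 \leq k \neq l \leq m$, which is the assertion.

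There is no genuine obstacle here: all of the content sits in Proposition~\ref{prop:independent-proj-panels} (and ultimately in the magic-cube estimate of Lemma~\ref{lem:zero-block}), and the present lemma is just the routine step of applying it to the at most $(d+1)n$ panels $P_{E_j,i}$ while tracking the bound. The single point that deserves a word of care is that Proposition~\ref{prop:independent-proj-panels} is formulated for a sequence of \emph{distinct} panels, which is why deduplicating first --- rather than feeding it the length-$(d+1)n$ list of the $P_{E_j,i}$, which may contain repeats --- is the clean way to proceed; and one should of course use the monotonicity of $x \mapsto \ceil{t/(1+x^2)}$ in the correct direction when passing from $N$ to $(d+1)n$.
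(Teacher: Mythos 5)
Your proof is correct and follows exactly the same route as the paper's: the paper's proof likewise observes that the set of panels $\{P_{E_j,i}\}$ has cardinality at most $(d+1)n$ and then invokes Proposition~\ref{prop:independent-proj-panels}. You have simply spelled out the deduplication step and the monotonicity of $\ceil{t/(1+x^2)}$ that the paper leaves implicit.
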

\begin{proof}
Consider the set $\Set{P_{E_j,i}}{0 \leq i \leq d, 1 \leq j \leq n}$ of all panels incident to some chamber $E_j$.
Since the cardinality of this set is bounded above by $(d+1)n$, the lemma directly follows from Proposition~\ref{prop:independent-proj-panels}.
\end{proof}

In view of Lemma~\ref{lem:independent-proj-chambers} we can now fix a subset
\[
\mathcal{D} \subseteq \Ch(\Delta)
\]
of cardinality $m \defeq \ceil{\frac{t}{1+(d+1)^2n^2}}$ that satisfies $\pr_{P_{E_j,i}}(D) \neq \pr_{P_{E_j,i}}(D')$ for all $0 \leq i \leq d$, $1 \leq j \leq n$ and all distinct $D,D' \in \mathcal{D}$.

\begin{lemma}\label{lem:almost_disjoint_hulls_1}
Let $C,D_1,D_2$ be chambers in $\Delta$ with $\pr_{P_{C,i}}(D_1) \neq \pr_{P_{C,i}}(D_2)$ for every $0 \leq i \leq d$.
Then $\conv(C,D_1) \cap \conv(C,D_2) = C$.
\end{lemma}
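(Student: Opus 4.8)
The plan is to reduce everything to a statement about chambers and minimal galleries by invoking Lemma~\ref{lem:conv-hull-of-two-chambers}, which identifies $\conv(C,D_j)$ with the union of all chambers lying on a minimal gallery from $C$ to $D_j$. Since every face of $C$ lies in both hulls, only the reverse inclusion is at issue, and the whole argument rests on one elementary observation about projections to panels: if $E'$ is a chamber with $d_{\Delta}(C,E')=1$ — so that $P\defeq C\cap E'$ is a panel — and $E'$ lies on a minimal gallery from $C$ to a chamber $D$, then $E'=\pr_{P}(D)$. Indeed, on such a gallery one has $d_{\Delta}(E',D)=d_{\Delta}(C,D)-1\le d_{\Delta}(P,D)$, using $C\supseteq P$; on the other hand, any chamber containing $P$ that is not $\pr_{P}(D)$ is at gallery distance at least $d_{\Delta}(P,D)+1$ from $D$ (otherwise a minimal gallery from it to $D$ would be a minimal gallery from $P$ to $D$ not starting with $\pr_{P}(D)$), so $E'$ cannot be such a chamber.

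With this observation in hand I would first prove the chamber-level statement that $\Ch(\conv(C,D_1))\cap\Ch(\conv(C,D_2))=\{C\}$. Suppose some chamber $E\ne C$ lies in both hulls. Fix a minimal gallery $C=E_1\mid E_2\mid\cdots\mid E$ from $C$ to $D_1$ passing through $E$ (it exists by Lemma~\ref{lem:conv-hull-of-two-chambers}, and has length at least $1$), and let $s_i$ be the type of the panel $P\defeq E_1\cap E_2=P_{C,i}$. Applying the observation to $E_2$ on this gallery gives $E_2=\pr_{P_{C,i}}(D_1)$. Next, the prefix $C\mid E_2\mid\cdots\mid E$ is a minimal gallery from $C$ to $E$, so splicing it with a minimal gallery from $E$ to $D_2$ and using $d_{\Delta}(C,D_2)=d_{\Delta}(C,E)+d_{\Delta}(E,D_2)$ (valid because $E\in\conv(C,D_2)$) produces a minimal gallery from $C$ to $D_2$ through $E_2$. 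Applying the observation once more, now to $E_2$ on this new gallery, gives $E_2=\pr_{P_{C,i}}(D_2)$. Hence $\pr_{P_{C,i}}(D_1)=\pr_{P_{C,i}}(D_2)$, contradicting the hypothesis at the index $i$.

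Finally I would promote this to the stated equality of subcomplexes. Let $\tau$ be any simplex of $\conv(C,D_1)\cap\conv(C,D_2)$. Each $\conv(C,D_j)$ is a convex subcomplex that is a union of chambers and contains $C$ and $\tau$, so one may choose a chamber $D\in\Ch(\conv(C,D_j))$ with $\tau$ a face of $D$; by the gate property of the residue $\st_{\Delta}(\tau)$ (see, e.g.,~\cite[Proposition~4.95]{AbramenkoBrown08}) the chamber $\pr_{\tau}(C)$ lies on a minimal gallery from $D$ to $C$, hence in $\conv(C,D_j)$ by convexity. Thus $\pr_{\tau}(C)\in\Ch(\conv(C,D_1))\cap\Ch(\conv(C,D_2))=\{C\}$, so $\pr_{\tau}(C)=C$, which forces $\tau$ to be a face of $C$; conversely every face of $C$ lies in both hulls, so $\conv(C,D_1)\cap\conv(C,D_2)=C$. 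I expect the only real care needed is in the elementary observation of the first paragraph — pinning down the inequalities $d_{\Delta}(C,D)-1\le d_{\Delta}(P,D)$ and $d_{\Delta}(E',D)\ge d_{\Delta}(P,D)+1$ and citing the correct uniqueness property of projections onto panels; the rest is formal once Lemma~\ref{lem:conv-hull-of-two-chambers} and the gate property are available.
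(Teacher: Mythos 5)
Your proof is correct and follows the same two-step architecture as the paper: first rule out a second common chamber by producing a minimal gallery from $C$ to $D_1$ and a minimal gallery from $C$ to $D_2$ sharing the same chamber $E_2$ adjacent to $C$, which forces $\pr_{P_{C,i}}(D_1)=E_2=\pr_{P_{C,i}}(D_2)$ for the type $s_i$ of the shared panel; then promote the chamber-level statement to an equality of subcomplexes by showing that for any cell $\tau$ in the intersection, the chamber $\pr_\tau(C)$ lies in both hulls and hence equals $C$, so $\tau\subseteq C$. The paper's version of the first step reverses the galleries (running from $D_j$ to $C$) and directly replaces the terminal subgallery of $\Gamma_2$ by that of $\Gamma_1$, but this is the same splice you perform, and your preliminary observation identifying an adjacent chamber on a minimal gallery with the relevant panel projection is exactly what makes the paper's final line ``$\pr_P(D_1)=F_k=\pr_P(D_2)$'' work.

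The one place where you genuinely diverge is in the second step. The paper chooses apartments $\Sigma_1\supseteq\{C,D_1\}$ and $\Sigma_2\supseteq\{C,D_2\}$, notes $\conv(C,D_j)\subseteq\Sigma_j$ by convexity of apartments, and then invokes a Coxeter-complex result from Abramenko--Brown to conclude $\pr_A(C)\in\conv(C,D_j)$. You instead pick a chamber $D^{(j)}\in\Ch(\conv(C,D_j))$ containing $\tau$, use the gate property of the residue $\st_\Delta(\tau)$ to place $\pr_\tau(C)$ on a minimal gallery from $D^{(j)}$ to $C$, and then appeal to convexity of $\conv(C,D_j)$. This avoids apartments entirely and is arguably more self-contained; the two routes are otherwise equivalent and lead to the same contradiction. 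No gaps.
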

\begin{proof}
Suppose first that there is a chamber $E \subseteq \conv(C,D_1) \cap \conv(C,D_2)$ with $E \neq C$.
In this case we know from Lemma~\ref{lem:conv-hull-of-two-chambers} that there are minimal galleries $\Gamma_1,\Gamma_2$ from $D_1$, respectively $D_2$, to $C$ that contain $E$.
Let
\[
\Gamma_1' = E\ \vert\ F_1\ \vert\ \cdots\ \vert\ F_k\ \vert\ C \text{ and } \Gamma_2' = E\ \vert\ G_1\ \vert\ \cdots\ \vert\ G_k\ \vert\ C
\]
be the subgalleries of $\Gamma_1$ and $\Gamma_2$ that start at $E$ and let $P$ the the panel between $F_k$ and $C$.
By replacing $\Gamma_2'$ with $\Gamma_1'$ in $\Gamma_2$ we get a minimal gallery of the form $\Gamma_2'' = D_2\ \vert\ \cdots\ \vert\ F_k\ \vert\ C$.
From this it follows that $\pr_P(D_1) = F_k = \pr_P(D_2)$, which contradicts our assumptions.
Thus, $C$ is the only chamber in $\conv(C,D_1) \cap \conv(C,D_2)$.

Now consider a point $x \in \conv(C,D_1) \cap \conv(C,D_2)$ and let $A \subseteq \conv(C,D_1) \cap \conv(C,D_2)$ be a cell containing $x$.
By (B1) there are apartments $\Sigma_1,\Sigma_2 \subseteq \Delta$ containing $C$ and $D_1$, respectively $C$ and $D_2$.
Since every apartment in a building is convex (see, e.g.,~\cite[4.40]{AbramenkoBrown08}), we have $\conv(C,D_1) \subseteq \Sigma_1$ and $\conv(C,D_2) \subseteq \Sigma_2$.
In this case~\cite[3.131]{AbramenkoBrown08} tells us that the chamber $\pr_A(C)$ lies in $\conv(C,D_1)$ and $\conv(C,D_2)$.
As observed above, this implies $\pr_A(C) = C$.
On the other hand, we have $x \in A \subseteq \pr_A(C)$, which proves the claim.
\end{proof}

\begin{corollary}\label{cor:independent-proj-chambers}
For every two distinct $D,D' \in \mathcal{D}$ we have
\[
\conv(E_j,D) \cap \conv(E_j,D') = E_j
\]
for every $1 \leq j \leq n$.
\end{corollary}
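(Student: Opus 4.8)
The plan is to deduce this directly from Lemma~\ref{lem:almost_disjoint_hulls_1}, since the set $\mathcal{D}$ was constructed precisely so that its elements project differently onto the relevant panels. First I would fix an index $1 \leq j \leq n$ and two distinct chambers $D, D' \in \mathcal{D}$, and put $C \defeq E_j$. By definition of $\mathcal{D}$ (which exists by Lemma~\ref{lem:independent-proj-chambers}), we have $\pr_{P_{E_j,i}}(D) \neq \pr_{P_{E_j,i}}(D')$ for every $0 \leq i \leq d$. Since the panel $P_{C,i}$ of type $s_i$ in $C = E_j$ is exactly $P_{E_j,i}$, the triple $(C, D, D')$ satisfies the hypothesis of Lemma~\ref{lem:almost_disjoint_hulls_1} with $D_1 \defeq D$ and $D_2 \defeq D'$. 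That lemma then yields $\conv(E_j, D) \cap \conv(E_j, D') = E_j$ at once, which is the asserted equality.

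Since $j$ and the pair of distinct chambers $D, D' \in \mathcal{D}$ were arbitrary, this establishes the corollary in full generality. I do not expect any real obstacle here: the statement is a formal repackaging of Lemma~\ref{lem:almost_disjoint_hulls_1}, whose proof already contains the substantive argument (the gallery-surgery step showing that a common chamber $E \neq C$ in $\conv(C,D_1) \cap \conv(C,D_2)$ would force $\pr_P(D_1) = \pr_P(D_2)$ for the panel $P$ separating the last gallery chamber from $C$, together with the projection lemma~\cite[3.131]{AbramenkoBrown08} to pass from points to chambers). The only thing worth spelling out is the bookkeeping: $\mathcal{D}$ has the desired property because Lemma~\ref{lem:independent-proj-chambers} was applied to the (at most $(d+1)n$) panels incident to $E_1, \ldots, E_n$, itself a consequence of Proposition~\ref{prop:independent-proj-panels} and the magic-cube estimate of Corollary~\ref{cor:zero-block}; but none of that needs to be reproved, only invoked.
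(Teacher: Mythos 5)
Your proof is correct and is exactly the paper's argument: the paper's proof of Corollary~\ref{cor:independent-proj-chambers} simply says it is an immediate consequence of Lemmas~\ref{lem:independent-proj-chambers} and~\ref{lem:almost_disjoint_hulls_1}, and you have spelled out the bookkeeping (that $\mathcal{D}$ was fixed to have the projection property, and that $P_{E_j,i}$ plays the role of $P_{C,i}$) that makes that deduction immediate.
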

\begin{proof}
This is an immediate consequence of Lemmas~\ref{lem:independent-proj-chambers} and~\ref{lem:almost_disjoint_hulls_1}.
\end{proof}

Recall from Lemma~\ref{lem:polynomial-bounded} that the following definition makes sense.

\begin{definition}\label{def:max-vertices-apartment}
For each $d \in \N$ let $c_d \in \N$ denote the maximal number of cells that can be contained in an apartment of a $d$-dimensional finite thick building.
\end{definition}

\begin{lemma}\label{lem:disjoint-complexes-1}
Let $l \in \N$ and suppose that the thickness $t$ of $\Delta$ satisfies
\[
t > l \cdot (n^2 \cdot c_d)(1+(d+1)^2n^2).
\]
Then there are $l$ chambers $D_1,\ldots,D_l \in \mathcal{D}$ such that
\begin{equation}\label{eq:disjoint-complexes-1-1}
\left(\bigcup \limits_{i=1}^n \conv(E_i,D_{j_1})\right) \cap \conv(E_k,D_{j_2}) = E_k
\end{equation}
for all $1 \leq k \leq n$ and $1 \leq j_1 < j_2 \leq l$.
\end{lemma}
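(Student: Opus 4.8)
The plan is to build $D_1,\dots,D_l$ by a greedy selection inside $\mathcal D$, feeding on two facts: Corollary~\ref{cor:independent-proj-chambers} handles the ``diagonal'' intersections, and a rigidity property of $\mathcal D$ coming from Lemma~\ref{lem:independent-proj-chambers} controls the ``off-diagonal'' ones.

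\emph{Reduction.} First I would reduce \eqref{eq:disjoint-complexes-1-1} to finding pairwise distinct $D_1,\dots,D_l\in\mathcal D$ with $\conv(E_i,D_{j_1})\cap\conv(E_k,D_{j_2})\subseteq E_k$ for all $1\le j_1<j_2\le l$ and all $i\ne k$. Granting this, the instance $i=k$ is supplied by Corollary~\ref{cor:independent-proj-chambers}, which gives $\conv(E_k,D_{j_1})\cap\conv(E_k,D_{j_2})=E_k$; taking the union over $i$ then yields $\bigl(\bigcup_{i=1}^n\conv(E_i,D_{j_1})\bigr)\cap\conv(E_k,D_{j_2})\subseteq E_k$, while the reverse inclusion is automatic since $E_k\subseteq\conv(E_k,D_{j_1})\cap\conv(E_k,D_{j_2})$. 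So it remains to arrange the off-diagonal inclusions.

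\emph{Rigidity.} The crucial observation, proved by the argument of Lemma~\ref{lem:almost_disjoint_hulls_1}, is this: given a chamber $E$, a convex hull $X$ of two chambers, and a chamber $D'$ with $X\cap\conv(E,D')\not\subseteq E$, there is a cell $A\subseteq X$ with $A\not\subseteq E$ such that $G\defeq\pr_A(E)$ lies in $\conv(E,D')$ and $G\ne E$ (it is $\ne E$ since $A$ is a face of $G$ but not of $E$, and it lies in $\conv(E,D')$ by \cite[3.131]{AbramenkoBrown08}, as $A$ is a cell of $\conv(E,D')$). Choosing a minimal gallery from $E$ to $G$ and letting $P$ be the panel of $E$ it crosses first, this gallery extends through $G$ to a minimal gallery from $E$ to $D'$ (because $d_\Delta(E,D')=d_\Delta(E,G)+d_\Delta(G,D')$ by Lemma~\ref{lem:conv-hull-of-two-chambers}), whence $\pr_P(D')=\pr_P(G)$. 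The point is the following: \emph{for a fixed chamber $G\ne E$ there is at most one $D'\in\mathcal D$ with $G\in\conv(E,D')$}, because any two such chambers would --- applying the displayed equality with the same panel $P$ of $E$ --- agree on the projection to $P$, which is impossible for distinct members of $\mathcal D$ by Lemma~\ref{lem:independent-proj-chambers}, since $P$ is a panel of $E$.

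\emph{Selection.} Call $D'\in\mathcal D$ \emph{blocked by $D\in\mathcal D$} if $\conv(E_i,D)\cap\conv(E_k,D')\not\subseteq E_k$ for some $i\ne k$. By the rigidity property applied with $E=E_k$ and $X=\conv(E_i,D)$, such a $D'$ is determined by an ordered pair $i\ne k$ together with a cell $A$ of $\conv(E_i,D)$, of which there are at most $c_d$ by Lemma~\ref{lem:conv-hull-of-two-chambers} and Definition~\ref{def:max-vertices-apartment}; hence each $D$ blocks at most $n(n-1)c_d$ chambers of $\mathcal D$. Now pick $D_1,\dots,D_l$ inductively: with distinct $D_1,\dots,D_r$ already chosen so that the off-diagonal inclusions hold among them, choose $D_{r+1}\in\mathcal D$ outside $\{D_1,\dots,D_r\}$ and not blocked by any $D_j$ with $j\le r$. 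This is possible because the number of chambers to avoid is at most $r\,(1+n(n-1)c_d)\le(l-1)(1+n(n-1)c_d)$, which is strictly smaller than $\abs{\mathcal D}=\ceil{t/(1+(d+1)^2n^2)}$: the hypothesis $t>l\cdot(n^2c_d)(1+(d+1)^2n^2)$ forces $\abs{\mathcal D}\ge l\,n^2c_d+1$, and an elementary estimate gives $l\,n^2c_d\ge(l-1)(1+n(n-1)c_d)$ when $n\ge2$ (for $n=1$ nothing is blocked and the claim is immediate). The resulting $D_1,\dots,D_l$ satisfy the inclusions of the Reduction step, hence \eqref{eq:disjoint-complexes-1-1}.

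I expect the Rigidity step to be the main obstacle: one must carefully pass from the failure of the containment of subcomplexes to a concrete chamber $G=\pr_A(E)$ sitting inside $\conv(E,D')$ and then isolate the single panel of $E$ whose projection pins $D'$ down among the elements of $\mathcal D$; this is carried out exactly as in the proof of Lemma~\ref{lem:almost_disjoint_hulls_1}, now localized at the panels of the chambers $E_k$ rather than of a single chamber. The numerology in the Selection step is then routine.
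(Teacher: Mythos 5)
Your proof is correct and in substance follows the same strategy as the paper's: bound the number of ``bad'' chambers created by each already-chosen $D_j$ by counting cells of convex hulls (roughly $n^2 c_d$ per $D_j$), then select the $D_j$ one at a time from $\mathcal D$, which is large enough by the hypothesis on $t$. The one place you take an unnecessarily long route is the Rigidity step: the uniqueness you need (a cell $A\not\subseteq E_k$ lies in $\conv(E_k,D')$ for at most one $D'\in\mathcal D$) is an immediate consequence of Corollary~\ref{cor:independent-proj-chambers}, since $\conv(E_k,D')\cap\conv(E_k,D'')=E_k$ for distinct $D',D''\in\mathcal D$ directly forbids a common cell outside $E_k$; re-deriving this via $G=\pr_A(E_k)$, a choice of minimal gallery, and the panel-projection property of $\mathcal D$ is sound but redundant (it essentially re-proves a localized version of Lemma~\ref{lem:almost_disjoint_hulls_1}, which is already packaged into Corollary~\ref{cor:independent-proj-chambers}). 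The paper invokes exactly that corollary (as equation~\eqref{eq:intersection1}) and phrases the bound as a vertex count over $\bigcup_i\conv(E_i,D_1)$ rather than a cell count per ordered pair $(i,k)$, but these are cosmetic differences; your explicit reduction to the off-diagonal case and the greedy selection play the same role as the paper's iterative removal of the sets $\mathcal K_i$. The numerology in your Selection step is fine --- $(l-1)(1+n(n-1)c_d)\le ln^2c_d$ holds for all $n\ge1$ since $nc_d\ge1$, so the $n=1$ caveat isn't even needed.
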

\begin{proof}
From Corollary~\ref{cor:independent-proj-chambers} we know that $\mathcal{D}_1 \defeq \mathcal{D}$ consists of $m = \ceil{\frac{t}{1+(d+1)^2n^2}}$ chambers such that
\begin{equation}\label{eq:intersection1}
\conv(E_j,D) \cap \conv(E_j,D') = E_j
\end{equation}
for every $1 \leq j \leq n$ and all distinct $D,D' \in \mathcal{D}_1$.
Let $D_1 \in \mathcal{D}_1$ be a fixed chamber.
We want to give an upper bound on the number of chambers $D \in \mathcal{D}_1$ that do not satisfy
\begin{equation}\label{eq:intersection2}
\left(\bigcup \limits_{i=1}^n \conv(E_i,D_1)\right) \cap \conv(E_j,D) = E_j
\end{equation}
for some given $1 \leq j \leq n$.
In view of~\eqref{eq:intersection1} we see that the number of such chambers $D$ is bounded by the number of vertices in $\bigcup \limits_{i=1}^n \conv(E_i,D_1)$.
Since the convex hull of two arbitrary chambers in $\Delta$ is contained in some apartment, the number of vertices in $\bigcup \limits_{i=1}^n \conv(E_i,D_1)$ is bounded above by $n \cdot c_d$.
Hence, the cardinality of the subset $\mathcal{K}_1 \subseteq \mathcal{D}_1$ that consists of chambers $D$ that do not satisfy~\eqref{eq:intersection2} for at least one $j$ is bounded above by $n^2 \cdot c_d$.
Let us now fix a chamber $D_2 \in \mathcal{D}_2 \defeq \mathcal{D}_1 \setminus (\mathcal{K}_1 \cup \{D_1\})$.
With the same argument as before we see that the subset $\mathcal{K}_2 \subseteq \mathcal{D}_2$ of chambers $D$ that do not satisfy
\begin{equation}\label{eq:intersection3}
\left(\bigcup \limits_{i=1}^n \conv(E_i,D_1) \cup \bigcup \limits_{i=1}^n \conv(E_i,D_2)\right) \cap \conv(E_j,D) = E_j
\end{equation}
for at least one $j$ consists of at most $n^2 \cdot c_d$ chambers.

Inductively, this procedure provides us with a sequence of subsets $\mathcal{D}_{i+1} \defeq \mathcal{D}_i \setminus (\mathcal{K}_i \cup \{D_i\})$ and chambers $D_i \in \mathcal{D}_i$ that satisfy the statement of the lemma, as long as $\mathcal{D}_i$ is large enough.
As each of the sets $\mathcal{K}_i$ has at most $n^2 \cdot c_d$ elements, this can be guaranteed if
\[
\abs{\mathcal{D}_{i+1}} \geq \abs{\mathcal{D}_1} - i(n^2 \cdot c_d-1) \geq m - i(n^2 \cdot c_d-1) > 0.
\]
From our assumption on $t$ we obtain
\[
m \geq \frac{t}{1+(d+1)^2n^2}
> \frac{l \cdot (n^2 \cdot c_d)(1+(d+1)^2n^2)}{1+(d+1)^2n^2}
> (l-1)(n^2 \cdot c_d-1).
\]
Thus we see that we can choose $l$ chambers $D_1,\ldots,D_l$ with the desired properties.
\end{proof}

As a direct consequence we obtain the following.

\begin{corollary}\label{cor:disjoint-complexes-2}
Suppose that the thickness $t$ of $\Delta$ satisfies
\[
t > l \cdot (n^2 \cdot c_d)(1+(d+1)^2n^2)
\]
for some $l \in \N$.
Then there are $l$ chambers $D_1,\ldots,D_l \in \mathcal{D}$ such that
\[
\left(\bigcup \limits_{i=1}^n \conv(E_i,D_{j_1})\right) \cap \left(\bigcup \limits_{i=1}^n \conv(E_k,D_{j_2})\right) = \bigcup \limits_{k=1}^n E_k
\]
for all $1 \leq j_1 < j_2 \leq l$.
\end{corollary}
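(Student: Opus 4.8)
The plan is to obtain this as a purely formal consequence of Lemma~\ref{lem:disjoint-complexes-1}, with no new geometric input. First I would apply that lemma with the given hypothesis $t > l\cdot(n^2\cdot c_d)(1+(d+1)^2 n^2)$ to produce chambers $D_1,\ldots,D_l \in \mathcal{D}$ for which~\eqref{eq:disjoint-complexes-1-1} holds, i.e.
\[
\left(\bigcup_{i=1}^n \conv(E_i,D_{j_1})\right) \cap \conv(E_k,D_{j_2}) = E_k
\]
for all $1 \leq k \leq n$ and all $1 \leq j_1 < j_2 \leq l$.

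Next, fixing a pair of indices $j_1 < j_2$, I would take the union over $k = 1,\ldots,n$ of both sides of this identity. The right-hand side becomes $\bigcup_{k=1}^n E_k$. On the left-hand side, distributivity of intersection over union rewrites $\bigcup_{k=1}^n\bigl[\left(\bigcup_{i=1}^n \conv(E_i,D_{j_1})\right) \cap \conv(E_k,D_{j_2})\bigr]$ as $\left(\bigcup_{i=1}^n \conv(E_i,D_{j_1})\right) \cap \left(\bigcup_{k=1}^n \conv(E_k,D_{j_2})\right)$, which is precisely the left-hand side of the asserted equality. Hence the two sides agree. For completeness one can also note the trivial inclusion $\bigcup_{k=1}^n E_k \subseteq \left(\bigcup_{i=1}^n \conv(E_i,D_{j_1})\right) \cap \left(\bigcup_{k=1}^n \conv(E_k,D_{j_2})\right)$, since every $E_k$ lies in both $\conv(E_k,D_{j_1})$ and $\conv(E_k,D_{j_2})$; but this is already subsumed by the computation above.

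I do not anticipate any genuine obstacle here, since all of the substantive work is carried out in Lemma~\ref{lem:disjoint-complexes-1} (and, underneath it, in Corollary~\ref{cor:independent-proj-chambers} and Lemma~\ref{lem:almost_disjoint_hulls_1}); the corollary merely reorganizes unions and intersections. The one bit of care required is index bookkeeping: Lemma~\ref{lem:disjoint-complexes-1} places the ``large'' union of convex hulls on the chamber with the smaller index $j_1$, which matches the order in which the intersection is written in the statement, so no symmetrization in $j_1,j_2$ is needed.
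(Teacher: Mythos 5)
Your proposal is correct and takes essentially the same route as the paper: apply Lemma~\ref{lem:disjoint-complexes-1} to produce the chambers $D_1,\ldots,D_l$ and then distribute the intersection over the union $\bigcup_{k=1}^n \conv(E_k,D_{j_2})$, reducing the claim to the identity~\eqref{eq:disjoint-complexes-1-1}. Your closing remark about the index $j_1$ carrying the ``large'' union is a nice sanity check but plays no essential role, since the final statement is symmetric.
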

\begin{proof}
Lemma~\ref{lem:disjoint-complexes-1} provides us with $l$ chambers $D_1,\ldots,D_l \subseteq \mathcal{D}$ that satisfy
\[
\left(\bigcup \limits_{i=1}^n \conv(E_i,D_{j_1})\right) \cap \conv(E_k,D_{j_2}) = E_k
\]
for all $1 \leq k \leq n$ and $1 \leq j_1 < j_2 \leq l$.
Thus we have
\begin{align*}
&\left(\bigcup \limits_{i=1}^n \conv(E_{i},D_{j_1})\right) \cap \left(\bigcup \limits_{k=1}^n \conv(E_{k},D_{j_2})\right)\\
= &\bigcup \limits_{k=1}^n \left(\bigcup \limits_{i=1}^n \conv(E_{i},D_{j_1}) \cap \conv(E_{k},D_{j_2})\right)\\
= &\bigcup \limits_{k=1}^n E_k,
\end{align*}
which proves the claim.
\end{proof}

\begin{theorem}\label{thm:disjoint-unions-of-apartments}
Let $\Delta$ be a finite, uniformly thick, $d$-dimensional Moufang building and let $\mathcal{E} \defeq \{E_1,\ldots,E_n\}$ be a set of chambers in $\Delta$.
Let $l \in \N$ and suppose that the thickness $t$ of $\Delta$ satisfies
\[
t > (n(d+1) + l) \cdot (n^2 \cdot c_d)(1+(d+1)^2n^2).
\]
Then there is an $l$-element subset $\{D_1,\ldots,D_l\} \subseteq \Opp_{\Delta}(\mathcal{E})$
such that
\[
\left(\bigcup \limits_{i=1}^n \conv(E_i,D_j)\right) \cap \left(\bigcup \limits_{i=1}^n \conv(E_i,D_k)\right) = \bigcup \limits_{i=1}^n E_i
\]
for all $1 \leq j \neq k \leq l$.
\end{theorem}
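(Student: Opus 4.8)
The plan is to deduce this from Corollary~\ref{cor:disjoint-complexes-2} by discarding a controlled number of chambers that fail to be opposite to the $E_i$. Set $L \defeq n(d+1) + l$. The thickness hypothesis of the theorem is exactly the hypothesis of Corollary~\ref{cor:disjoint-complexes-2} with $L$ in place of $l$, so that corollary produces chambers $D_1,\ldots,D_L \in \mathcal{D}$ (the set fixed after Lemma~\ref{lem:independent-proj-chambers}) with
\[
\left(\bigcup_{i=1}^n \conv(E_i,D_{j})\right) \cap \left(\bigcup_{i=1}^n \conv(E_i,D_{k})\right) = \bigcup_{i=1}^n E_i
\]
for all $1 \le j \neq k \le L$. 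This pairwise intersection property is inherited by every subset of $\{D_1,\ldots,D_L\}$, so it suffices to show that all but at most $n(d+1)$ of these chambers lie in $\Opp_\Delta(\mathcal{E})$.

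For that step I would use the standard characterization of opposition in a spherical building: a chamber $D$ is opposite a chamber $C$ if and only if $\pr_P(D)\neq C$ for every panel $P<C$. Negating this, $D$ fails to be opposite $E_j$ precisely when $\pr_{P_{E_j,i}}(D)=E_j$ for some $0\le i\le d$. Now recall that $\mathcal{D}$ was chosen (via Lemma~\ref{lem:independent-proj-chambers}) so that for each of the $(d+1)n$ panels $P_{E_j,i}$ the map $D\mapsto \pr_{P_{E_j,i}}(D)$ is injective on $\mathcal{D}$; hence its value equals $E_j$ for at most one $D\in\mathcal{D}$. Therefore the set $\mathcal{B}$ of chambers of $\mathcal{D}$ that fail to be opposite to some $E_j$ satisfies $\abs{\mathcal{B}}\le (d+1)n$, and $\mathcal{D}\setminus\mathcal{B}\subseteq \Opp_\Delta(\mathcal{E})$.

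Deleting from $\{D_1,\ldots,D_L\}$ the chambers lying in $\mathcal{B}$ removes at most $(d+1)n$ of them, leaving at least $L-(d+1)n=l$ chambers; after relabelling these are the desired $D_1,\ldots,D_l$. They lie in $\Opp_\Delta(\mathcal{E})$ by construction, and they retain the pairwise intersection identity displayed above, which is exactly the conclusion of the theorem.

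The step that deserves the most care is the opposition criterion, specifically the implication that $\pr_P(D)\neq C$ for every panel $P<C$ forces $D$ to be opposite $C$. I would argue as follows. The converse is immediate: if $\pr_P(D)=C$ for some panel $P<C$, then $C$ is the unique chamber of $\st_\Delta(P)$ closest to $D$, so every other chamber of $\st_\Delta(P)$ is at gallery distance $d_\Delta(C,D)+1$ from $D$, which is impossible once $d_\Delta(C,D)=\diam(\Delta)$. For the direction we need, choose by (B1) an apartment $\Sigma$ containing $C$ and $D$; recall $d_\Delta(C,D)=d_\Sigma(C,D)$ and $\diam(\Delta)=\diam(\Sigma)$. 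The hypothesis says every chamber of $\Sigma$ adjacent to $C$ is strictly closer to $D$ than $C$ is, i.e.\ $C$ is a local maximum of $d_\Sigma(\,\cdot\,,D)$ on the Coxeter complex $\Sigma$. Identifying the chambers of $\Sigma$ with the Weyl group, the element $w$ with $d_\Sigma(C,D)=\ell(w)$ then satisfies $\ell(ws)<\ell(w)$ for every $s\in S$, which forces $w=w_0$ and hence $d_\Sigma(C,D)=\ell(w_0)=\diam(\Delta)$, as needed (this uses only \cite[1.57]{AbramenkoBrown08} together with the standard fact that $w_0$ is the unique element whose right descent set is all of $S$). Everything else is bookkeeping with the constants.
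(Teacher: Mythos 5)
Your proposal is correct and follows essentially the same route as the paper: apply Corollary~\ref{cor:disjoint-complexes-2} with $l' = n(d+1)+l$, use the injectivity of the projections $\pr_{P_{E_j,i}}$ on $\mathcal{D}$ to bound the number of chambers failing to be opposite some $E_j$ by $n(d+1)$, and discard those. The only difference is that you spell out the opposition criterion ($D$ opposite $C$ iff $\pr_P(D)\neq C$ for every panel $P<C$), which the paper simply cites as an exercise from Abramenko--Brown; your argument for it is correct.
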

\begin{proof}
From Corollary~\ref{cor:disjoint-complexes-2} we know that there are $l' \defeq n(d+1) + l$ chambers $D_1,\ldots,D_{l'} \in \mathcal{D}$ that satisfy
\[
\left(\bigcup \limits_{i=1}^n \conv(E_i,D_{k_1})\right) \cap \left(\bigcup \limits_{i=1}^n \conv(E_i,D_{k_2})\right) = \bigcup \limits_{i=1}^n E_i
\]
for all $1 \leq k_1 \neq k_2 \leq l'$.
Recall that by the definition of $\mathcal{D}$ we further have
\[
\pr_{P_{E_j,i}}(D_{k_1}) \neq \pr_{P_{E_j,i}}(D_{k_2})
\]
for all $0 \leq i \leq d$, $1 \leq j \leq n$ and $1 \leq k_1 \neq k_2 \leq l'$.
This particularly tells us that for every $1 \leq j \leq n$ and every panel $P$ of $E_j$ there is at most one chamber $D \in \mathcal{D}$ with $\pr_P(D) = E_j$.
It is therefore sufficient to remove at most $n(d+1)$ chambers from $\{D_1,\ldots,D_{l'}\}$ in order to ensure that every chamber $D$ in the remaining subset $\widehat{\mathcal{D}} \subseteq \{D_1,\ldots,D_{l'}\}$ satisfies $\pr_P(D) \neq E_j$ for every $1 \leq j \leq n$ and every panel $P$ of $E_j$.
In this case it is an easy exercise (see, e.g.,~\cite[1.59(b)]{AbramenkoBrown08}) to show that every $D\in\widehat{\mathcal{D}}$ is opposite to each of the chambers $E_j$.
For the cardinality of $\widehat{\mathcal{D}}$ we have
\[
\abs{\widehat{\mathcal{D}}} \geq l' - n(d+1) = l,
\]
which proves the claim.
\end{proof}

\section{Some calculus}\label{sec:calc}

\begin{definition}\label{def:exponentially-decreasing}
We say that a function $f \colon \N \rightarrow \R$ is \emph{exponentially decreasing} if there is a constant $\delta \in [0,1)$ such that $f(n) < \delta^n$ for all but finitely many $n$.
\end{definition}

\begin{notation}\label{not:power-set}
Let $S$ be an arbitrary set.
As before, the set of all subsets of $S$ is denoted by $\mathcal{P}(S)$.
For every $n \in \N$ we further write $\mathcal{P}_{n}(S)$ to denote the set of subsets $A \subseteq S$ with $\abs{A} \leq n$.
\end{notation}

\begin{notation}\label{not:avoidance-number}
Let $S$ be a finite set and let $A_i \in \mathcal{P}(S),\ 1 \leq i \leq m$ be some pairwise disjoint subsets.
The set of subsets $T \subseteq S$ with $A_i \nsubseteq T$ for all $1 \leq i \leq m$ will be denoted by
\[
\mathcal{P}(S,\Set{A_i}{1 \leq i \leq m}).
\]
Given any $m,n \in \N$, we denote by $p_{m,n}(S)$ the maximal cardinality of a set of the form $\mathcal{P}(S,\mathfrak{A})$, where $\mathfrak{A} = \Set{A_i}{1 \leq i \leq m}$ consists of $m$ pairwise disjoint sets $A_i \in \mathcal{P}_n(S)$.
\end{notation}

\begin{lemma}\label{lem:independend-quotient}
Let $m,n \in \N$ and let $S$ be a finite set.
The quotient $\frac{p_{m,n}(S)}{\abs{\mathcal{P}(S)}}$ is bounded above by $\Big(\frac{2^n - 1}{2^n}\Big)^m$.
If $\abs{S} \geq m \cdot n$, then $\frac{p_{m,n}(S)}{\abs{\mathcal{P}(S)}} = \Big(\frac{2^n - 1}{2^n}\Big)^m$.
\end{lemma}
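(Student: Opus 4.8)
The plan is to compute $\abs{\mathcal{P}(S,\mathfrak{A})}$ exactly for an arbitrary admissible family $\mathfrak{A}$ and only then optimize. So fix pairwise disjoint subsets $A_1,\ldots,A_m \in \mathcal{P}_n(S)$ and put $\mathfrak{A} = \Set{A_i}{1 \le i \le m}$. The key observation is that if $T \subseteq S$ is chosen uniformly at random, then for each $i$ the condition $A_i \subseteq T$ depends only on $T \cap A_i$, and these intersections are mutually independent \emph{precisely because} the $A_i$ are pairwise disjoint. Since $\Pr[A_i \subseteq T] = 2^{-\abs{A_i}}$, this yields
\[
\frac{\abs{\mathcal{P}(S,\mathfrak{A})}}{\abs{\mathcal{P}(S)}} = \prod_{i=1}^{m}\left(1 - \frac{1}{2^{\abs{A_i}}}\right).
\]
If one prefers to avoid probabilistic language, the same identity falls out of inclusion--exclusion over the $2^m$ subfamilies of $\mathfrak{A}$, using disjointness in the form $\abs{\bigcup_{i \in J} A_i} = \sum_{i \in J}\abs{A_i}$ to factor the alternating sum.

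For the upper bound I would then note that, since $\abs{A_i} \le n$, each factor satisfies $1 - 2^{-\abs{A_i}} \le 1 - 2^{-n} = \frac{2^n-1}{2^n}$, so the product is at most $\Big(\frac{2^n-1}{2^n}\Big)^m$. Taking the maximum over all admissible $\mathfrak{A}$ (a maximum over a finite set, hence attained) gives $p_{m,n}(S) \le \abs{\mathcal{P}(S)}\cdot\Big(\frac{2^n-1}{2^n}\Big)^m$, which is the first assertion.

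For the equality statement, assume $\abs{S} \ge m\cdot n$. Then there is enough room to choose $m$ pairwise disjoint subsets $A_1,\ldots,A_m \subseteq S$ each of cardinality \emph{exactly} $n$. For this family the displayed identity gives $\frac{\abs{\mathcal{P}(S,\mathfrak{A})}}{\abs{\mathcal{P}(S)}} = \prod_{i=1}^m (1 - 2^{-n}) = \Big(\frac{2^n-1}{2^n}\Big)^m$ exactly, so $p_{m,n}(S) \ge \abs{\mathcal{P}(S)}\cdot\Big(\frac{2^n-1}{2^n}\Big)^m$; combined with the upper bound this finishes the proof.

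There is no real obstacle here: the whole content is the remark that pairwise disjointness turns ``$T$ avoids every $A_i$'' into a product of independent local conditions. The only points needing a word of care are that the maximum defining $p_{m,n}(S)$ is attained (automatic, $\mathcal{P}_n(S)$ being finite) and, in the equality case, that $\abs{S} \ge mn$ is exactly what is needed to place $m$ disjoint $n$-element sets inside $S$.
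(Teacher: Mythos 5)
Your proposal is correct and takes essentially the same route as the paper: the core step in both is that disjointness of the $A_i$ lets the quotient factor as $\prod_{i=1}^m \bigl(1 - 2^{-\abs{A_i}}\bigr)$, from which the bound and the equality case (packing $m$ disjoint $n$-sets when $\abs{S}\ge mn$) follow immediately. The only difference is presentational---the paper computes the factorization by direct counting of $T\cap A_i$ and $T\cap(S\setminus\bigcup A_i)$, whereas you phrase it as independence of the events $A_i\subseteq T$ under the uniform measure---but this is the same argument.
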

\begin{proof}
Let $T \subseteq S$, and let $A_i \in \mathcal{P}_n(S),\ 1 \leq i \leq m$ be some pairwise disjoint subsets.
For each $1 \leq i \leq m$ let $T_i \defeq T \cap A_i$.
Then $T$ is contained in $\mathcal{P}(S,\mathfrak{A})$ if and only if $T_i \neq A_i$ for all $1 \leq i \leq m$.
Note that this property does not depend on the cardinality of $T \cap \left(S \setminus \bigcup \limits_{i=1}^m A_i\right)$.
It therefore follows that
\begin{align*}
\frac{\abs{\mathcal{P}(S,\mathfrak{A})}}{\abs{\mathcal{P}(S)}}
&= \frac{\abs{\mathcal{P}(S \setminus \bigcup \limits_{i=1}^m A_i)} \cdot \prod \limits_{i=1}^m (\abs{\mathcal{P}(A_i)} - 1)}{\abs{\mathcal{P}(S)}}\\
&= \frac{2^{\abs{S \setminus \bigcup_{i=1}^m A_i}}}{2^{\abs{S}}} \cdot \prod \limits_{i=1}^m (2^{\abs{A_i}} - 1)\\
&= \frac{1}{2^{\abs{\bigcup_{i=1}^m A_i}}} \cdot \prod \limits_{i=1}^m (2^{\abs{A_i}} - 1)\\
&= \prod \limits_{i=1}^m \frac{(2^{\abs{A_i}} - 1)}{2^{\abs{A_i}}}.
\end{align*}
Since $\abs{A_i} \leq n$, it follows that $\frac{2^{\abs{A_i}} - 1}{2^{\abs{A_i}}} \leq \frac{2^n - 1}{2^n}$.
We conclude that $\frac{p_{m,n}(S)}{\abs{\mathcal{P}(S)}} \leq \Big(\frac{2^n - 1}{2^n}\Big)^m$.
Suppose now that $\abs{S} \geq m \cdot n$.
Then we can choose the subsets $A_i$ to have cardinality $\abs{A_i}=n$.
In this case the last statement of the lemma follows directly from the above chain of equalities.
\end{proof}

\begin{notation}\label{not:avoidance-probability}
Given any $m,n \in \N$, we write $p(m,n) \defeq \Big(\frac{2^n - 1}{2^n}\Big)^m.
$
\end{notation}

\begin{remark}
Note that by Lemma~\ref{lem:independend-quotient} we have
\[
p(m,n) = \max \limits_{S} \frac{p_{m,n}(S)}{\abs{\mathcal{P}(S)}},
\]
where $S$ runs over all finite sets.
\end{remark}

Recall that a function $f \colon \N \rightarrow \R_{\geq 0}$ is called \emph{subexponential} if $\lim \limits_{n \rightarrow \infty} \frac{f(n)}{a^n} = 0$ for every $a > 1$.

\begin{lemma}\label{lem:exponentially-decreasing}
Let $N \in \N$, $c \in \R$, and $\lambda > 0$ be some constants and let $\varepsilon \colon \N \rightarrow \R_{\geq 0}$ be a subexponential function.
Let $(t_n)_{n \in \N}$ be a sequence of natural numbers with $t_n \geq n$ for every $n \in \N$.
Then the function given by
\[
n \mapsto \varepsilon(t_n) \cdot p(\floor{\lambda t_n + c},N)
\]
is exponentially decreasing.
\end{lemma}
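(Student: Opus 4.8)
The plan is to make the definitions explicit and reduce the statement to comparing two geometric sequences. By Notation~\ref{not:avoidance-probability},
\[
p(\floor{\lambda t_n + c}, N) = \left(\frac{2^N - 1}{2^N}\right)^{\floor{\lambda t_n + c}},
\]
so I would set $r \defeq \frac{2^N-1}{2^N}$ and $s \defeq r^{\lambda}$, both lying in $(0,1)$ (assuming, as the paper's conventions suggest, that $N \geq 1$; if $N=0$ then $r=0$ and the claim is trivial). Since $\floor{x} \geq x-1$ and $0 < r < 1$, the function $y \mapsto r^{y}$ is decreasing, so for every $n$,
\[
p(\floor{\lambda t_n + c}, N) \leq r^{\lambda t_n + c - 1} = r^{c-1} \cdot s^{t_n}.
\]

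Next I would absorb the factor $\varepsilon(t_n)$ into the geometric decay. Put $a \defeq s^{-1/2} > 1$. Because $\varepsilon$ is subexponential, $\varepsilon(k)/a^{k} \to 0$, so there is an $n_0$ with $\varepsilon(k) < a^{k} = s^{-k/2}$ whenever $k \geq n_0$. The hypothesis $t_n \geq n$ is used precisely here: for $n \geq n_0$ we have $t_n \geq n_0$, hence $\varepsilon(t_n) < s^{-t_n/2}$, and therefore
\[
\varepsilon(t_n) \cdot p(\floor{\lambda t_n + c}, N) \leq r^{c-1} \cdot \varepsilon(t_n) \cdot s^{t_n} \leq r^{c-1} \cdot s^{t_n/2}
\]
for all $n \geq n_0$. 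Using $t_n \geq n$ and $0 < s < 1$ once more gives $s^{t_n/2} \leq (s^{1/2})^{n}$, so the left-hand side is at most $r^{c-1} (s^{1/2})^{n}$ for all but finitely many $n$.

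Finally, to match Definition~\ref{def:exponentially-decreasing} I would choose any $\delta$ with $s^{1/2} < \delta < 1$. Then $r^{c-1}(s^{1/2})^{n} / \delta^{n} = r^{c-1}(s^{1/2}/\delta)^{n} \to 0$ as $n \to \infty$, since $s^{1/2}/\delta < 1$, so $r^{c-1}(s^{1/2})^{n} < \delta^{n}$ for all but finitely many $n$; combined with the previous bound, the given function is $< \delta^{n}$ eventually, i.e.\ exponentially decreasing. I do not expect a serious obstacle here; the only points to watch are the direction of the inequality when exponentiating a base in $(0,1)$ (so that $\floor{x}\geq x-1$ yields an \emph{upper} bound on $r^{\floor{x}}$) and applying the subexponential estimate for $\varepsilon$ at the argument $t_n$ rather than $n$, which is exactly where $t_n \geq n$ is needed.
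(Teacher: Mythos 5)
Your proof is correct and takes essentially the same route as the paper: bound $p(\lfloor\lambda t_n + c\rfloor, N)$ by a geometric decay in $t_n$, use subexponentiality of $\varepsilon$ to dominate $\varepsilon(t_n)$ by $a^{t_n}$ with $a$ chosen small enough that the product still decays, and then convert the bound in $t_n$ to a bound in $n$ via $t_n \geq n$. The only cosmetic difference is that the paper picks an arbitrary $a>1$ with $a\delta^{\lambda}<1$ while you make the concrete choice $a = s^{-1/2}$; your write-up is slightly more explicit in the final step of exhibiting a valid $\delta$ for Definition~\ref{def:exponentially-decreasing}.
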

\begin{proof}
Let $\delta \defeq \frac{2^N - 1}{2^N}$.
From Lemma~\ref{lem:independend-quotient} it follows that $p(n,N) \leq \delta^n$ for every $n \in \N$.
Let $a > 1$ be such that $a \delta^{\lambda} < 1$.
Since $\varepsilon$ is subexponential we have $\lim \limits_{n \rightarrow \infty} \frac{\varepsilon(n)}{a^n} = 0$, which tells us that $\varepsilon(n) < a^n$ for all but finitely many $n \in \N$.
Thus we obtain
\[
\varepsilon(t_n) \cdot p(\floor{\lambda t_n + c},N)
< a^{t_n} \cdot \delta^{\floor{\lambda t_n + c}}
< a^{t_n} \cdot \delta^{\lambda t_n + c - 1}
= (a\delta^{\lambda})^{t_n} \cdot \delta^{c-1}
\]
for all but finitely many $n \in \N$.
Now the claim follows from our assumption that $t_n \geq n$ for every $n \in \N$.
\end{proof}

The main result of this section is the following technical observation, which will help us to prove some properties of random subcomplexes of finite buildings.

\begin{corollary}\label{cor:sequence-disjoint-sets}
Let $N \in \N$, $c \in \R$, and $\lambda > 0$ be some constants, let $(t_n)_{n \in \N}$ be a sequence of natural numbers with $t_n \geq n$, let $\varepsilon \colon \N \rightarrow \R_{\geq 0}$ be a subexponential function, and let $(X_n)_{n \in \N}$ be a sequence of finite sets.
Suppose that we can find non-empty subsets $\mathfrak{A}_n \subseteq \mathcal{P}(\mathcal{P}_{N}(X_n))$ of cardinality $\abs{\mathfrak{A}_n} \leq \varepsilon(t_n)$, such that every $\mathcal{A} \in \mathfrak{A}_n$ satisfies $\lambda \abs{\mathcal{A}} + c \geq t_n$, and $A \cap A' = \emptyset$ for all distinct $A,A' \in \mathcal{A}$.
Then the function
\[
h \colon \N \rightarrow \R_{\geq 0},\ n \mapsto \abs{\bigcup_{\mathcal{A} \in \mathfrak{A}_n} \mathcal{P}(X_n,\mathcal{A})} \cdot \abs{\mathcal{P}(X_n)}^{-1}
\]
is exponentially decreasing.
\end{corollary}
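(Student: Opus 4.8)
The plan is to combine a crude union bound with the quotient estimate of Lemma~\ref{lem:independend-quotient} and the decay estimate of Lemma~\ref{lem:exponentially-decreasing}. First I would apply the union bound to obtain
\[
\abs{\bigcup_{\mathcal{A} \in \mathfrak{A}_n} \mathcal{P}(X_n,\mathcal{A})}
\;\leq\; \sum_{\mathcal{A} \in \mathfrak{A}_n} \abs{\mathcal{P}(X_n,\mathcal{A})}
\;\leq\; \abs{\mathfrak{A}_n} \cdot \max_{\mathcal{A} \in \mathfrak{A}_n} \abs{\mathcal{P}(X_n,\mathcal{A})},
\]
and then, dividing by $\abs{\mathcal{P}(X_n)}$ and using $\abs{\mathfrak{A}_n} \leq \varepsilon(t_n)$, conclude
\[
h(n) \;\leq\; \varepsilon(t_n) \cdot \max_{\mathcal{A} \in \mathfrak{A}_n} \frac{\abs{\mathcal{P}(X_n,\mathcal{A})}}{\abs{\mathcal{P}(X_n)}}.
\]
For a fixed $\mathcal{A} \in \mathfrak{A}_n$, the collection $\mathcal{A}$ consists of $\abs{\mathcal{A}}$ pairwise disjoint members of $\mathcal{P}_N(X_n)$, so $\abs{\mathcal{P}(X_n,\mathcal{A})} \leq p_{\abs{\mathcal{A}},N}(X_n)$ straight from the definition of $p_{m,N}$ (Notation~\ref{not:avoidance-number}), and hence $\frac{\abs{\mathcal{P}(X_n,\mathcal{A})}}{\abs{\mathcal{P}(X_n)}} \leq p(\abs{\mathcal{A}},N)$ by Lemma~\ref{lem:independend-quotient}.

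Next I would remove the dependence of the exponent on $\mathcal{A}$. Since $\abs{\mathcal{A}}$ is a non-negative integer and $\lambda \abs{\mathcal{A}} + c \geq t_n$ forces $\abs{\mathcal{A}} \geq \tfrac{t_n - c}{\lambda}$, we get $\abs{\mathcal{A}} \geq \floor{\lambda^{-1} t_n - c\lambda^{-1}}$; and as $p(\cdot,N)$ is non-increasing in its first argument, this gives $p(\abs{\mathcal{A}},N) \leq p(\floor{\lambda^{-1} t_n - c\lambda^{-1}},N)$ uniformly over $\mathcal{A} \in \mathfrak{A}_n$. Therefore
\[
h(n) \;\leq\; \varepsilon(t_n) \cdot p\!\left(\floor{\lambda^{-1} t_n + \left(-c\lambda^{-1}\right)},N\right).
\]
Finally I would invoke Lemma~\ref{lem:exponentially-decreasing}, applied with the constants $N$, $c' \defeq -c\lambda^{-1}$, $\lambda' \defeq \lambda^{-1} > 0$, the sequence $(t_n)$ (which satisfies $t_n \geq n$), and the subexponential function $\varepsilon$, to see that the right-hand side above is exponentially decreasing; since $0 \leq h(n)$ is bounded above by it, $h$ is exponentially decreasing as well.

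I do not expect a genuine obstacle here: the argument is essentially bookkeeping around two lemmas already established. The only step that deserves a moment of care is the one just described, namely replacing the data-dependent exponent $\abs{\mathcal{A}}$ by a lower bound depending only on $t_n$, so that the maximum over $\mathcal{A} \in \mathfrak{A}_n$ can be discarded and Lemma~\ref{lem:exponentially-decreasing} becomes directly applicable. (If $\floor{\lambda^{-1} t_n - c\lambda^{-1}}$ happens to be non-positive, this concerns only finitely many $n$ and is harmless for the property of being exponentially decreasing.)
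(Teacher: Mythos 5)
Your proposal is correct and follows essentially the same route as the paper's proof: union bound, then Lemma~\ref{lem:independend-quotient} to bound each quotient by $p(\abs{\mathcal{A}},N)$, then the constraint $\lambda\abs{\mathcal{A}}+c \geq t_n$ together with the monotonicity of $p(\cdot,N)$ to replace the data-dependent exponent by $\floor{\lambda^{-1}t_n - c\lambda^{-1}}$, and finally Lemma~\ref{lem:exponentially-decreasing}. The only cosmetic difference is that you bound the sum by $\abs{\mathfrak{A}_n}$ times the maximum term, whereas the paper bounds each summand uniformly and then counts; these give the same estimate.
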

\begin{proof}
The function $h$ is bounded above by
\begin{align*}
\sum \limits_{\mathcal{A} \in \mathfrak{A}_n} \frac{\abs{\mathcal{P}(X_n,\mathcal{A})}}{\abs{\mathcal{P}(X_n)}}
&\leq \sum \limits_{\mathcal{A} \in \mathfrak{A}_n} p(\abs{\mathcal{A}},N)\\
&\leq \sum \limits_{\mathcal{A} \in \mathfrak{A}_n} p(\floor{\lambda^{-1}t_n - \lambda^{-1}c},N)\\
&\leq \varepsilon(t_n) \cdot p(\floor{\lambda^{-1}t_n - \lambda^{-1}c},N).
\end{align*}
Now it remains to apply Lemma~\ref{lem:exponentially-decreasing} which tells us that the function given by
\[
n \mapsto \varepsilon(t_n) \cdot p(\lambda^{-1}t_n - \lambda^{-1}c,N)
\]
is exponentially decreasing.
\end{proof}

\section{Random subcomplexes are chamber complexes}\label{sec:random-subcomplexes-are-chamber}

In this section we prove that most induced subcomplexes of uniformly thick, finite Moufang buildings are chamber complexes.
Recall that a finite-dimensional simplicial complex $X$ is a \emph{chamber complex} if
\begin{enumerate}
\item all maximal simplices have the same dimension and
\item any two maximal simplices $C,D$ can be connected by a \emph{gallery}, i.e., a sequence of maximal simplices $E_1,\ldots,E_n$ with $E_1 = C$, $E_n = D$ and $\dim(E_i \cap E_{i+1}) = \dim(X)-1$ for all $1 \leq i < n$.
\end{enumerate}

Maximal simplices in chamber complexes will be called \emph{chambers}.

\subsection{Dimension of maximal simplices}

Let us first take a look at the extent to which property (1) is inherited by random induced subcomplexes of finite Moufang buildings.
Note that there is a canonical one-to-one correspondence between induced subcomplexes of a simplicial complex $X$ and subsets of $X^{(0)}$.
We will therefore write $\mathcal{P}(X)$ to denote the set of induced subcomplexes of $X$.
Further we write $\mathcal{P}_n(X)$ to denote the set of induced subcomplexes of $X$ with at most $n$ vertices.

\begin{notation}\label{not:set-of-non-chamber-subcomplexes}
For a building $\Delta$ we define $\mathcal{P}_{\mathcal{L}}(\Delta) \subseteq \mathcal{P}(\Delta)$ to be the set of induced subcomplexes $X \subseteq \Delta$ that contain a maximal simplex $\sigma \subseteq X$ of dimension $\dim(\sigma) < \dim(\Delta)$.
Let $\mathcal{L}(\Delta) \subseteq \mathcal{P}(\Delta)$
denote the set of links $\lk_{\Delta}(\sigma)$ where $\sigma \subseteq \Delta$ is a simplex with $\dim(\sigma) < \dim(\Delta)$.
\end{notation}

\begin{proposition}\label{prop:random-subcomplexes-are-chamber-1}
Let $d \in \N$ and let $(\Delta_n)_{n \in \N}$ be a sequence of finite, uniformly thick, $d$-dimensional Moufang buildings.
Suppose that $t_n \defeq \thickness(\Delta_n) > n$ for every $n \in \N$.
Then the function
\begin{equation}\label{eq:random-subcomplexes-are-chamber-1}
g \colon \N \rightarrow \R_{\geq 0},\ n \mapsto \frac{\abs{\mathcal{P}_{\mathcal{L}}(\Delta_n)}}{\abs{\mathcal{P}(\Delta_n)}}
\end{equation}
is exponentially decreasing.
\end{proposition}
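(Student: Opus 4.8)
The plan is to translate membership in $\mathcal{P}_{\mathcal{L}}(\Delta_n)$ into a "forbidden pattern" condition on vertex sets, reduce that condition to avoidance of the vertex set of a single panel link, and then feed the result into Corollary~\ref{cor:sequence-disjoint-sets}. Throughout, abbreviate $\Delta\defeq\Delta_n$ and $t\defeq t_n$, and identify an induced subcomplex of $\Delta$ with its vertex set $T\subseteq\Delta^{(0)}$. The first step is the elementary observation that the induced subcomplex on $T$ has a maximal simplex of dimension $<d$ \emph{precisely} when there is a simplex $\sigma\subseteq\Delta$ with $\dim(\sigma)<d$, $\sigma^{(0)}\subseteq T$, and $T\cap\lk_{\Delta}(\sigma)^{(0)}=\emptyset$; the last condition just records that $\sigma$ cannot be enlarged inside $T$. (If $d=0$ the proposition is immediate, as the empty complex is then the only member of $\mathcal{P}_{\mathcal{L}}(\Delta_n)$, so assume $d\ge 1$.)

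The second step is to replace $\sigma$ by a panel. Since $\dim(\sigma)<d=\dim(\Delta)$, the simplex $\sigma$ is a proper face of some chamber $C$ of $\Delta$, hence is contained in some panel $P\subseteq C$. Every vertex of $\lk_{\Delta}(P)$ is also a vertex of $\lk_{\Delta}(\sigma)$, so $\lk_{\Delta}(P)^{(0)}\subseteq\lk_{\Delta}(\sigma)^{(0)}$, and therefore $T\cap\lk_{\Delta}(P)^{(0)}=\emptyset$ as well. This yields the inclusion
\[
\mathcal{P}_{\mathcal{L}}(\Delta_n)\subseteq\bigcup_{P}\Set{T\subseteq\Delta_n^{(0)}}{T\cap\lk_{\Delta_n}(P)^{(0)}=\emptyset},
\]
the union running over all panels $P$ of $\Delta_n$.

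For the third step I would set $X_n\defeq\Delta_n^{(0)}$ and, for each panel $P$, let $\mathcal{A}_P\defeq\Set{\{v\}}{v\in\lk_{\Delta_n}(P)^{(0)}}$ be the corresponding collection of pairwise disjoint singletons, and put $\mathfrak{A}_n\defeq\Set{\mathcal{A}_P}{P\text{ a panel of }\Delta_n}$. Then $\Set{T}{T\cap\lk_{\Delta_n}(P)^{(0)}=\emptyset}=\mathcal{P}(X_n,\mathcal{A}_P)$ in the notation of Section~\ref{sec:calc}. By uniform thickness each panel link has exactly $t_n$ vertices, so $\abs{\mathcal{A}_P}=t_n$; and by Lemma~\ref{lem:polynomial-bounded} the number of panels of $\Delta_n$, hence $\abs{\mathfrak{A}_n}$, is at most $q_d(t_n)$ for a polynomially bounded (in particular subexponential) function $q_d$ depending only on $d$. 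Thus Corollary~\ref{cor:sequence-disjoint-sets} applies with $N=1$, $\lambda=1$, $c=0$, $\varepsilon=q_d$ (using $t_n>n$, whence $t_n\ge n$, and $\lambda\abs{\mathcal{A}_P}+c=t_n\ge t_n$), and it tells us that $n\mapsto\abs{\bigcup_{\mathcal{A}\in\mathfrak{A}_n}\mathcal{P}(X_n,\mathcal{A})}\cdot\abs{\mathcal{P}(X_n)}^{-1}$ is exponentially decreasing. Since the displayed inclusion bounds $g(n)$ above by this quantity, $g$ is exponentially decreasing, as claimed.

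I expect the only genuinely delicate point to be the reduction in the second step — verifying that a maximal-but-low-dimensional simplex can be absorbed into a panel and that avoiding the larger vertex set $\lk_{\Delta}(\sigma)^{(0)}$ forces avoiding the smaller one $\lk_{\Delta}(P)^{(0)}$. Everything after that is bookkeeping: counting panels via Lemma~\ref{lem:polynomial-bounded} and checking that the data $(N,\lambda,c,\varepsilon)$ meet the hypotheses of Corollary~\ref{cor:sequence-disjoint-sets}, whose whole purpose is to convert "subexponentially many disjoint forbidden patterns, each of size linear in $t_n$" into exponential decay of the proportion.
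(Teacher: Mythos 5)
Your proof is correct and takes essentially the same approach as the paper: you identify $\mathcal{P}_{\mathcal{L}}(\Delta_n)$ with avoidance of a link's vertex set and feed the singletons of those vertices into Corollary~\ref{cor:sequence-disjoint-sets}. The only cosmetic difference is that the paper indexes $\mathfrak{A}_n$ by the links $\lk_{\Delta_n}(\sigma)$ over \emph{all} simplices $\sigma$ with $\dim(\sigma)<d$ (each such link contains a panel link, so has at least $t_n$ vertices), whereas you first absorb $\sigma$ into a panel and work only with panel links; your reduction is a harmless streamlining that makes the lower bound $\abs{\mathcal{A}}\ge t_n$ explicit.
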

\begin{proof}
Note that if $X$ is an induced subcomplex of $\Delta_n$ that contains a maximal simplex $\sigma$ of dimension less than $d$, then $\lk_X(\sigma)$ is empty.
We therefore have
\[
\mathcal{P}_{\mathcal{L}}(\Delta_n) \subseteq \Set{X \in \mathcal{P}(\Delta_n)}{X \cap L = \emptyset, \text{ for some } L \in \mathcal{L}(\Delta_n)}.
\]
Let $\mathcal{Q}_n$ denote the latter set.
Let $\mathcal{A}_L \defeq \Set{\{x\} \in \mathcal{P}_1(\Delta_n)}{x \in L \text{ is a vertex}}$ for each $L \in \mathcal{L}(\Delta_n)$ and let $\mathfrak{A}_n \defeq \Set{\mathcal{A}_L}{L \in \mathcal{L}(\Delta_n)}$.
Note that with this terminology, the set $\mathcal{Q}_n$ can be written as
\[
\mathcal{Q}_n = \bigcup_{\mathcal{A}_L \in \mathfrak{A}_n} \mathcal{P}(\Delta_n,\mathcal{A}_L).
\]
From Lemma~\ref{lem:polynomial-bounded} we know that there is a polynomial $p_d$ with
\[
\abs{\mathfrak{A}_n} = \abs{\mathcal{L}(\Delta_n)} \leq p_d(t_n)
\]
for all $n \in \N$.
Furthermore we have $\abs{\mathcal{A}_L} = \abs{L} \geq t_n$ for every $\mathcal{A}_L \in \mathfrak{A}_n$, and polynomials are subexponential, so we can apply Corollary~\ref{cor:sequence-disjoint-sets} to deduce that the function $n \mapsto \frac{\abs{\mathcal{Q}_n}}{\abs{\mathcal{P}(\Delta_n)}}$
is exponentially decreasing.
Now the claim follows since $\abs{\mathcal{P}_{\mathcal{L}}(\Delta_n)} \leq \abs{\mathcal{Q}_n}$.
\end{proof}

\subsection{Existence of galleries}

\begin{definition}\label{def:k-chamber-contractible}
Let $n \in \N$, let $\Delta$ be a finite building, and let $X \subseteq \Delta$ be an induced subcomplex.
We say that $X$ satisfies the \emph{$n$-covering property} if for every $Y \in \mathcal{P}_n(X)$ there is a chamber $D_Y \in \Ch(\Delta)$ and some apartments $\Sigma_1, \ldots, \Sigma_m \subseteq \Delta$ such that
\begin{enumerate}[(a)]
\item $D_Y \subseteq \Sigma_i$ for $1 \leq i \leq m$,
\item $Y \subseteq \bigcup_{i=1}^{m} \op_{\Sigma_i}(D_Y)$, and
\item $\bigcup_{i=1}^{m} \Sigma_i \subseteq X$.
\end{enumerate}
Let $\mathcal{D}_n(\Delta)$ denote the set of induced subcomplexes $X \subseteq \Delta$ that do \emph{not} satisfy the $n$-covering property.
\end{definition}

For the sake of intuition, note for example that $X$ satisfies the $1$-covering property if and only if for every vertex $x$ of $X$ there is an apartment $\Sigma$ of $\Delta$ such that $x\in \Sigma\subseteq X$.

Recall from Definition~\ref{def:max-vertices-apartment} that for $d\in\N$, $c_d$ denotes the maximal number of cells that can be contained in an apartment of a $d$-dimensional finite thick building.

\begin{theorem}\label{thm:k-chamber-contractible}
Let $d,n \in \N$ and let $(\Delta_m)_{m \in \N}$ be a sequence of finite, uniformly thick, $d$-dimensional Moufang buildings.
Suppose that $t_m \defeq \thickness(\Delta_m)$ satisfies
\begin{equation}\label{eq:k-chamber-contractible}
t_m > (n(d+1) + m) \cdot (n^2 \cdot c_d)(1+(d+1)^2n^2)
\end{equation}
for every $m \in \N$.
Then the function
\[
g \colon \N \rightarrow \R_{\geq 0},\ m \mapsto \frac{\abs{\mathcal{D}_n(\Delta_m)}}{\abs{\mathcal{P}(\Delta_m)}}
\]
is exponentially decreasing.
\end{theorem}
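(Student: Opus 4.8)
The plan is to bound $\mathcal{D}_n(\Delta_m)$ above, up to an exponentially small proportion, by a union of ``avoidance sets'' $\mathcal{P}(\Delta_m^{(0)},\mathcal{A})$ to which Corollary~\ref{cor:sequence-disjoint-sets} applies. First I would split
\[
\mathcal{D}_n(\Delta_m) \subseteq \mathcal{P}_{\mathcal{L}}(\Delta_m) \cup \mathcal{R}_m,
\]
where $\mathcal{R}_m$ consists of the $X \in \mathcal{D}_n(\Delta_m)$ that are \emph{not} in $\mathcal{P}_{\mathcal{L}}(\Delta_m)$, i.e.\ whose maximal simplices all have dimension $d$. Since the hypothesis on $t_m$ forces $t_m > m$, Proposition~\ref{prop:random-subcomplexes-are-chamber-1} already shows $\abs{\mathcal{P}_{\mathcal{L}}(\Delta_m)}/\abs{\mathcal{P}(\Delta_m)}$ is exponentially decreasing, so all the work is in bounding $\mathcal{R}_m$.

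Next I would extract a combinatorial obstruction from each $X \in \mathcal{R}_m$. Choose $Y \in \mathcal{P}_n(X)$ witnessing failure of the $n$-covering property. Because $X$ is pure $d$-dimensional, every maximal simplex of $Y$ extends to a chamber of $X$; collecting one such chamber per maximal simplex of $Y$ yields a set $\mathcal{E} \subseteq \Ch(\Delta_m)$ of boundedly many chambers (at most $r = r(n)$, the maximal number of maximal simplices of a flag complex on $\le n$ vertices) with $Y \subseteq \bigcup_{E \in \mathcal{E}} E$ and $\bigcup_{E \in \mathcal{E}} E^{(0)} \subseteq X^{(0)}$. Applying Theorem~\ref{thm:disjoint-unions-of-apartments} to $\mathcal{E}$ with $l \defeq m$ produces chambers $D_1,\dots,D_m \in \Opp_{\Delta_m}(\mathcal{E})$ with
\[
\Big(\bigcup_{E \in \mathcal{E}} \conv(E,D_j)\Big) \cap \Big(\bigcup_{E \in \mathcal{E}} \conv(E,D_k)\Big) = \bigcup_{E \in \mathcal{E}} E \qquad (1 \le j \ne k \le m).
\]
Since $D_j$ is opposite each $E \in \mathcal{E}$, each $\conv(E,D_j)$ is an apartment containing $D_j$ whose chamber opposite $D_j$ is exactly $E$ (by Lemma~\ref{lem:conv-hull-of-two-chambers} and convexity of apartments). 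Hence if $\bigcup_{E \in \mathcal{E}}\conv(E,D_j) \subseteq X$ held for even one $j$, then $D_j$ together with the apartments $\conv(E,D_j)$, $E \in \mathcal{E}$, would verify the $n$-covering property for $Y$ — a contradiction. Therefore $\bigcup_{E}\conv(E,D_j) \not\subseteq X$ for every $j$, and since $\bigcup_E E^{(0)} \subseteq X^{(0)}$ this forces
\[
A_j \defeq \Big(\bigcup_{E \in \mathcal{E}}\conv(E,D_j)\Big)^{(0)} \setminus \Big(\bigcup_{E \in \mathcal{E}} E\Big)^{(0)} \not\subseteq X^{(0)} \qquad (1 \le j \le m).
\]
By the displayed identity the sets $A_1,\dots,A_m$ are pairwise disjoint, each of size at most $r c_d \eqdef N$, so $X \in \mathcal{P}(\Delta_m^{(0)},\mathcal{A}_{Y,\mathcal{E}})$ with $\mathcal{A}_{Y,\mathcal{E}} \defeq \{A_1,\dots,A_m\}$.

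It then remains to count. Letting $\mathfrak{A}_m$ be the family of all $\mathcal{A}_{Y,\mathcal{E}}$ coming from pairs $(Y,\mathcal{E})$ as above, the previous step gives $\mathcal{R}_m \subseteq \bigcup_{\mathcal{A} \in \mathfrak{A}_m} \mathcal{P}(\Delta_m^{(0)},\mathcal{A})$. By Lemma~\ref{lem:polynomial-bounded} the number of cells of $\Delta_m$ is polynomially bounded in $t_m$, hence so are $\abs{\mathcal{P}_n(\Delta_m)}$ and $\abs{\Ch(\Delta_m)}^r$, and therefore $\abs{\mathfrak{A}_m} \le \varepsilon(t_m)$ for a polynomial (in particular subexponential) $\varepsilon$. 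Moreover each $\mathcal{A} \in \mathfrak{A}_m$ has $\abs{\mathcal{A}} = m$, consists of pairwise disjoint subsets of $\Delta_m^{(0)}$ of size $\le N$, and the hypothesis on $t_m$ supplies fixed $\lambda > 0$ and $c$ with $\lambda\abs{\mathcal{A}} + c = \lambda m + c \ge t_m$. Corollary~\ref{cor:sequence-disjoint-sets}, applied to the sequence of vertex sets $\Delta_m^{(0)}$, then shows $\abs{\mathcal{R}_m}/\abs{\mathcal{P}(\Delta_m)}$ is exponentially decreasing, and adding the two exponentially decreasing contributions finishes the proof.

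The hard part is the middle step: converting ``$X$ fails the covering property for $Y$'' into ``$X$ misses every member of a fixed family of pairwise disjoint vertex sets''. This is what forces the reduction to pure-dimensional $X$ (so that $Y$ can be covered by chambers of $X$, keeping all their vertices inside $X$), and it is where the near-disjointness output of Theorem~\ref{thm:disjoint-unions-of-apartments} is essential, since the unions of apartments $\bigcup_{E}\conv(E,D_j)$ always share the common core $\bigcup_E E$ and must be trimmed before Corollary~\ref{cor:sequence-disjoint-sets} can be invoked. One must also track carefully how the number of chambers needed to cover $Y$ — governed by the maximal number of maximal simplices of a flag complex on $\le n$ vertices — enters the thickness threshold required by Theorem~\ref{thm:disjoint-unions-of-apartments}.
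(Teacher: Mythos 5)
Your proposal shares the paper's overall architecture --- split off the non-pure complexes via Proposition~\ref{prop:random-subcomplexes-are-chamber-1}, extract pairwise disjoint vertex sets from Theorem~\ref{thm:disjoint-unions-of-apartments} that a bad $X$ must miss, and close with Corollary~\ref{cor:sequence-disjoint-sets} --- but the counting step contains a genuine error.

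You invoke Theorem~\ref{thm:disjoint-unions-of-apartments} with $l \defeq m$, so each $\mathcal{A}\in\mathfrak{A}_m$ has exactly $m$ blocks, and then claim that ``the hypothesis on $t_m$ supplies fixed $\lambda>0$ and $c$ with $\lambda m + c \ge t_m$.'' This reads inequality~\eqref{eq:k-chamber-contractible} backwards. That inequality is a \emph{lower} bound on $t_m$ in terms of $m$: with $C \defeq (n^2c_d)(1+(d+1)^2n^2)$ it says $t_m > C\,m + C\,n(d+1)$, and the sequence $(t_m)$ is free to grow far faster --- nothing rules out $t_m = 2^m$, say. For such a sequence no fixed $\lambda, c$ can satisfy $\lambda m + c \ge t_m$, so Corollary~\ref{cor:sequence-disjoint-sets} cannot be applied. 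Concretely, your union bound gives $\abs{\mathcal{R}_m}/\abs{\mathcal{P}(\Delta_m)} \le \varepsilon(t_m)\,\delta_0^{\,m}$ with $\varepsilon$ polynomial in $t_m$ and $\delta_0 < 1$; if $t_m$ grows superlinearly in $m$, the prefactor $\varepsilon(t_m)$ dwarfs $\delta_0^{\,m}$ and the estimate is vacuous. The paper avoids this by setting $l = \tau_m$, the \emph{maximal} integer allowed by the thickness inequality. Maximality forces $\tau_m$ to grow linearly in $t_m$, which is exactly what yields fixed $\lambda, c$ with $\lambda\tau_m + c \ge t_m$; the displayed hypothesis~\eqref{eq:k-chamber-contractible} is then used only to guarantee $\tau_m \ge m$ (and $t_m > m$, needed in Lemma~\ref{lem:exponentially-decreasing}). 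You must use as many blocks per $\mathcal{A}$ as the thickness permits, not merely $m$.

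A secondary concern, which you raise but do not settle: you cover the witness $Y$ by one chamber per maximal simplex of $Y$, so $\abs{\mathcal{E}} \le r(n)$ where $r(n)$ can be exponentially larger than $n$; yet the threshold~\eqref{eq:k-chamber-contractible} is calibrated for sets of at most $n$ chambers, matching the paper's choice $\mathcal{E}\in\mathcal{P}_n(\Ch(\Delta_m))$. Feeding sets of size $r(n)$ into Theorem~\ref{thm:disjoint-unions-of-apartments} replaces $C$ by the larger constant $(r(n)^2c_d)(1+(d+1)^2r(n)^2)$, which~\eqref{eq:k-chamber-contractible} does not supply. One could absorb this change of constants into the ``all but finitely many $m$'' allowance built into exponential decrease, but this must be made explicit, and in any case the $l = \tau_m$ correction above is still required.
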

\begin{proof}
For every $m \in \N$ let $\tau_m \in \N$ be maximal with
\[
t_m > (n(d+1) + \tau_m) \cdot (n^2 \cdot c_d)(1+(d+1)^2n^2).
\]
Then we can choose some constants $\lambda, c \in \N$ with $\lambda \tau_m + c \geq t_m$ for all $m \in \N$.
Let $I_m \defeq \mathcal{P}_n(\Ch(\Delta_m))$.
From Theorem~\ref{thm:disjoint-unions-of-apartments} we know that for every $\mathcal{E} \in I_m$, there are $\tau_m$ chambers $D_{\mathcal{E},1},\ldots,D_{\mathcal{E},\tau_m} \in \Opp_{\Delta_m}(\mathcal{E})$ such that the complexes
\[
B_{i,\mathcal{E}}^{(m)} \defeq \bigcup_{E \in \mathcal{E}} \conv(E,D_{\mathcal{E},i})
\]
satisfy
\[
B_{i,\mathcal{E}}^{(m)} \cap B_{j,\mathcal{E}}^{(m)} = \bigcup_{E \in \mathcal{E}} E
\]
for $1 \leq i \neq j \leq \tau_m$.
Let $A_{i,\mathcal{E}}^{(m)}$ denote the set of vertices in $B_{i,\mathcal{E}}^{(m)}$ that are not contained in $\bigcup \limits_{E \in \mathcal{E}} E$.
Note that $A_{1,\mathcal{E}}^{(m)},\ldots,A_{\tau_m,\mathcal{E}}^{(m)}$ are pairwise disjoint and that the cardinality of each $A_{i,\mathcal{E}}^{(m)}$ satisfies
\[
\abs{A_{i,\mathcal{E}}^{(m)}}
\leq \abs{\bigcup_{E \in \mathcal{E}} \conv(E,D_{\mathcal{E},i})}
\leq n c_d,
\]
which does not depend on $i,\mathcal{E}$ or $m$.
Let $N \defeq nc_d$.
In order to apply Corollary~\ref{cor:sequence-disjoint-sets}, we consider the sets
\[
\mathcal{A}_{\mathcal{E}}^{(m)} \defeq \Set{A_{i,\mathcal{E}}^{(m)}}{1 \leq i \leq \tau_m}  \subseteq \mathcal{P}_{N}(\Delta_m)
\]
for every $\mathcal{E} \in I_m$.
From Lemma~\ref{lem:polynomial-bounded} we know that
the number of cells in $\Delta_m$ is bounded above by a polynomial in $t_m$.
Since $\abs{I_m}$ is bounded above by a polynomial in $\abs{\Delta_m}$ it follows that $\abs{I_m}$ is a also bounded above by a polynomial in $t_m$. 
In particular we see that the cardinality of
\[
\mathfrak{A}^{(m)} \defeq \Set{\mathcal{A}_{\mathcal{E}}^{(m)}}{\mathcal{E} \in I_m} \subseteq \mathcal{P}(\mathcal{P}_{N}(\Delta_m))
\]
is bounded above by a subexponential function in $t_m$.
Recall that $\mathcal{P}(\Delta_m^{(0)},\mathcal{A}_{\mathcal{E}}^{(m)})$ denotes the set of subsets $Z \subseteq \Delta_m^{(0)}$ with $A_{i,\mathcal{E}}^{(m)} \nsubseteq Z$ for all $1 \leq i \leq \tau_m$.
Since
\[
\lambda \abs{\mathcal{A}_{\mathcal{E}}^{(m)}} + c = \lambda \tau_m + c \geq t_m
\]
for every $m \in \N$, we can now apply Corollary~\ref{cor:sequence-disjoint-sets} to deduce that the function
\begin{equation}\label{eq:k-chamber-contractible2}
h \colon \N \rightarrow \R_{\geq 0},\ m \mapsto \abs{\bigcup_{\mathcal{A}_{\mathcal{E}}^{(m)} \in \mathfrak{A}^{(m)}} \mathcal{P}(\Delta_m,\mathcal{A}_{\mathcal{E}}^{(m)})} \cdot \abs{\mathcal{P}(\Delta_m)}^{-1}
\end{equation}
is exponentially decreasing.

Consider now an induced subcomplex $X \subseteq \Delta_m$ that satisfies $X \notin \mathcal{P}(\Delta_m,\mathcal{A}_{\mathcal{E}}^{(m)})$ for every $\mathcal{A}_{\mathcal{E}}^{(m)} \in \mathfrak{A}^{(m)}$.
Suppose that every maximal simplex in $X$ is a chamber.
Then for every $Y \in \mathcal{P}_n(X)$ there is some $\mathcal{E} \in I_m$ with $Y \subseteq \bigcup \limits_{E \in \mathcal{E}} E \subseteq X$ and we can find an index $1 \leq i \leq \tau_m$ such that $A_{i,\mathcal{E}}^{(m)}$ is contained in the vertex set of $X$.
Together this tells us that all the vertices of $B_{i,\mathcal{E}}^{(m)}$ are contained in $X$.
As $X$ is an induced subcomplex of $\Delta_m$ it follows that the whole subcomplex $B_{i,\mathcal{E}}^{(m)} = \bigcup_{E \in \mathcal{E}} \conv(E,D_{\mathcal{E},i})$ lies in $X$.
Since $D_{\mathcal{E},i}$ lies in $\Opp_{\Delta_m}(\mathcal{E})$, each convex hull $\conv(E,D_{\mathcal{E},i})$ is an apartment $\Sigma_{E}$ in $\Delta$ with $\op_{\Sigma_E}(D_{\mathcal{E},i}) = E$.
Thus we see that $X$ satisfies the $n$-covering property.
It therefore follows that every $X \in \mathcal{D}_n(\Delta_m)$ either has maximal simplices that are not chambers, or lies in $\mathcal{P}(\Delta_m,\mathcal{A}_{\mathcal{E}}^{(m)})$ for some $\mathcal{A}_{\mathcal{E}}^{(m)} \in I_m$.
Since the sum of two exponentially decreasing functions is exponentially decreasing, the claim follows from~\eqref{eq:k-chamber-contractible2} together with~\eqref{eq:random-subcomplexes-are-chamber-1}.
\end{proof}

\begin{definition}\label{def:spherical}
A $d$-dimensional simplicial complex $X$ is called \emph{$d$-spherical}, or just \emph{spherical}, if $X$ is homotopy equivalent to a non-trivial wedge of $d$-spheres.
\end{definition}

\begin{theorem}[Most subcomplexes are chamber complexes]\label{thm:random-subcomplexes-of-arbitrary-sequences}
Let $d,k \in \N$ and let $(\Delta_n)_{n \in \N}$ be a sequence of finite, uniformly thick, $d$-dimensional Moufang buildings.
Let $\mathcal{A}_n \subseteq \mathcal{P}(\Delta_n)$ denote the subset consisting of non-empty subcomplexes $X \leq \Delta_n$ such that
\begin{enumerate}
\item $X$ is a union of apartments in $\Delta_n$,
\item $X$ is a chamber complex, and
\item Every subcomplex $Y \subseteq X$ with at most $k$ vertices is contained in a $d$-spherical subcomplex $Z \subseteq X$ that can be written as a union of at most $k$ apartments in $\Delta_n$.
\end{enumerate}
Suppose that there are some constants $\lambda, c > 0$ with $\thickness(\Delta_n) \geq \lambda \cdot n - c$ for all but finitely many $n \in \N$.
Then the function $n \mapsto \frac{\abs{\mathcal{P}(\Delta_n) \setminus \mathcal{A}_n}}{\abs{\mathcal{P}(\Delta_n)}}$ is exponentially decreasing.
\end{theorem}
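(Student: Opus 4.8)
The plan is to deduce the statement from the proof of Theorem~\ref{thm:k-chamber-contractible}, run with the covering parameter $k' \defeq \max(k, 2d+2)$ in place of $n$. Recall from that proof the sets $\mathcal{A}_{\mathcal{E}}^{(n)}$ (one for each $\mathcal{E}\in\mathcal{P}_{k'}(\Ch(\Delta_n))$, with $\mathcal{A}_{\mathcal{E}}^{(n)}\subseteq\mathcal{P}_{k'c_d}(\Delta_n)$) and the ``bad'' family
\[
\mathcal{N}_n \defeq \mathcal{P}_{\mathcal{L}}(\Delta_n) \cup \bigcup_{\mathcal{E}} \mathcal{P}(\Delta_n, \mathcal{A}_{\mathcal{E}}^{(n)}).
\]
What the argument in Theorem~\ref{thm:k-chamber-contractible} actually establishes is the following: if $X \notin \mathcal{N}_n$, then every maximal simplex of $X$ has dimension $d$, and for every $Y \in \mathcal{P}_{k'}(X)$ there is a set $\mathcal{E}$ of at most $k'$ chambers of $X$ with $Y \subseteq \bigcup_{E\in\mathcal{E}} E$, together with a single chamber $D$ opposite every $E\in\mathcal{E}$, such that the union of apartments $\bigcup_{E\in\mathcal{E}}\conv(E,D)$ lies in $X$. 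The same proof, through Proposition~\ref{prop:random-subcomplexes-are-chamber-1} and Corollary~\ref{cor:sequence-disjoint-sets}, shows that $n\mapsto \abs{\mathcal{N}_n}\cdot\abs{\mathcal{P}(\Delta_n)}^{-1}$ is exponentially decreasing; here the hypothesis $\thickness(\Delta_n)\ge\lambda n - c$ is all that is needed, since those results bound the proportion in question by a quantity that is exponentially small in $\thickness(\Delta_n)$, and hence — by the linear lower bound — in $n$. (Concretely, in the proof of Theorem~\ref{thm:k-chamber-contractible} one may weaken the standing assumption $t_m\ge m$ to $t_m\ge\lambda m - c$; the maximal $\tau_m$ chosen there still tends to infinity and still satisfies $\lambda_0\tau_m + c_0\ge t_m$ for suitable constants $\lambda_0,c_0$, which is all the proof uses.)

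Next I would check that every non-empty $X \in \mathcal{P}(\Delta_n)\setminus\mathcal{N}_n$ lies in $\mathcal{A}_n$; since $\emptyset\in\mathcal{N}_n$, this yields $\mathcal{P}(\Delta_n)\setminus\mathcal{A}_n\subseteq\mathcal{N}_n$ and hence the theorem. So fix such an $X$. For~(1): applying the property above to $Y = C$ for each chamber $C$ of $X$ — permissible since $\abs{C^{(0)}}=d+1\le k'$, and then $C$ must be one of the chambers in $\mathcal{E}$ (a $d$-simplex contained in a union of $d$-simplices is one of them) — produces an apartment $\conv(C,D_C)\subseteq X$ containing $C$; as every maximal simplex of $X$ is a chamber, $X$ is the union of these apartments. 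For~(2): all maximal simplices have dimension $d$ by the first clause, and given chambers $C,C'$ of $X$, applying the property to $Y = C\cup C'$ — permissible since $\abs{(C\cup C')^{(0)}}\le 2d+2\le k'$, forcing $C,C'\in\mathcal{E}$ — yields a chamber $D$ opposite both with $\conv(C,D)\cup\conv(C',D)\subseteq X$; concatenating a minimal gallery from $C$ to $D$ inside $\conv(C,D)$ with one from $D$ to $C'$ inside $\conv(C',D)$ gives a gallery from $C$ to $C'$ in $X$, so $X$ is a chamber complex. For~(3): given $Y\subseteq X$ with at most $k\le k'$ vertices, the covering set $\mathcal{E}$ may be chosen with $\abs{\mathcal{E}}\le k$ (one chamber per vertex of $Y$), so $Z\defeq\bigcup_{E\in\mathcal{E}}\conv(E,D)$ is a union of at most $k$ apartments of $\Delta_n$, contains $Y$, and is contained in $X$; it remains to verify that $Z$ is $d$-spherical.

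The $d$-sphericity of $Z$ is the one point requiring a genuinely new argument, and I expect it to be the main obstacle. The plan is to prove, by induction on the number $r=\abs{\mathcal{E}}$ of apartments, that a union $\bigcup_{j=1}^{r}\conv(E_j,D)$ of apartments of a finite spherical building, all containing a common chamber $D$, is $(d-1)$-connected; being $d$-dimensional and containing an apartment (a $d$-sphere), it is then homotopy equivalent to a non-trivial wedge of $d$-spheres, i.e.\ $d$-spherical. For the inductive step, write the union as $B_{r-1}\cup\conv(E_r,D)$ with $B_{r-1}=\bigcup_{j<r}\conv(E_j,D)$; the intersection $B_{r-1}\cap\conv(E_r,D)=\bigcup_{j<r}\bigl(\conv(E_j,D)\cap\conv(E_r,D)\bigr)$ is a union, inside the single apartment $\conv(E_r,D)$, of convex chamber subcomplexes each of which is a \emph{proper} subcomplex containing the chamber $D$ and is therefore contractible, and all of whose further intersections are again proper convex subcomplexes of that apartment containing $D$, hence contractible; the nerve lemma then makes $B_{r-1}\cap\conv(E_r,D)$ contractible, and Mayer--Vietoris (with van Kampen when $d\ge 2$) yields both the $(d-1)$-connectivity of $B_{r-1}\cup\conv(E_r,D)$ and $\widetilde{H}_d\ne 0$. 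With $Z$ shown to be $d$-spherical, $X$ satisfies (1)--(3), so $X\in\mathcal{A}_n$, and the proof is complete.
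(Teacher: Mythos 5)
Your proof is correct and follows essentially the same approach as the paper: pass through Proposition~\ref{prop:random-subcomplexes-are-chamber-1} plus Theorem~\ref{thm:k-chamber-contractible} to get the covering property, then show the covering property (together with all maximal simplices being chambers) implies membership in $\mathcal{A}_n$, and finish with the affine-reparametrization observation about exponential decay. There are two small but genuine variations. First, your covering parameter $k' = \max(k, 2d+2)$ differs from the paper's $(k+1)(d+1)$; this is because you invoke the conclusion of Theorem~\ref{thm:k-chamber-contractible} with an explicitly chosen $\mathcal{E}\subseteq\Ch(X)$ (e.g.\ $\{C\}$, $\{C,C'\}$, or a $\leq k$-chamber cover of $Y$), while the paper covers $Y$ by at most $k$ chambers and then applies the $n$-covering property to the larger set $\bigcup_{E\in\mathcal{E}}E$ with $\leq k(d+1)$ vertices. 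Both routes are valid, but your phrasing of what Theorem~\ref{thm:k-chamber-contractible}'s proof ``actually establishes'' should really be the for-all version (for every $\mathcal{E}\subseteq\Ch(X)$ with $\abs{\mathcal{E}}\leq k'$ there exists a suitable $D$), not the there-exists version you wrote; this is what makes your later step ``the covering set $\mathcal{E}$ may be chosen with $\abs{\mathcal{E}}\leq k$'' legitimate, and it does hold because $X\notin\mathcal{P}(\Delta_n,\mathcal{A}_{\mathcal{E}}^{(n)})$ for every $\mathcal{E}\in I_n$, not just one. Second, for the contractibility of $B_{r-1}\cap\conv(E_r,D)$ in the $d$-sphericity induction you use the nerve lemma on the convex pieces $\conv(E_j,D)\cap\conv(E_r,D)$, whereas the paper simply notes that all these pieces deformation retract by geodesics onto a common point $x\in D$, so the union is star-shaped with respect to $x$. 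Your nerve-lemma argument is equally correct (the pieces and all their nonempty intersections are proper convex subcomplexes of the sphere $\conv(E_r,D)$ containing $D$, hence contractible, and the nerve is a simplex). Everything else — including the observation that $d$-dimensionality plus containing an apartment forces $\widetilde{H}_d\neq 0$, and the handling of the thickness hypothesis — matches the paper.
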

\begin{proof}
Let $\mathcal{B}_n \subseteq \mathcal{P}(\Delta_n)$ denote the set of induced non-empty subcomplexes $X \leq \Delta_n$ such that
\begin{enumerate}[(a)]
\item $X$ satisfies the $((k+1) \cdot (d+1))$-covering property and
\item $X$ is a union of chambers of $\Delta_n$.
\end{enumerate}
Our first goal is to show that $\mathcal{B}_n \subseteq \mathcal{A}_n$ for every $n \in \mathbb{N}$.
Let $X \in \mathcal{B}_n$ and let $Y \leq X$ be a subcomplex with at most $k$ vertices.
Then $Y$ can be covered by a set $\mathcal{E} \subseteq \Ch(X)$ consisting of at most $k$ chambers.
As the number of vertices in $\bigcup_{E \in \mathcal{E}} E$ is bounded above by $k(d+1) < (k+1) \cdot (d+1)$, we can find apartments $\Sigma_E \subseteq X$ for $E \in \mathcal{E}$ and a chamber $D \subset \bigcap_{E \in \mathcal{E}} \Sigma_E$ such that $E = \op_{\Sigma_E}(D)$ for every $E \in \mathcal{E}$.
Since $Z \defeq \bigcup_{E \in \mathcal{E}} \Sigma_E$ is a union of apartments that contain a common chamber $D$, it follows that $Z$ is $d$-spherical.
To see this, we argue by induction on the cardinality of $\mathcal{E}$.
If $\abs{\mathcal{E}} = 1$ then the claim follows from the fact that an apartment in a spherical building is a triangulation of a $d$-dimensional sphere.
Suppose now that $Z' \defeq \bigcup_{E' \in \mathcal{E}'} \Sigma_{E'}$ is $d$-spherical for some proper, non-empty subset $\mathcal{E}' \subseteq \mathcal{E}$ and let $E \in \mathcal{E} \setminus \mathcal{E}'$.
Then $Z' \cap \Sigma_E =  \bigcup_{E' \in \mathcal{E}'} (\Sigma_{E'} \cap \Sigma_E)$ is a union of proper convex subcomplexes of $\Sigma_E$ containing $D$, each of which can be contracted by geodesics to a point $x \in D$.
Thus $Z' \cap \Sigma_E$ is contractible and it follows from a standard topological gluing lemma that $Z' \cup \Sigma_E$ is $(d-1)$-connected (see e.g.~\cite[10.3]{Bjoerner95}).
Since $Z' \cup \Sigma_E$ is moreover $d$-dimensional, it remains to recall that in this case $Z' \cup \Sigma_E = \bigcup_{E' \in \mathcal{E}' \cup \{E\}} \Sigma_{E'}$ is $d$-spherical (see e.g.~\cite[9.19]{Bjoerner95}).
Thus (3) is satisfied.
Note that (1) is a consequence of (3).
We want to show that $X$ satisfies (2).
To see this, let $C_1,C_2$ be two chambers in $X$.
The number of vertices in $C_1 \cup C_2$ is bounded above by $2(d+1) \leq (k+1) \cdot (d+1)$.
We can therefore find apartments $\Sigma_1,\Sigma_2 \subseteq X$ with $C_1 \subset \Sigma_1$ and $C_2 \subset \Sigma_2$ such that the intersection $\Sigma_1 \cap \Sigma_2$ contains a chamber $D$.
As apartments are gallery connected, there is a gallery $\Gamma_1$ from $C_1$ to $D$ that stays in $\Sigma_1$ and a gallery $\Gamma_2$ from $D$ to $C_2$ that stays in $\Sigma_2$.
By composing these two galleries, we obtain a gallery from $C_1$ to $C_2$ that is contained in $X$.
Thus $X$ is a chamber complex, which finally gives us $\mathcal{B}_n \subseteq \mathcal{A}_n$ for every $n \in \mathbb{N}$.

Now it suffices to prove that $n \mapsto \frac{\abs{\mathcal{P}(\Delta_n) \setminus \mathcal{B}_n}}{\abs{\mathcal{P}(\Delta_n)}}$ is exponentially decreasing.
From Proposition~\ref{prop:random-subcomplexes-are-chamber-1} and Theorem~\ref{thm:k-chamber-contractible} we know that we can choose constants $\lambda_0,c_0 > 0$ such that the function $n \mapsto \frac{\abs{\mathcal{P}(\Delta_n) \setminus \mathcal{B}_n}}{\abs{\mathcal{P}(\Delta_n)}}$ is exponentially decreasing if $\thickness(\Delta_n) > \lambda_0 \cdot n + c_0$ for all $n \in \N$.
Note that the property of being exponentially decreasing is invariant under precomposing with an affine function.
Thus our assumption that $\thickness(\Delta_n) \geq \lambda \cdot n - c$ for all but finitely many $n \in \N$ implies that $n \mapsto \frac{\abs{\mathcal{P}(\Delta_n) \setminus \mathcal{B}_n}}{\abs{\mathcal{P}(\Delta_n)}}$ is indeed exponentially decreasing.
\end{proof}

Recall that $\mathcal{F}_1(L)$ denotes the set of induced subcomplexes of $L$ that are not simply connected.

\begin{corollary}[Most subcomplexes are simply connected]\label{cor:trivial-fundamental-group}
Let $d \geq 2$ and let $(\Delta_n)_{n \in \N}$ be a sequence of finite, uniformly thick, $d$-dimensional Moufang buildings.
Suppose that there are some constants $\lambda, c > 0$ with $\thickness(\Delta_n) \geq \lambda \cdot n - c$ for all but finitely many $n \in \N$.
Then the function $n \mapsto \frac{\abs{\mathcal{F}_1(\Delta_n)}}{\abs{\mathcal{P}(\Delta_n)}}$ is exponentially decreasing.
\end{corollary}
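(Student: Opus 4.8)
The plan is to read the statement off Theorem~\ref{thm:random-subcomplexes-of-arbitrary-sequences}. Apply that theorem with $k=3$: since the corollary assumes $\thickness(\Delta_n)\geq \lambda n-c$ for all but finitely many $n$, its hypotheses are met, so it produces sets $\mathcal{A}_n\subseteq\mathcal{P}(\Delta_n)$ with $n\mapsto \frac{\abs{\mathcal{P}(\Delta_n)\setminus\mathcal{A}_n}}{\abs{\mathcal{P}(\Delta_n)}}$ exponentially decreasing, such that every $X\in\mathcal{A}_n$ is a non-empty chamber complex which is a union of apartments of $\Delta_n$ and in which every subcomplex on at most three vertices is contained in a $d$-spherical subcomplex $Z\subseteq X$ that is a union of at most three apartments. (Fixing $k=3$ is harmless: Theorem~\ref{thm:random-subcomplexes-of-arbitrary-sequences} gives an exponentially decreasing bound for every fixed value of its parameter.) It therefore suffices to show that, for $d\geq 2$, every $X\in\mathcal{A}_n$ is simply connected. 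Granting this, $\mathcal{A}_n\cap\mathcal{F}_1(\Delta_n)=\emptyset$, and since the empty complex already lies in $\mathcal{P}(\Delta_n)\setminus\mathcal{A}_n$ (members of $\mathcal{A}_n$ are non-empty) we get $\mathcal{F}_1(\Delta_n)\subseteq\mathcal{P}(\Delta_n)\setminus\mathcal{A}_n$, whence $\frac{\abs{\mathcal{F}_1(\Delta_n)}}{\abs{\mathcal{P}(\Delta_n)}}$ is exponentially decreasing.

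To prove that $X\in\mathcal{A}_n$ is simply connected, write $X=\bigcup_{j\in J}\Sigma_j$, where $\{\Sigma_j\}_{j\in J}$ is the collection of \emph{all} apartments of $\Delta_n$ contained in $X$ (this equals $X$ since $X$ is a union of some apartments). First, $X$ is connected because it is a non-empty chamber complex, hence gallery connected. For the fundamental group I would run a van Kampen (nerve) argument on the cover $\{\Sigma_j\}_{j\in J}$: each $\Sigma_j$ is a triangulated $d$-sphere, hence simply connected since $d\geq 2$, and each nonempty intersection $\Sigma_i\cap\Sigma_j$ is a convex subcomplex of $\Delta_n$, hence connected. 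Under these conditions $\pi_1(X)\cong\pi_1(\mathcal{N})$ for the nerve $\mathcal{N}$ of the cover, so it remains to see that $\mathcal{N}$ is simply connected. This is where the three-vertex covering property is used: given an edge loop $\Sigma_{a_0},\Sigma_{a_1},\dots,\Sigma_{a_s}=\Sigma_{a_0}$ in $\mathcal{N}$, pick a vertex $w_i$ of $X$ in each consecutive intersection $\Sigma_{a_{i-1}}\cap\Sigma_{a_i}$; applying the covering property to triples of the $w_i$ yields $d$-spherical, hence simply connected, sub-unions of at most three apartments of $X$, and because the apartments of such a sub-union share a common chamber (by the construction in the proof of Theorem~\ref{thm:random-subcomplexes-of-arbitrary-sequences}) they span a simplex of $\mathcal{N}$; these simplices are used to fill the loop, reducing an arbitrary loop of $\mathcal{N}$ to a product of triangles each of which is killed by one such sub-union. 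Alternatively one can bypass the nerve entirely: every edge of $X$ lies in a $d$-simplex (as $X$ is a $d$-dimensional chamber complex), so an edge loop can be homotoped across triangles of $X$ and pushed, using the covering property and the contractibility of the convex intersections involved, into one of the $d$-spherical subcomplexes $Z$, where it bounds.

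The main obstacle is precisely this passage from the local high-connectivity input of Theorem~\ref{thm:random-subcomplexes-of-arbitrary-sequences}(3) to a global statement about $\pi_1(X)$. A purely combinatorial "every small subcomplex sits inside a simply connected subcomplex" is \emph{not} enough: a triangulated pentagon is a chamber complex in which every edge lies in a contractible subcomplex, yet it is not simply connected. So the argument must genuinely use that for $d\geq 2$ the apartments are simply connected and that the convex intersections governing the gluing are connected (indeed contractible in the relevant cases). I expect the only delicate point to be the bookkeeping with higher intersections in the nerve — making sure enough $2$-cells of $\mathcal{N}$ are present that loops actually bound — which should go through with $k=3$; should a larger fixed $k$ prove convenient, the conclusion is unchanged.
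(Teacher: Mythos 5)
Your approach is genuinely different from the paper's, and as written it has a gap at the key step. The paper's proof does not invoke a nerve at all: it observes that point (3) of Theorem~\ref{thm:random-subcomplexes-of-arbitrary-sequences} (applied with $k=2$) forces a \emph{uniform} bound $L_d$ (depending only on $d$, via $c_d$) on the shortest edge-path distance between any two vertices of $X\in\mathcal{A}_n$, because any two vertices lie in a $d$-spherical subcomplex that is a union of at most two apartments, each with at most $c_d$ cells. It then applies the theorem once with $k = 2L_d+1$, fixes a basepoint $v_0$, and rewrites an arbitrary edge loop $e_1\cdots e_\ell$ up to homotopy as a product of ``triangular'' loops $p_i = q_{i-1}e_i\overline{q}_i$, each with at most $k$ vertices, each therefore sitting inside a $d$-spherical (hence, for $d\geq 2$, simply connected) subcomplex. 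Your closing remark that the conclusion is ``unchanged'' if a larger $k$ is needed gestures at this, but you do not identify what $k$ must be or why a uniform choice exists; that uniformity (coming from $c_d$) is exactly what rescues the argument from the pentagon obstruction you yourself raise.

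The nerve route has an additional, concrete gap. Your cover is by all apartments of $\Delta_n$ lying in $X$, and to fill a loop $\Sigma_{a_0},\dots,\Sigma_{a_s}$ in the nerve you invoke the $k=3$ covering property on triples of vertices $w_i\in\Sigma_{a_{i-1}}\cap\Sigma_{a_i}$. But the $d$-spherical subcomplex $Z$ this produces is a union of (up to three) apartments sharing a common chamber, and \emph{those} apartments need not coincide with any of $\Sigma_{a_{i-1}},\Sigma_{a_i},\Sigma_{a_{i+1}}$. The simplex of $\mathcal{N}$ you get from $Z$'s apartments therefore has no a priori incidence relation with the edges $\{\Sigma_{a_{i-1}},\Sigma_{a_i}\}$ of the loop, so it does not directly fill anything. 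You would also need to check the non-emptiness of triple intersections to apply the nerve lemma in the form used in this paper (Lemma~\ref{lem:nerve} with $k=1$ asks for $m$-fold intersections to be $(2-m)$-connected). None of this is fatal in principle, but it is genuinely unresolved in your sketch, whereas the paper's short-loop decomposition with the right $k$ closes it cleanly. Your ``alternative'' paragraph (push a loop across triangles into a $Z$) is much closer in spirit to what the paper actually does, but it is not made precise and, with $k=3$ fixed, cannot work: a loop through many vertices has no reason to fit inside a single $Z$.
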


\begin{proof}
Note that from the third point in Theorem~\ref{thm:random-subcomplexes-of-arbitrary-sequences} it not only follows that every complex $X \in \mathcal{A}_n$ is path-connected, but also that there is a uniform upper bound $L_d$ on the length of a shortest edge-path between two vertices in $X$.
We want to apply Theorem~\ref{thm:random-subcomplexes-of-arbitrary-sequences} for $k = 2L_d + 1$.
Being a simplicial complex, $X$ is simply connected if every cyclic edge path in $X$ is nullhomotopic.
Let $p = e_1e_2 \ldots e_{\ell}$ be such a cyclic edge path starting at $v_0$.
For each $1 \leq i < \ell$ let $v_i$ denote the terminal vertex of $e_i$ and let $q_i$ be a path from $v_0$ to $v_i$ of length at most $L_d$.
Up to homotopy we have
\[
p = e_1 \overline{q}_1 q_1 e_2 \overline{q}_2 q_2 \ldots e_{\ell-1} \overline{q}_{\ell-1} q_{\ell-1}  e_{\ell}
= (e_1 \overline{q}_1)
(q_1 e_2 \overline{q}_2) \cdots
(q_{\ell-2} e_{\ell-1} \overline{q}_{\ell-1}) (q_{\ell-1} e_{\ell}),
\]
where $\overline{q}_{i}$ denotes the inverse path of $q_i$.
If follows that $p$ can be written as a product of cyclic paths of the form $p_i = q_{i-1} e_{i} \overline{q}_{i}$ for $1 \leq i \leq \ell$ where we write $q_0$ and $q_{\ell}$ to denote the trivial path.
By construction each $p_i$ contains at most $k$ vertices, so point (3) of Theorem~\ref{thm:random-subcomplexes-of-arbitrary-sequences} tells us that $p_i$ is contained in a simply connected subcomplex of $X$.
Thus each $p_i$ represents the trivial element in $\pi_1(X,v_0)$ which proves the claim.
\end{proof}

\section{Buildings of type $A_n$}\label{sec:buildings-An}

In this section we restrict our attention to buildings of type $A_n$.
In this case we will be able to show some results on the higher connectivity properties of random subcomplexes of such buildings.
For the rest of this section we fix a sequence $(p_n)_{n \in \N}$ of ascending primes.
For $k\in\N$, consider the vector space $V_{k,n} \defeq \Field_{p_n}^{k+1}$ and the corresponding building $\Delta_{k,n} \defeq A(V_{k,n})$ of type $A_k$.

\begin{notation}\label{not:link-without-hyperplane}
Let $\mathcal{L}_{k,n} \subseteq \mathcal{P}(\Delta_{k,n})$ denote the set of induced subcomplexes $X \leq \Delta_{k,n}$ such that there is some $U \in X^{(0)}$ (i.e., some $0\ne U< V_{k,n}$) with $\dim(U) < k$ that is not contained in any hyperplane $H < V_{k,n}$ with $H \in X^{(0)}$.
\end{notation}

\begin{lemma}\label{lem:most-links-have-hyperplanes}
Let $k \in \N$.
The function
\[
n \mapsto \frac{\abs{\mathcal{L}_{k,n}}}{\abs{\mathcal{P}(\Delta_{k,n})}}
\]
is exponentially decreasing.
\end{lemma}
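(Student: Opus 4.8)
The plan is to bound $\mathcal{L}_{k,n}$ from above by a union, indexed by the possible ``offending'' vertices, of avoidance sets of exactly the shape handled by Corollary~\ref{cor:sequence-disjoint-sets}. First I would dispose of the trivial case $k=1$: a vertex of $\Delta_{1,n}$ is a line in $\Field_{p_n}^{2}$ and there is no vertex $U$ with $\dim(U)<1$, so $\mathcal{L}_{1,n}=\emptyset$ and the claim is vacuous. Assume $k\ge 2$ from now on.

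Next I would record the one combinatorial input. Fix $n$ and set $t_n\defeq\thickness(\Delta_{k,n})$. In a building of type $A_k$ over $\Field_{p_n}$ every panel is contained in exactly $p_n+1$ chambers, so $t_n=p_n+1$; and since $(p_n)_{n\in\N}$ is a strictly increasing sequence of primes, $t_n\ge n$ for every $n$. Now let $U<V_{k,n}$ be any subspace with $1\le\dim(U)\le k-1$. Hyperplanes of $V_{k,n}$ containing $U$ correspond bijectively to hyperplanes of the quotient $V_{k,n}/U$, which has dimension $k+1-\dim(U)\ge 2$; hence the number $\abs{\mathcal{H}_U}$ of hyperplanes of $V_{k,n}$ containing $U$ equals $\frac{p_n^{\,k+1-\dim(U)}-1}{p_n-1}\ge p_n+1=t_n$.

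Then I would feed this into Corollary~\ref{cor:sequence-disjoint-sets} with $X_n\defeq\Delta_{k,n}^{(0)}$ and $N\defeq 1$. For each admissible $U$ put $\mathcal{A}_U\defeq\Set{\{H\}}{H\in\mathcal{H}_U}\subseteq\mathcal{P}_1(X_n)$; its members are pairwise disjoint singletons and $\abs{\mathcal{A}_U}=\abs{\mathcal{H}_U}\ge t_n$, so the requirement $\lambda\abs{\mathcal{A}_U}+c\ge t_n$ holds with $\lambda=1$, $c=0$. Let $\mathfrak{A}_n\defeq\Set{\mathcal{A}_U}{U<V_{k,n},\ 1\le\dim(U)\le k-1}$; this is non-empty (lines qualify since $k\ge 2$), and by Lemma~\ref{lem:polynomial-bounded} with $d=k-1$ the number of cells of $\Delta_{k,n}$, hence $\abs{\mathfrak{A}_n}$, is at most $q_{k-1}(t_n)$ for the polynomially bounded (thus subexponential) function $q_{k-1}$. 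By construction $\mathcal{P}(X_n,\mathcal{A}_U)$ is the set of induced subcomplexes of $\Delta_{k,n}$ containing none of the hyperplanes in $\mathcal{H}_U$; and if $X\in\mathcal{L}_{k,n}$ is witnessed by a vertex $U$ with $\dim(U)<k$ lying in no hyperplane of $X^{(0)}$, then, discarding the harmless extra condition $U\in X^{(0)}$, we get $X\in\mathcal{P}(X_n,\mathcal{A}_U)$. Hence $\mathcal{L}_{k,n}\subseteq\bigcup_{\mathcal{A}\in\mathfrak{A}_n}\mathcal{P}(X_n,\mathcal{A})$, and Corollary~\ref{cor:sequence-disjoint-sets}, applied with $\varepsilon=q_{k-1}$, $\lambda=1$, $c=0$, $N=1$ and the sequence $(t_n)$, gives that $n\mapsto\abs{\mathcal{L}_{k,n}}\cdot\abs{\mathcal{P}(\Delta_{k,n})}^{-1}$ is exponentially decreasing.

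I do not anticipate a genuine obstacle. The two things that need care are the elementary subspace count $\abs{\mathcal{H}_U}\ge t_n$ — which uses precisely that $\dim(U)\le k-1$ forces $V_{k,n}/U$ to have dimension at least $2$ — and the observation that the membership clause ``$U\in X^{(0)}$'' in the definition of $\mathcal{L}_{k,n}$ may simply be dropped when passing to the upper bound, so that the resulting union is literally of the avoidance form to which Corollary~\ref{cor:sequence-disjoint-sets} applies.
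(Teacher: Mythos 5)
Your proof is correct and follows essentially the same route as the paper's: bound the number of hyperplanes containing a non-hyperplane vertex $U$ below by $p_n+1=t_n$, package the singleton hyperplanes into the sets $\mathcal{A}_U$, bound $\abs{\mathfrak{A}_n}$ polynomially via Lemma~\ref{lem:polynomial-bounded}, and apply Corollary~\ref{cor:sequence-disjoint-sets}. Your explicit handling of $k=1$ (where $\mathcal{L}_{1,n}=\emptyset$ and the statement is vacuous) is a small but welcome tidiness, since Corollary~\ref{cor:sequence-disjoint-sets} formally assumes $\mathfrak{A}_n\neq\emptyset$.
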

\begin{proof}
Let $\mathcal{H}_{k,n} \subseteq \Delta_{k,n}^{(0)}$ denote the subset of vertices that are given by hyperplanes of $V_{k,n}$.
Let $\mathcal{H}_{k,n}^c \defeq \Delta_{k,n}^{(0)} \setminus \mathcal{H}_{k,n}$ denote the complement of this set.
Note that for every $U \in \mathcal{H}_{k,n}^c$ there are at least $p_n+1 > n$ hyperplanes $H < V$ containing $U$.
Indeed, by fixing a $(k-1)$-dimensional subspace $U \leq W < V_{k,n}$, we see that the hyperplanes $H < V_{k,n}$ containing $W$, and hence $U$, correspond to the non-trivial, proper subspaces of $V/W \cong \mathbb{F}_{p_n}^2$.
Thus the number of hyperplanes $H < V_{k,n}$ containing $U$ is bounded below by the cardinality of the $1$-dimensional projective space over $\mathbb{F}_{p_n}$, which is $p_n+1$.
This tells us that the cardinality of $\mathcal{H}_{\Delta_{k,n}}(U) \defeq \lk_{\Delta_{k,n}}(U) \cap \mathcal{H}_{k,n}$ is bounded below by $n$.
In order to apply Corollary~\ref{cor:sequence-disjoint-sets} we consider the sets $\mathcal{A}_U \defeq \Set{\{H\} \in \mathcal{P}(\Delta_{k,n}^{(0)})}{H \in \mathcal{H}_{\Delta_{k,n}}(U)}$ and
$\mathfrak{A}_{k,n} \defeq \Set{\mathcal{A}_U}{U \in \mathcal{H}_{k,n}^c}$.
From Lemma~\ref{lem:polynomial-bounded} we know that $\abs{\mathcal{H}_{k,n}^c}$ is bounded above by a polynomial in $t_n \defeq \thickness(\Delta_{k,n}) = p_n + 1 > n$.
Since the elements of $\mathcal{A}_U$ are pairwise disjoint and have uniformly bounded cardinality, we can apply Corollary~\ref{cor:sequence-disjoint-sets} to deduce that
\[
h \colon \N \rightarrow \R_{\geq 0},\ n \mapsto \abs{\bigcup_{\mathcal{A}_U \in \mathfrak{A}_{k,n}} \mathcal{P}(\Delta_{k,n},\mathcal{A}_U)} \cdot \abs{\mathcal{P}(\Delta_{k,n})}^{-1}
\]
is an exponentially decreasing function.
Recall from Notation~\ref{not:avoidance-number} that $\mathcal{P}(\Delta_{k,n},\mathcal{A}_U)$ denotes the set of induced subcomplexes $X \leq \Delta_{k,n}$ with $A \nsubseteq X$ for every $A \in \mathcal{A}_U$.
Note that this just means that $X \cap \mathcal{H}_{\Delta_{k,n}}(U) = \emptyset$.
Since for every non-empty $X \in \mathcal{L}_{k,n}$ there is some $U \in \mathcal{H}_{k,n}^c$ with $X \cap \mathcal{H}_{\Delta_{k,n}}(U) = \emptyset$, it follows that $\mathcal{L}_{k,n}$ is contained in $\bigcup_{\mathcal{A}_U \in \mathfrak{A}_{k,n}} \mathcal{P}(\Delta_{k,n},\mathcal{A}_U)$.
Now the claim follows because $h$ is exponentially decreasing.
\end{proof}

\begin{notation}\label{not:intersection-hyperplanes}
Let $X \leq \Delta_{k,n}$ be an induced subcomplex.
For every hyperplane $H < V_{k,n}$ we define $X_H \defeq X \cap \st_{\Delta_{k,n}}(H)$.
More generally, if $\mathcal{H}$ is a set of hyperplanes in $V_{k,n}$, we write $X_{\mathcal{H}} \defeq \bigcap \limits_{H \in \mathcal{H}} X_H$.
\end{notation}

\begin{lemma}\label{lem:covering-with-hyperplanes}
Every complex $X \in \mathcal{P}(\Delta_{k,n}) \setminus \mathcal{L}_{k,n}$ can be covered by its subcomplexes $X_H$, where $H$ runs over the set of hyperplanes in $V_{k,n}$ with $H \in X^{(0)}$.
\end{lemma}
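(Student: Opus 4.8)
The plan is to reduce the statement to an elementary fact about flags of subspaces. Since each $X_H = X \cap \st_{\Delta_{k,n}}(H)$ is by construction a subcomplex of $X$, it suffices to show that every simplex of $X$ is contained in $X_H$ for at least one hyperplane $H < V_{k,n}$ that is a vertex of $X$; this gives $X = \bigcup_H X_H$ with $H$ ranging over such hyperplanes.

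The first step is to describe $\st_{\Delta_{k,n}}(H)$ explicitly when $H$ is a hyperplane. A simplex of $\Delta_{k,n}$ is a flag of nontrivial proper subspaces of $V_{k,n}$, and a flag $\tau$ lies in $\st_{\Delta_{k,n}}(H)$ exactly when $\tau \cup \{H\}$ is again a flag, i.e.\ when every subspace occurring in $\tau$ is comparable to $H$. As $H$ has codimension $1$, the only proper subspace of $V_{k,n}$ containing $H$ is $H$ itself, so comparability with $H$ amounts to containment in $H$. Hence $X_H$ consists precisely of those simplices of $X$ all of whose vertices are subspaces of $H$; equivalently, a flag $\sigma = (U_1 \subsetneq \cdots \subsetneq U_r)$ of $X$ lies in $X_H$ if and only if its top term $U_r$ satisfies $U_r \subseteq H$.

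With this identification, I would take an arbitrary simplex $\sigma$ of $X$; if $\sigma = \emptyset$ there is nothing to prove, so assume $\sigma$ has a top term $W \in X^{(0)}$, a nontrivial proper subspace of $V_{k,n}$. If $\dim(W) = k$, then $W$ is itself a hyperplane lying in $X^{(0)}$, and $\sigma \in X_W$ by the description above. If $\dim(W) < k$, then I would invoke $X \notin \mathcal{L}_{k,n}$, which by Notation~\ref{not:link-without-hyperplane} says precisely that every vertex $U$ of $X$ with $\dim(U) < k$ is contained in some hyperplane $H < V_{k,n}$ with $H \in X^{(0)}$; applying this to $U = W$ produces such an $H$ with $W \subseteq H$, so that $\sigma \in X_H$. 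In either case $\sigma$ is covered by some $X_H$ with $H$ a hyperplane vertex of $X$, which proves the lemma. The only point needing care is the concrete identification of $X_H$ in the second step; once that is in place the statement is a direct unwinding of the definition of $\mathcal{L}_{k,n}$, so there is no substantial obstacle.
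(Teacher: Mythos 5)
Your proof is correct and follows essentially the same route as the paper's: take a simplex, focus on its maximal (top-dimensional) vertex, and either observe it is itself a hyperplane or use $X \notin \mathcal{L}_{k,n}$ to enlarge the flag by a hyperplane vertex of $X$. The only difference is cosmetic: you first give the explicit description of $X_H$ as the flags with top term contained in $H$, whereas the paper argues directly by joining the simplex to $H$; both steps encode the same observation.
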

\begin{proof}
Let $\sigma \leq X$ be a simplex with vertex set $\{U_1,\ldots,U_m\}$.
Without loss of generality we may assume that $U_1 < \ldots < U_m$.
If $U_m$ is a hyperplane in $V_{k,n}$ then we are done.
Otherwise the assumption $X \notin \mathcal{L}_{k,n}$ tells us that there is a hyperplane $H < V_{k,n}$ with $H \in X^{(0)}$ that contains $U_m$.
In this case $H$ contains the other vertices $U_i$ as well, so the simplex $\tau$ with the vertex set $\{U_1,\ldots,U_m,H\}$ lies in $X$, which proves the claim.
\end{proof}

\begin{lemma}\label{lem:intersecting-stars}
Let $\mathcal{H}$ be a non-empty set of $m < k$ hyperplanes in $V_{k,n}$.
Then $(\Delta_{k,n})_{\mathcal{H}} \cong \Delta_{l,n} \ast \pt$ for some $k - m \leq l < k$, where $\pt$ denotes the graph consisting of a single vertex.
\end{lemma}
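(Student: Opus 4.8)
The plan is to show that $(\Delta_{k,n})_{\mathcal{H}}$ is, on the nose, the flag complex of the poset of all non-trivial subspaces of $W \defeq \bigcap_{H \in \mathcal{H}} H$, and then to recognize this flag complex as a cone. First I would record the elementary linear algebra: since $W$ is an intersection of the $m$ distinct hyperplanes in $\mathcal{H}$, its codimension $r$ in $V_{k,n}$ satisfies $1 \le r \le m$ (the lower bound because $W \subseteq H \subsetneq V_{k,n}$ for any $H \in \mathcal{H}$, the upper bound because intersecting with one further hyperplane drops the codimension by at most $1$). Combined with the hypothesis $m < k$, this gives $2 \le \dim W \le k$; in particular $W$ is itself a non-trivial proper subspace of $V_{k,n}$, hence a vertex of $\Delta_{k,n}$.

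Next I would work out the vertices and simplices of $(\Delta_{k,n})_{\mathcal{H}} = \bigcap_{H \in \mathcal{H}} \st_{\Delta_{k,n}}(H)$. A vertex $U$ of $\Delta_{k,n}$ lies in $\st_{\Delta_{k,n}}(H)$ precisely when $U$ and $H$ are $\subseteq$-comparable, and since $U$ is a proper subspace this forces $U \subseteq H$; intersecting over $\mathcal{H}$, the vertex set of $(\Delta_{k,n})_{\mathcal{H}}$ is exactly $\{U : 0 \ne U \subseteq W\}$, which contains $W$ itself. Then I would observe that \emph{every} chain $\sigma$ of such subspaces is already a simplex of $(\Delta_{k,n})_{\mathcal{H}}$: each member of $\sigma$ is contained in $W \subseteq H$ for every $H \in \mathcal{H}$, so $\sigma \cup \{H\}$ is still a chain and $\sigma$ is joinable to $H$ in $\Delta_{k,n}$. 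Hence $(\Delta_{k,n})_{\mathcal{H}}$ is precisely the flag complex of the poset of all non-trivial subspaces of $W$.

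Finally, $W$ is comparable with (indeed contains) every vertex of $(\Delta_{k,n})_{\mathcal{H}}$, so $W$ is a cone point: $(\Delta_{k,n})_{\mathcal{H}}$ equals the star of $W$ in itself, which is $\lk(W) \ast \{W\}$, and $\lk(W)$ is the flag complex on the non-trivial \emph{proper} subspaces of $W$, i.e.\ $A(W)$. Since $W \cong \Field_{p_n}^{\dim W}$ as a vector space, $A(W) \cong \Delta_{\dim W - 1,\, n}$, so with $l \defeq \dim W - 1$ we get $(\Delta_{k,n})_{\mathcal{H}} \cong \Delta_{l,n} \ast \pt$; the bound $r \le m$ yields $l \ge k - m$, while $r \ge 1$ yields $l \le k-1 < k$ (and $2 \le \dim W$ gives $l \ge 1$, so $\Delta_{l,n}$ is a genuine building). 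The argument is routine once the combinatorial dictionary between subspaces/comparability and simplices/joinability is in place; the only point I would be careful about is the codimension estimate $r \le m$ together with $m < k$, since that is exactly what guarantees $W \ne 0$ and so makes both the cone-point description and the lower bound $l \ge k-m$ valid.
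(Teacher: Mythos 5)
Your proof is correct and follows essentially the same route as the paper's: set $W = \bigcap_{H\in\mathcal{H}} H$, identify the vertex set of $(\Delta_{k,n})_{\mathcal{H}}$ with the non-trivial subspaces of $W$, recognize the complex as $A(W)\ast\{W\}$, and use $\dim W \ge k+1-m \ge 2$. You spell out a few things the paper leaves implicit (the joinability check showing every chain of subspaces of $W$ is actually a simplex of the relative star, and the explicit derivation of the upper bound $l<k$ from $r\ge 1$), but the argument is the same.
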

\begin{proof}
If a vertex $U \in \Delta_{k,n}$ is connected to a hyperplane $H$, then $U$ is a proper subspace of $H$.
By setting $V_{\mathcal{H}} \defeq \bigcap \limits_{H \in \mathcal{H}} H$, we see that the vertex set of $(\Delta_{k,n})_{\mathcal{H}}$ is given by $\Set{U < V_{k,n}}{0 < U \leq V_{\mathcal{H}}}$.
Thus we have $(\Delta_{k,n})_{\mathcal{H}} = A(V_{\mathcal{H}}) \ast \{V_{\mathcal{H}}\}$.
Note that the dimension of $V_{\mathcal{H}}$ is bounded from below by
\[
\dim(V_{\mathcal{H}}) \geq \dim(V_{k,n}) - \abs{\mathcal{H}} = k+1-m \geq 2.
\]
Thus we have
\[
(\Delta_{k,n})_{\mathcal{H}}
= A(V_{\mathcal{H}}) \ast \{V_{\mathcal{H}}\}
= \Delta_{\dim(V_{\mathcal{H}})-1,n} \ast \{V_{\mathcal{H}}\}
= \Delta_{l,n} \ast \{V_{\mathcal{H}}\}
\]
for some $l \geq k - m$.
\end{proof}

In order to prove our main theorem on random subcomplexes of buildings of type $A_n$, we will use the following version of the nerve lemma, which can be found in~\cite[Lemma 2.1]{Bjoerner94}.

\begin{lemma}\label{lem:nerve}
Let $X$ be a simplicial complex and let $\{Y_i\}_{i=1}^n$ be a family of subcomplexes such that $X = \bigcup_{i=1}^{n} Y_i$.
Suppose that there is some $k \in \N_0$ such that every non-empty intersection $Y_{i_1} \cap \ldots \cap Y_{i_m}$ is $(k-m+1)$-connected for each $m \geq 1$.
Then $X$ is $k$-connected if and only if $\mathcal{N}(\{Y_i\}_{i=1}^n)$, the nerve of the covering $\{Y_i\}_{i=1}^n$, is $k$-connected.
\end{lemma}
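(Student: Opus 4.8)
The plan is to avoid the homotopy-colimit machinery and argue in increasing degrees: handle $\pi_0$ and $\pi_1$ by hand, handle the higher homotopy groups homologically via the Mayer--Vietoris spectral sequence of the cover, and glue the two halves together with the Hurewicz theorem. Write $\mathcal{N} \defeq \mathcal{N}(\{Y_i\}_{i=1}^n)$, and for a non-empty face $\sigma$ of $\mathcal{N}$ — i.e.\ $\sigma\subseteq\{1,\dots,n\}$ with $Y_\sigma\defeq\bigcap_{i\in\sigma}Y_i\neq\emptyset$ — note that the hypothesis (applied with $m=\dim(\sigma)+1$) says exactly that $Y_\sigma$ is $(k-\dim(\sigma))$-connected. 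Thus each $Y_i$ is $k$-connected, each pairwise intersection $Y_i\cap Y_j$ occurring in $\mathcal{N}$ is $(k-1)$-connected, and more generally $Y_\sigma$ is highly connected when $\dim(\sigma)$ is small. Throughout, $X\neq\emptyset$ and $\mathcal{N}\neq\emptyset$, so ``$k$-connected'' means nonempty, path-connected, and $\pi_i=0$ for $1\le i\le k$.

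\emph{Low-dimensional steps.} Since every $Y_i$ is connected and $\{i,j\}$ is an edge of $\mathcal{N}$ precisely when $Y_i\cap Y_j\neq\emptyset$, both $\pi_0(X)$ and $\pi_0(\mathcal{N})$ are canonically the quotient of $\{1,\dots,n\}$ by the equivalence relation generated by the edges of $\mathcal{N}$; so $\pi_0(X)\cong\pi_0(\mathcal{N})$, which already settles the case $k=0$. If $k\ge 1$, then every $Y_i$ is simply connected and every $Y_i\cap Y_j$ in $\mathcal{N}$ is connected, while a triple intersection is nonempty exactly when the corresponding $2$-simplex lies in $\mathcal{N}$; the generalized Seifert--van Kampen theorem for a cover by subcomplexes then gives $\pi_1(X)\cong\pi_1(\mathcal{N})$ at compatible basepoints. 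This settles the case $k=1$.

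\emph{Homological step.} The Mayer--Vietoris spectral sequence of the closed cover $\{Y_i\}$ of the finite-dimensional complex $X$ has $E^1_{p,q}=\bigoplus_{\dim(\sigma)=p} H_q(Y_\sigma)\Rightarrow H_{p+q}(X)$ with $d^1$ the simplicial \v{C}ech differential, and being first-quadrant and bounded it converges. For $p+q\le k$ and $q\ge 1$ we have $H_q(Y_\sigma)=0$, since $Y_\sigma$ is $(k-p)$-connected and $q\le k-p$; and for $q=0$, $p\le k$ the space $Y_\sigma$ is connected, so in this range the $q=0$ row is the simplicial chain complex $C_\ast(\mathcal{N})$. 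A short check of $d^1$ on $E^1_{k+1,0}$ — using that each $Y_{\sigma\setminus\{j\}}$ with $\dim(\sigma\setminus\{j\})=k$ is connected, so the component of a chosen point in $Y_\sigma$ is irrelevant — identifies its image with $B_k(\mathcal{N})$, whence $E^2_{p,0}=H_p(\mathcal{N})$ for all $p\le k$, and for degree reasons no higher differential enters or leaves these entries. Therefore $\widetilde{H}_i(X)\cong\widetilde{H}_i(\mathcal{N})$ for all $i\le k$.

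\emph{Assembly.} Assume $k\ge 2$. By the previous steps we may assume $X$ and $\mathcal{N}$ are connected and $\pi_1(X)\cong\pi_1(\mathcal{N})$; if this common group is nontrivial then neither space is $k$-connected and the equivalence holds vacuously, so assume both are simply connected. The Hurewicz theorem then says that $X$ is $k$-connected iff $\widetilde{H}_i(X)=0$ for $2\le i\le k$, and likewise for $\mathcal{N}$; combined with the homological step this gives that $X$ is $k$-connected iff $\mathcal{N}$ is. The main obstacle is the homological step, specifically making fully rigorous the edge-of-range bookkeeping (the identification of the image of $d^1\colon E^1_{k+1,0}\to E^1_{k,0}$); everything else is routine once the generalized van Kampen theorem and the Mayer--Vietoris spectral sequence of a cover are granted (see, e.g.,~\cite{Bjoerner95} for the requisite topological combinatorics). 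An alternative packaging realizes $X$ as the homotopy colimit of $\sigma\mapsto Y_\sigma$ over the face poset of $\mathcal{N}$, notes that the augmentation onto $X$ is a homotopy equivalence because the cover is by subcomplexes, and observes that the projection onto $|\mathcal{N}|$ is $k$-connected since its fibres $Y_\sigma$ are $(k-\dim(\sigma))$-connected; but verifying the latter is a skeletal induction that reproduces essentially the same content, so nothing is gained over the elementary argument above.
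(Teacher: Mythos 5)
The paper does not actually prove this statement — it is a direct citation of Bj\"orner--Lov\'asz--Vre\'cica--\v{Z}ivaljevi\'c~\cite[Lemma~2.1]{Bjoerner94}, whose proof runs through the diagram-of-spaces / homotopy colimit machinery. So there is no in-paper proof to compare against, and your outline is genuinely a different route. Your $\pi_0$ step, your Mayer--Vietoris spectral sequence step (including the correct observation that $E^1_{k+1,0}$ may have extra summands from disconnected $Y_\sigma$ but that its $d^1$-image still equals $B_k(\mathcal{N})$ because the codimension-one faces $Y_{\sigma\setminus\{j\}}$ are connected), and the Hurewicz assembly are all sound.

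The one place where you are too casual is the $\pi_1$ step, and it is, if anything, the \emph{more} delicate part of the argument rather than the ``routine once granted'' part you describe. The hypotheses for $k\ge 1$ give only that each $Y_i$ is simply connected and each nonempty $Y_{ij}$ is path-connected; triple intersections $Y_{ijk}$ are merely required to be nonempty when $\{i,j,k\}$ is a face of $\mathcal{N}$ and may be disconnected. The standard ``generalized Seifert--van Kampen theorem for a cover'' that one finds in textbooks (Hatcher, May) needs all triple intersections to be path-connected for the description of the kernel (and often needs a common basepoint in all pieces), so invoking it as a black box does not cover your situation. The claim $\pi_1(X)\cong\pi_1(\mathcal{N})$ \emph{is} true under your hypotheses, but to make it rigorous you need either the groupoid version of van Kampen (Brown) together with a nontrivial colimit computation, or a direct argument: define a map by subdividing loops along the cover; show well-definedness by, for a $2$-simplex $\{i,j,k\}\in\mathcal{N}$, picking any $z\in Y_{ijk}$ and paths in the connected sets $Y_{ij},Y_{jk},Y_{ik}$ from the chosen base points to $z$ so that the ``triangle loop'' decomposes into loops contained in the simply connected $Y_i,Y_j,Y_k$; and prove injectivity via a Lebesgue-number/subdivision argument on a bounding disc. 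Either way, this is real work, not an appeal to a standard theorem, and it should be spelled out.

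A smaller point: in your ``alternative packaging'' you say the projection $\operatorname{hocolim}\to|\mathcal{N}|$ is $k$-connected because fibres $Y_\sigma$ are $(k-\dim\sigma)$-connected. For the ``if and only if'' you actually need the comparison map to be $(k+1)$-connected (isomorphism on $\pi_i$ for $i\le k$ and epimorphism on $\pi_{k+1}$), not merely $k$-connected; otherwise you only get one direction of the equivalence. This is precisely the bookkeeping the cited reference carries out, and it is not ``essentially the same content'' as the elementary route — it is a genuinely different (and cleaner) mechanism, which is why it is the one used in the literature.
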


We know from Corollary~\ref{cor:trivial-fundamental-group} that for $k\ge 3$ (so $\dim(\Delta_{k,n})\ge 2$) and large enough $n$, ``most'' induced subcomplexes of $\Delta_{k,n}$ are simply connected.
Similarly, it follows from Theorem~\ref{thm:k-chamber-contractible} that for $k\ge 2$ and large enough $n$, ``most'' induced subcomplexes of $\Delta_{k,n}$ are connected.
Finally, it is obvious that for $k\ge 1$ and large enough $n$, ``most'' (i.e., all but one) induced subcomplexes of $\Delta_{k,n}$ are non-empty.
The following theorem concerns higher connectivity properties, and says that for any $k\ge 2$, ``most'' induced subcomplexes of $\Delta_{k,n}$ are $\floor{\frac{k-1}{2}}$-connected.
(Of course when $k=1$ we only get (and would only expect) $(-1)$-connected, not $0$-connected.)
We conjecture that for any $k\in\N$, ``most'' induced subcomplexes of $\Delta_{k,n}$ are $(k-2)$-connected; see Conjectures~\ref{conj:vanishing-homology} and~\ref{conj:vanishing-homology-general} for more precise statements.

Recall from the definition that $\mathcal{F}_m(L)$ denotes the set of induced subcomplexes $X \leq L$ such that $X$ fails to be $m$-connected.

\begin{theorem}[Most subcomplexes are highly connected]\label{thm:good-homology}
Let $k \in \N_{\geq 2}$.
The function
\[
\varphi_{k} \colon \N \rightarrow \R_{\geq 0},\ n \mapsto \frac{\abs{\mathcal{F}_{\floor{\frac{k-1}{2}}}(\Delta_{k,n})}}{\abs{\mathcal{P}(\Delta_{k,n})}}
\]
is exponentially decreasing.
\end{theorem}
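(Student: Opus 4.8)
The plan is to induct on $k$. The cases $k\in\{2,3,4\}$ are already covered by earlier results: for these the target connectivity $\floor{\frac{k-1}{2}}$ is at most $1$, ``most induced subcomplexes are connected'' follows from Theorem~\ref{thm:random-subcomplexes-of-arbitrary-sequences} (a chamber complex is path-connected), and for $k\ge 3$ ``most are simply connected'' is Corollary~\ref{cor:trivial-fundamental-group}; emptiness affects only one subcomplex. So fix $k\ge 5$, set $c\defeq\floor{\frac{k-1}{2}}$, and assume the theorem for all smaller values of $k$ — equivalently for all sequences $A(\Field_{p_n}^{l+1})$ with $l<k$.

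By Lemma~\ref{lem:most-links-have-hyperplanes} we may discard an exponentially small proportion of the $X$ and assume $X\ne\emptyset$ and $X\notin\mathcal{L}_{k,n}$; then by Lemma~\ref{lem:covering-with-hyperplanes} the subcomplexes $X_H\defeq X\cap\st_{\Delta_{k,n}}(H)$, for $H$ ranging over the hyperplane-vertices of $X$, cover $X$. By (the $m=1$ case of) Lemma~\ref{lem:intersecting-stars}, $\st_{\Delta_{k,n}}(H)\cong A(H)\ast\{H\}$, so each $X_H$ is a cone with apex $H$, hence contractible; and for a set $\mathcal{H}$ of $m\ge 2$ hyperplane-vertices of $X$, Lemma~\ref{lem:intersecting-stars} identifies $X_{\mathcal{H}}\defeq\bigcap_{H\in\mathcal{H}}X_H$ with an induced subcomplex of $(\Delta_{k,n})_{\mathcal{H}}\cong\Delta_{l,n}\ast\pt$ for some $l\ge k-m$: a cone (hence contractible) if $V_{\mathcal{H}}\defeq\bigcap_{H\in\mathcal{H}}H$ is a vertex of $X$, and an induced subcomplex of $\Delta_{l,n}$ otherwise. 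We feed the cover $\{X_H\}$ into the nerve lemma (Lemma~\ref{lem:nerve}): $c$-connectedness of $X$ follows from (i) $(c-m+1)$-connectedness of every non-empty $X_{\mathcal{H}}$ with $\abs{\mathcal{H}}=m$, together with (ii) $c$-connectedness of the nerve $\mathcal{N}_X$ of $\{X_H\}$.

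For (i): the cone case is trivial, and for $m\ge c+2$ the required connectivity is $\le -1$, which holds since the intersection is non-empty; so assume $2\le m\le c+1$ and $V_{\mathcal{H}}\notin X^{(0)}$, so that $X_{\mathcal{H}}$ is an induced subcomplex of $\Delta_{l,n}$ with $l\ge k-m\ge k-(c+1)\ge 2$. For a \emph{fixed} family $\mathcal{H}$ of hyperplanes, the vertex set of $X_{\mathcal{H}}$ is recorded by coordinates of $X$ disjoint from those recording ``$\mathcal{H}\subseteq X$'' and ``$V_{\mathcal{H}}\in X$'', so conditioned on $V_{\mathcal{H}}\notin X^{(0)}$ the complex $X_{\mathcal{H}}$ is a uniformly random induced subcomplex of $\Delta_{l,n}$; by the inductive hypothesis it fails to be $\floor{\frac{l-1}{2}}$-connected with probability exponentially small in $n$, and an elementary floor estimate gives $\floor{\frac{l-1}{2}}\ge\floor{\frac{k-m-1}{2}}\ge c-\ceil{m/2}\ge c-m+1$ since $m\ge 2$. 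As the number of such $\mathcal{H}$ is polynomial in $\thickness(\Delta_{k,n})$ by Lemma~\ref{lem:polynomial-bounded}, hence subexponential in $n$, a union bound of the type in Lemma~\ref{lem:exponentially-decreasing} keeps the exceptional proportion exponentially decreasing.

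The main obstacle is (ii). The simplices of $\mathcal{N}_X$ are the sets $\mathcal{H}$ of hyperplane-vertices of $X$ with a vertex of $X$ inside $\bigcap_{H\in\mathcal{H}}H$. Dualizing (so that $A(V_{k,n})\cong A(V_{k,n}^{\ast})$, a hyperplane $H$ becomes a line $x_H$, and the simplex condition becomes ``$\mathrm{span}\{x_H:H\in\mathcal{H}\}$ lies in a subspace dual to a vertex of $X$''), Quillen's fiber lemma should identify $\mathcal{N}_X$ up to homotopy with the order complex of the poset $\mathcal{Q}_X$ of nonzero proper subspaces of $V_{k,n}$ that contain a vertex of $X$ and are contained in a hyperplane-vertex of $X$ — here $X\notin\mathcal{L}_{k,n}$ is exactly what makes the relevant Quillen fibers nonempty full simplices, hence contractible. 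What remains, and is the crux, is to show that $|\mathcal{Q}_X|$ is $c$-connected for all but an exponentially small proportion of $X$: $\mathcal{Q}_X$ is genuinely larger than the vertex poset of $X$, so its connectivity cannot be pulled directly from the inductive hypothesis. The natural route is to cover $\mathcal{Q}_X$ by the intervals $[U,H]$ over pairs $U\subseteq H$ of vertices of $X$ with $\dim H=k$, each isomorphic to the order complex of the proper nonzero subspaces of $H/U$ and hence a highly connected building of type $A$, and to control the overlaps and a dual analogue of $\mathcal{L}_{k,n}$ by the probabilistic machinery of Corollary~\ref{cor:sequence-disjoint-sets}; the fact that we only need connectivity up to $c\approx k/2$ leaves ample slack. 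Combining (i) and (ii) via Lemma~\ref{lem:nerve}, $X$ is $c$-connected outside a finite union of exponentially small sets, completing the induction.
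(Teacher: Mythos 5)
Your overall strategy matches the paper's: induction on $k$, cover $X$ by the hyperplane--stars $X_H$ (after discarding $\mathcal{L}_{k,n}$ via Lemma~\ref{lem:most-links-have-hyperplanes}), and apply the nerve lemma, with (i) handled by cones, Lemma~\ref{lem:intersecting-stars}, and the inductive hypothesis together with a polynomial union bound. That much is sound. (Minor point of difference: the paper's base cases are only $k=2,3$, and it also invokes $\mathcal{G}_{\mathcal{H}}$ for $\abs{\mathcal{H}} = \lfloor k/2\rfloor+2$ where the required connectivity is $-1$, i.e., nonemptiness, but that is a trivial probability estimate.)

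The genuine gap is in your treatment of (ii), which you yourself flag as ``the crux.'' You dualize, invoke Quillen's fiber lemma, and reduce to showing the order complex of a poset $\mathcal{Q}_X$ is $c$-connected, correctly noting that $\mathcal{Q}_X$ is strictly larger than anything the inductive hypothesis directly controls; you then only gesture at a cover by intervals $[U,H]$ and ``probabilistic machinery'' without carrying it out. This step is missing, and the approach is also far harder than needed. The paper's argument is essentially free once (i) is in hand: extend the exponentially-small discard from $m\le c+1$ to $m\le c+2$ (for $m=c+2$ the required condition is just nonemptiness of $X_{\mathcal{H}}$, which one gets by the same induction-plus-union-bound since $\mathcal{F}_{-1}(\Delta_{l,n})$ is a singleton). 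After discarding, $X_{\mathcal{H}}\ne\emptyset$ for \emph{every} $\mathcal{H}\subseteq I_X$ with $\abs{\mathcal{H}}\le c+2$, which means the nerve $\mathcal{N}_X$ contains the full $(c+1)$-skeleton of the simplex on vertex set $I_X$. Since $\pi_j$ of a simplicial complex depends only on its $(j+1)$-skeleton, and the $(c+1)$-skeleton of a simplex is $c$-connected, $\mathcal{N}_X$ is $c$-connected with no further work, and the nerve lemma concludes. You should replace your Quillen/$\mathcal{Q}_X$ analysis with this observation; as written, your proof does not establish the connectivity of the nerve.

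Note also a small internal inconsistency in your text: when dispatching $m\ge c+2$ in (i) you say the required connectivity ``holds since the intersection is non-empty,'' but in (ii) you need to \emph{establish} that nonemptiness (for $m\le c+2$), not assume it. The nerve lemma lets you restrict attention to nonempty intersections when verifying the hypothesis in (i), but the connectivity of the nerve itself is a separate matter that requires you to actually produce the simplices.
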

\begin{proof}
Consider the subset $\mathcal{H}_{k,n} \subseteq \Delta_{k,n}^{(0)}$ of vertices that are given by hyperplanes of $V_{k,n}$.
We will prove the theorem by induction on $k$.
If $k = 2$, then the result follows from Theorem~\ref{thm:k-chamber-contractible}, and for $k = 3$ the result is covered by Corollary~\ref{cor:trivial-fundamental-group}.
Suppose now that the claim holds for some $k > 2$.
We have to prove that $\varphi_{k+1}$ is exponentially decreasing.
To see this, let us formulate sufficient conditions under which an induced subcomplex $X \leq \Delta_{k+1,n}$ is $\floor{\frac{k}{2}}$-connected.
From Lemma~\ref{lem:covering-with-hyperplanes} we know that every $X \in \mathcal{L}_{k,n}^c \defeq \mathcal{P}(\Delta_{k,n}) \setminus \mathcal{L}_{k,n}$ can be written as
\begin{equation}\label{eq:good-covering}
X = \bigcup \limits_{H \in I_X} X_H,
\end{equation}
where $I_X \defeq \mathcal{H}_{k+1,n} \cap X^{(0)}$ is the set of hyperplanes $H < V_{k+1,n}$ that represent a vertex of $X$.
Let $\mathcal{N}_X \defeq \mathcal{N}(\{X_H\}_{H \in I_X})$ be the nerve corresponding to the covering~\eqref{eq:good-covering}.
We want to apply the nerve lemma to deduce that $X$ is $\floor{\frac{k}{2}}$-connected if and only if $\mathcal{N}_X$ is $\floor{\frac{k}{2}}$-connected.
To do so, we have to check that every non-empty $X_{\mathcal{H}}$ with $\abs{\mathcal{H}} = m$ is $(\floor{\frac{k}{2}} - m + 1)$-connected for $m \geq 1$.
Note that for $m > \floor{\frac{k}{2}} + 2$ there is nothing to show.
For $m=1$ we see that $X_H = \st_X(H) = \lk_X(H) \ast \{H\}$ is contractible for $H \in I_X$.
Let us now study the case $2 \leq m \leq \floor{\frac{k}{2}} + 2 < k+1$, where the last inequality comes from our assumption that $k>2$.
In Lemma~\ref{lem:intersecting-stars} we saw that every $m$-element subset $\mathcal{H} \subseteq \mathcal{H}_{k+1,n}$ satisfies $(\Delta_{k+1,n})_{\mathcal{H}} \cong \Delta_{l,n} \ast \pt$ for some $l \in \N$ with
\[
1 \leq k+1 - \left( \floor{\frac{k}{2}} + 2 \right)
\leq k+1-m
\leq l
< k+1.
\]
In particular we have $l \leq k$, so that we can apply our induction hypothesis to deduce that
\[
\frac{\abs{\mathcal{F}_{\floor{\frac{l-1}{2}}}(\Delta_{l,n})}}{\abs{\mathcal{P}(\Delta_{l,n})}}
\]
is exponentially decreasing in $n$.
As $k+1-m \leq l$ and $m \geq 2$ we have
\[
\floor{\frac{k}{2}} - m + 1
= \floor{\frac{k - 2m + 2}{2}}
\leq \floor{\frac{k-m}{2}}
\leq \floor{\frac{l-1}{2}}.
\]
It therefore follows for $m \geq 2$ that
\[
\frac{\abs{\mathcal{F}_{\floor{\frac{k}{2}} - m + 1}(\Delta_{l,n})}}{\abs{\mathcal{P}(\Delta_{l,n})}}
\]
is exponentially decreasing in $n$.
Note that we have
\[
\frac{\abs{\mathcal{F}_{\floor{\frac{k}{2}} - m + 1}((\Delta_{k+1,n})_{\mathcal{H}})}}{\abs{\mathcal{P}((\Delta_{k+1,n})_{\mathcal{H}})}}
=
\frac{\abs{\mathcal{F}_{\floor{\frac{k}{2}} - m + 1}(\Delta_{l,n} \ast \pt)}}{\abs{\mathcal{P}(\Delta_{l,n} \ast \pt)}}
\leq
\frac{\abs{\mathcal{F}_{\floor{\frac{k}{2}} - m + 1}(\Delta_{l,n})}}{\abs{\mathcal{P}(\Delta_{l,n})}},
\]
so that $n \mapsto \frac{\abs{\mathcal{F}_{\floor{\frac{k}{2}} - m + 1}((\Delta_{k+1,n})_{\mathcal{H}})}}{\abs{\mathcal{P}((\Delta_{k+1,n})_{\mathcal{H}})}}$
is exponentially decreasing for every $m \geq 2$ as well.
For every $\mathcal{H} \in I_{k+1,n} \defeq \mathcal{P}_{\floor{\frac{k}{2}}+2}(\mathcal{H}_{k+1,n}) \setminus \mathcal{P}_{1}(\mathcal{H}_{k+1,n})$ of cardinality $m = \abs{\mathcal{H}}$ let $\mathcal{G}_{\mathcal{H}}$ denote the set of induced subcomplexes $X \leq \Delta_{k+1,n}$ for which $X_{\mathcal{H}}$ is not $(\floor{\frac{k}{2}} - m + 1)$-connected.
By definition we have $X_{\mathcal{H}} = X \cap (\Delta_{k+1,n})_{\mathcal{H}}$.
It therefore follows that
\begin{equation}\label{eq:good-covering-2}
\frac{\abs{\mathcal{G}_{\mathcal{H}}}}{\abs{\mathcal{P}(\Delta_{k+1,n})}}
=
\frac{\abs{\mathcal{F}_{\floor{\frac{k}{2}}-m+1}((\Delta_{k+1,n})_{\mathcal{H}})}}{\abs{\mathcal{P}((\Delta_{k+1,n})_{\mathcal{H}})}}.
\end{equation}
As noted above, the right-hand side of~\eqref{eq:good-covering-2} is exponentially decreasing in $n$.
On the other hand, it follows from Lemma~\ref{lem:polynomial-bounded} that $\abs{I_{k+1,n}}$ is bounded above by a polynomial in $n$.
By defining $\mathcal{G}_{k+1,n} \defeq \bigcup_{\mathcal{H} \in I_{k+1,n}} \mathcal{G}_{\mathcal{H}}$
we therefore see that
\[
n \mapsto
\frac{\abs{\mathcal{G}_{k+1,n}}}{\abs{\mathcal{P}(\Delta_{k+1,n})}}
\]
is exponentially decreasing in $n$.
By summarizing the above, we see that every $X \in \mathcal{P}(\Delta_{k+1,n}) \setminus (\mathcal{L}_{k+1,n} \cup \mathcal{G}_{k+1,n})$ can be written as a union
\begin{equation}
X = \bigcup \limits_{H \in I_X} X_H,
\end{equation}
such that for every non-empty subset $\mathcal{H} \subseteq \mathcal{H}_{k+1,n}$ of cardinality $1 \leq \abs{\mathcal{H}} \leq \floor{\frac{k}{2}} + 2$, the intersection
\[
X_{\mathcal{H}} \defeq \bigcap \limits_{H \in \mathcal{H}} X_H
\]
is $(\floor{\frac{k}{2}} - \abs{\mathcal{H}} + 1)$-connected.
The nerve lemma now tells us that $X$ is $\floor{\frac{k}{2}}$-connected if and only if $\mathcal{N}_X$ is $\floor{\frac{k}{2}}$-connected.
Moreover, the above shows that $X_{\mathcal{H}} \neq \emptyset$ for every $\mathcal{H}$ satisfying $\abs{\mathcal{H}} \leq \floor{\frac{k}{2}}+2$, so indeed $\mathcal{N}_X$ is $\floor{\frac{k}{2}}$-connected, and we are done.
\end{proof}

Recall that $\mathcal{T}_m(L)$ denotes the set of induced subcomplexes of $L$ that have trivial $m$th reduced homology.

\begin{theorem}\label{thm:non-vanishing-homology}
For every natural number $k$ we have $\lim \limits_{n \rightarrow \infty} \frac{\abs{\mathcal{T}_{k-1}(\Delta_{k,n})}}{\abs{\mathcal{P}(\Delta_{k,n})}} = 0$.
\end{theorem}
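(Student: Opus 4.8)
The plan is to deduce Theorem~\ref{thm:non-vanishing-homology} from Theorem~\ref{thm:random-subcomplexes-of-arbitrary-sequences} together with one elementary homological observation, treating the degenerate case $k=1$ separately by hand.

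First I would record the observation. Suppose $k\geq 2$ and let $X$ be an induced subcomplex of $\Delta_{k,n}$ that contains an apartment $\Sigma$. Then $\Sigma$ is isomorphic to the Coxeter complex of type $A_k$, hence a triangulation of the $(k-1)$-sphere, and it is $(k-1)$-dimensional; since $\Delta_{k,n}$ is itself $(k-1)$-dimensional, so is $X$, and therefore $C_k(X)=0$ and $\widetilde{H}_{k-1}(X)=Z_{k-1}(X)$. The fundamental cycle of $\Sigma$ is a nonzero element of $C_{k-1}(\Sigma)\subseteq C_{k-1}(X)$ and is a cycle, so it represents a nonzero class in $\widetilde{H}_{k-1}(X)$ (for $k-1\geq 1$ reduced and unreduced homology agree, which is exactly why $k\geq 2$ is needed here). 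In particular $X\notin\mathcal{T}_{k-1}(\Delta_{k,n})$.

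Next, for $k\geq 2$, I would check that $(\Delta_{k,n})_{n}$ satisfies the hypotheses of Theorem~\ref{thm:random-subcomplexes-of-arbitrary-sequences}: each $\Delta_{k,n}$ is finite and $(k-1)$-dimensional, it is uniformly thick of thickness $p_n+1$ (a panel is obtained from a full flag by omitting one intermediate subspace $U_{i-1}\subset U_{i+1}$, and the chambers containing it correspond to the $p_n+1$ lines of $U_{i+1}/U_{i-1}\cong\mathbb{F}_{p_n}^2$), and it is Moufang — automatically for $k\geq 3$ by Tits~\cite{Tits77}, and for $k=2$ because $\Delta_{2,n}$ is the Desarguesian projective plane over $\mathbb{F}_{p_n}$. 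Since $(p_n)$ is strictly ascending we have $\thickness(\Delta_{k,n})=p_n+1\geq n$, so the thickness hypothesis holds with $\lambda=1$, $c=0$. Theorem~\ref{thm:random-subcomplexes-of-arbitrary-sequences} then gives that $n\mapsto \frac{\abs{\mathcal{P}(\Delta_{k,n})\setminus\mathcal{A}_n}}{\abs{\mathcal{P}(\Delta_{k,n})}}$ is exponentially decreasing, where every member of $\mathcal{A}_n$ is a nonempty union of apartments and hence contains an apartment. Combining with the observation above, $\mathcal{T}_{k-1}(\Delta_{k,n})\subseteq\mathcal{P}(\Delta_{k,n})\setminus\mathcal{A}_n$, so $\frac{\abs{\mathcal{T}_{k-1}(\Delta_{k,n})}}{\abs{\mathcal{P}(\Delta_{k,n})}}\to 0$ (in fact exponentially). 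For $k=1$ the complex $\Delta_{1,n}$ is just $p_n+1$ isolated vertices, and an induced subcomplex has trivial reduced $0$th homology exactly when it has at most one vertex, so $\abs{\mathcal{T}_0(\Delta_{1,n})}=p_n+2$ and the proportion visibly tends to $0$.

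I do not expect a serious obstacle, since the statement is essentially a corollary of the already established Theorem~\ref{thm:random-subcomplexes-of-arbitrary-sequences}. The only points demanding care are the verification that the $\Delta_{k,n}$ are Moufang — which for $k=2$ relies on $A(\mathbb{F}_{p_n}^3)$ being Desarguesian, hence Moufang — and the homological bookkeeping in the key observation, namely that the vanishing of $C_k(X)$ forces the fundamental class of any apartment contained in $X$ to survive in $\widetilde{H}_{k-1}(X)$; this is where the hypothesis $k\geq 2$ enters and why $k=1$ must be handled by the separate elementary count.
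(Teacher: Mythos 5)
Your proposal is correct and takes essentially the same route as the paper: cite Theorem~\ref{thm:random-subcomplexes-of-arbitrary-sequences} to get that almost every induced subcomplex $X$ contains an apartment $\Sigma\cong S^{k-1}$, and observe that since $X$ is $(k-1)$-dimensional the fundamental cycle of $\Sigma$ cannot bound, so $\widetilde{H}_{k-1}(X)\ne 0$. You are somewhat more careful than the paper's three-line argument: you spell out the $C_k(X)=0$ reasoning, you explicitly verify the Moufang hypothesis for $k=2$ (Desarguesian projective plane), and you treat $k=1$ by a direct count rather than appealing to a theorem whose building-theoretic hypotheses are degenerate in dimension zero --- all points the paper leaves implicit.
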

\begin{proof}
From Theorem~\ref{thm:random-subcomplexes-of-arbitrary-sequences} we know that there are subsets $\mathcal{A}_{k,n} \subseteq \mathcal{P}(\Delta_{k,n})$ with $\lim \limits_{n \rightarrow \infty} \frac{\abs{\mathcal{A}_{k,n}}}{\abs{\mathcal{P}(\Delta_{k,n})}} = 1$ such that every $X \in \mathcal{A}_{k,n}$ contains an apartment $\Sigma$ of $\Delta_{k,n}$.
Since $\Sigma$ is homeomorphic to a sphere of dimension $\dim(\Delta_{k,n}) = k-1$, it follows that $\widetilde{H}_{k-1}(X) \neq 0$.
Now the claim follows since $\mathcal{T}_{k-1}(\Delta_{k,n}) \subseteq \mathcal{P}(\Delta_{k,n}) \setminus \mathcal{A}_{k,n}$.
\end{proof}

\begin{definition}\label{def:spherical-subcomplexes}
Let $X$ be a $d$-dimensional simplicial complex.
We write $\mathcal{S}(X) \subseteq \mathcal{P}(X)$ to denote the set of $d$-dimensional, spherical subcomplexes of $X$.
\end{definition}

\begin{conjecture}\label{conj:vanishing-homology}
For every natural number $k$ we have $\lim \limits_{n \rightarrow \infty} \frac{\abs{\mathcal{S}(\Delta_{k,n})}}{\abs{\mathcal{P}(\Delta_{k,n})}} = 1$.
\end{conjecture}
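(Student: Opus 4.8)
The plan is to reduce the conjecture to the ``codimension-one'' connectivity statement (the type $A_k$ case of Conjecture~\ref{conj:vanishing-homology-general}, asserting that $n\mapsto\abs{\mathcal{F}_{k-2}(\Delta_{k,n})}/\abs{\mathcal{P}(\Delta_{k,n})}\to 0$) together with the results already established. Indeed, it suffices to show that the proportion of induced subcomplexes $X\leq\Delta_{k,n}$ satisfying all three of \emph{(i)} $\dim(X)=k-1$, \emph{(ii)} $X$ is $(k-2)$-connected, and \emph{(iii)} $\widetilde{H}_{k-1}(X)\neq 0$ tends to $1$ as $n\to\infty$. Any simplicial complex $X$ with \emph{(i)}--\emph{(iii)} lies in $\mathcal{S}(\Delta_{k,n})$: being $(k-2)$-connected it is, by the Hurewicz theorem and the existence of minimal CW models, homotopy equivalent to a CW complex with a single $0$-cell and cells only in dimensions $\geq k-1$; since $\dim(X)\leq k-1$ there are then cells only in dimensions $0$ and $k-1$, all $(k-1)$-cells attached at the base point, so $X$ is homotopy equivalent to a wedge of $(k-1)$-spheres, non-trivial by \emph{(iii)} (as $\widetilde{H}_{k-1}(X)$ is then free abelian of positive rank). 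Since $\{X:\text{\emph{(i)}--\emph{(iii)}}\}\subseteq\mathcal{S}(\Delta_{k,n})$, this gives the conjecture. For $k=1$ the complex $\Delta_{1,n}$ is a set of $p_n+1$ points, $\mathcal{S}(\Delta_{1,n})$ consists of the subsets of size at least $2$, and the claim is immediate; so assume $k\geq 2$.

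Conditions \emph{(i)} and \emph{(iii)} are handled by results in the paper. Applying Theorem~\ref{thm:random-subcomplexes-of-arbitrary-sequences} to the sequence $n\mapsto\Delta_{k,n}$, which consists of finite, uniformly thick, $(k-1)$-dimensional Moufang buildings with $\thickness(\Delta_{k,n})=p_n+1\geq n$, shows that all but an exponentially small proportion of induced subcomplexes are unions of apartments that are chamber complexes; such a subcomplex contains a chamber of $\Delta_{k,n}$, hence has dimension exactly $k-1$, giving \emph{(i)} outside an exponentially small family. Moreover the proof of Theorem~\ref{thm:non-vanishing-homology} shows that, outside an exponentially small family, $X$ contains a full apartment $\Sigma$ of $\Delta_{k,n}$; since $\Sigma$ is a triangulated $(k-1)$-sphere and $X$ is $(k-1)$-dimensional, $\widetilde{H}_{k-1}(X)\neq 0$, giving \emph{(iii)}. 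Thus, up to an exponentially small error, the conjecture reduces to \emph{(ii)}, i.e.\ to the type $A_k$ case of Conjecture~\ref{conj:vanishing-homology-general}.

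The hard part is therefore upgrading Theorem~\ref{thm:good-homology} from $\floor{\frac{k-1}{2}}$- to $(k-2)$-connectedness, and this is where I expect the real obstacle to lie. The natural strategy is to rerun the proof of Theorem~\ref{thm:good-homology} with the target connectivity raised: discard $\mathcal{L}_{k,n}$ and a suitable analog of the exceptional family $\mathcal{G}_{k,n}$, cover a generic $X$ by the stars $X_H$ of its hyperplanes via Lemma~\ref{lem:covering-with-hyperplanes}, and invoke the nerve lemma (Lemma~\ref{lem:nerve}). As in Theorem~\ref{thm:good-homology}, the nerve $\mathcal{N}_X$ is $(k-2)$-connected for generic $X$, so one would be done provided every non-empty $m$-fold intersection $X_{\mathcal{H}}$ were $(k-1-m)$-connected for each $m\geq 1$. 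For $m=1$ this is clear, since $X_H=\lk_X(H)\ast\{H\}$ is contractible. For $m\geq 2$, Lemma~\ref{lem:intersecting-stars} identifies $X_{\mathcal{H}}$ with an induced subcomplex of $(\Delta_{k,n})_{\mathcal{H}}\cong\Delta_{l,n}\ast\pt$ for some $l\geq k-m$; if it contains the apex vertex it is contractible, and otherwise it is an induced subcomplex of $\Delta_{l,n}$, for which induction on $k$ yields, generically, only $(l-2)$-connectedness, i.e.\ $(k-m-2)$-connectedness. This falls one short of the needed $(k-1-m)$. In Theorem~\ref{thm:good-homology} the ``$\floor{\tfrac{\cdot}{2}}$'' absorbed exactly this loss, because the required connectivity $\floor{\tfrac{k-1}{2}}-m+1$ dropped twice as fast in $m$ as the available connectivity $\floor{\tfrac{l-1}{2}}$; here both drop at the same rate and the deficit persists.

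Closing this one-step gap is, I believe, the crux of the conjecture and will require genuinely new input. One promising avenue is to show that the relative links $X_{\mathcal{H}}$ of a generic $X$ are not arbitrary induced subcomplexes of $\Delta_{l,n}\ast\pt$: since $X$ is itself a union of apartments of $\Delta_{k,n}$, the intersection $X_{\mathcal{H}}=X\cap\bigcap_{H\in\mathcal{H}}\st(H)$ is assembled from stars of simplices inside apartments and should, generically, contain the apex vertex $V_{\mathcal{H}}$ itself, which would make $X_{\mathcal{H}}$ contractible and remove it from the nerve-lemma hypothesis; proving this would need a refinement of Theorem~\ref{thm:random-subcomplexes-of-arbitrary-sequences} controlling links of simplices, obtainable in principle with the magic-cube machinery of Section~\ref{sec:magic-squares} and Corollary~\ref{cor:sequence-disjoint-sets}. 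An alternative is to replace the Mayer--Vietoris bookkeeping of the nerve lemma by a spectral sequence attached to a finer filtration of $X$ (say by flags of subspaces rather than by single hyperplanes), so that the top differentials carrying $\widetilde{H}_{k-2}(X)$ can be analyzed directly. Either route should, once it works in type $A_k$, also yield the general form of Conjecture~\ref{conj:vanishing-homology-general}: dropping the $A_k$ hypothesis amounts to redoing Lemma~\ref{lem:intersecting-stars} for arbitrary spherical types, where links of simplices are joins of buildings of lower rank, and the same inductive scheme applies.
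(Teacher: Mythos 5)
The statement you were asked to prove is labeled a conjecture in the paper and the paper does not prove it, so there is no internal proof to compare against; your response correctly treats it as open rather than manufacturing a spurious argument.

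Your reduction is sound and well-organized. The Hurewicz/minimal-CW-model argument that a $(k-1)$-dimensional, $(k-2)$-connected complex with non-trivial $\widetilde{H}_{k-1}$ is a non-trivial wedge of $(k-1)$-spheres is correct (including the $k=2$ case, where it degenerates to the statement that a connected graph with a cycle is a wedge of circles, and the $k=1$ base case). Your handling of conditions \emph{(i)} and \emph{(iii)} via Theorem~\ref{thm:random-subcomplexes-of-arbitrary-sequences} and Theorem~\ref{thm:non-vanishing-homology} is exactly what those results give, so the conjecture does reduce, up to an exponentially small error, to showing that the proportion of induced subcomplexes of $\Delta_{k,n}$ that fail to be $(k-2)$-connected tends to $0$, i.e.\ to the type $A_k$ case of Conjecture~\ref{conj:vanishing-homology-general}.

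Your diagnosis of the obstruction is also accurate and matches the structure of the paper's own argument: in Theorem~\ref{thm:good-homology} the nerve-lemma induction works precisely because the required connectivity $\floor{\frac{k-1}{2}}-m+1$ of an $m$-fold intersection $X_{\mathcal{H}}$ drops twice as fast in $m$ as the available connectivity $\floor{\frac{l-1}{2}}$ of $\Delta_{l,n}$ with $l \geq k-m$, and when you aim for $(k-2)$-connectedness both quantities drop at the same rate, leaving a persistent deficit of one. Your proposed remedies --- showing that generic intersections $X_{\mathcal{H}}$ contain the apex $V_{\mathcal{H}}$ and hence are contractible, which would require a strengthening of Theorem~\ref{thm:random-subcomplexes-of-arbitrary-sequences} controlling links of simplices rather than just stars of chambers; or replacing the nerve cover by a finer flag filtration with a spectral sequence --- are sensible directions, but as you say they are not carried out in the paper, and that is exactly why the statement is posed as a conjecture. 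There is no gap to fault you for here: you correctly identified the precise missing step and did not overclaim.
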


More generally, we expect that Conjecture~\ref{conj:vanishing-homology} holds without any restrictions on the type of the (finite) building:

\begin{conjecture}\label{conj:vanishing-homology-general}
Let $d \in \N$ and let $(\Delta_n)_{n \in \N}$ be a sequence of finite $d$-dimensional buildings whose thickness satisfies $\thickness(\Delta_n) \rightarrow \infty$ as $n \rightarrow \infty$.
Then $\lim \limits_{n \rightarrow \infty} \frac{\abs{\mathcal{S}(\Delta_{n})}}{\abs{\mathcal{P}(\Delta_{n})}} = 1$.
\end{conjecture}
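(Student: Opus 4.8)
The plan is to reduce the conjecture to a statement purely about $(d-1)$-connectivity of random induced subcomplexes, and then to try to prove that statement by the induction-on-rank scheme used in the proof of Theorem~\ref{thm:good-homology}.

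First I would reduce to irreducible buildings. If $\Delta = \Delta' \ast \Delta''$ is a join of buildings, then the induced subcomplex of $\Delta$ on a vertex set $V' \sqcup V''$ is $(\Delta'|_{V'}) \ast (\Delta''|_{V''})$, so $\mathcal{P}(\Delta)$ is naturally $\mathcal{P}(\Delta') \times \mathcal{P}(\Delta'')$; since a join of spherical complexes of full dimension is again spherical of full dimension, $X' \ast X'' \in \mathcal{S}(\Delta)$ whenever $X' \in \mathcal{S}(\Delta')$ and $X'' \in \mathcal{S}(\Delta'')$, whence $\frac{\abs{\mathcal{S}(\Delta)}}{\abs{\mathcal{P}(\Delta)}} \geq \frac{\abs{\mathcal{S}(\Delta')}}{\abs{\mathcal{P}(\Delta')}} \cdot \frac{\abs{\mathcal{S}(\Delta'')}}{\abs{\mathcal{P}(\Delta'')}}$. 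As the thickness of a join is the minimum of the thicknesses of its factors, the conjecture for $\Delta_n$ follows from the conjecture for each of its (finitely many) irreducible factors, so we may assume $\Delta_n$ irreducible. By the Feit--Higman theorem (already used for Lemma~\ref{lem:polynomial-bounded}), for fixed $d$ only finitely many Coxeter types of finite thick irreducible $d$-dimensional buildings occur, so we may also fix the type. The cases $d \leq 1$ are trivial or follow from a routine argument using the expansion of the incidence graphs of finite thick generalized polygons. For $d \geq 2$ the rank is at least $3$, so by Tits's theorem the $\Delta_n$ are Moufang, defined over finite fields $\mathbb{F}_{q_n}$ with $q_n \to \infty$; modulo the technical point of uniform thickness --- automatic in types $A_n$, $D_n$, $E_n$, and otherwise handled by a routine extension of the magic-cube estimates of Section~\ref{sec:magic-squares} to the two-parameter case --- Theorem~\ref{thm:random-subcomplexes-of-arbitrary-sequences} then applies (its bounds are really functions of $\thickness(\Delta_n)$, so $\thickness(\Delta_n) \to \infty$ is enough to push the bad proportion to $0$). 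It gives that all but a vanishing proportion of induced $X \leq \Delta_n$ are $d$-dimensional chamber complexes containing an apartment, hence, as in Theorem~\ref{thm:non-vanishing-homology}, satisfy $\widetilde{H}_d(X) \neq 0$. Since a $d$-dimensional complex that is $(d-1)$-connected with $\widetilde{H}_d \neq 0$ is $d$-spherical (cf.~\cite[9.19]{Bjoerner95}), it remains to prove that almost every induced subcomplex of $\Delta_n$ is $(d-1)$-connected.

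For this I would imitate the proof of Theorem~\ref{thm:good-homology}, running an induction on $d$. Fix a distinguished ``co-atom'' type $s \in S$, discard (as in Lemma~\ref{lem:covering-with-hyperplanes}) the vanishingly small family of $X$ not covered by the relative stars $X_v = X \cap \st_{\Delta}(v)$ over the type-$s$ vertices $v$ of $X$, and apply the nerve lemma (Lemma~\ref{lem:nerve}) to this covering. The structural input needed is the analog of Lemma~\ref{lem:intersecting-stars}: for a set $\mathcal{H}$ of $m$ type-$s$ vertices, $\bigcap_{v \in \mathcal{H}} \st_{\Delta}(v)$ should be a residue of $\Delta$ joined with a fixed simplex, of dimension at least $d-m+1$. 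Granting this, the inductive hypothesis in the appropriate dimension $e$ with $d-m+1 \leq e \leq d-1$ (together with the join-with-a-simplex reduction used in Theorem~\ref{thm:good-homology}) bounds the proportion of $X$ for which $X_{\mathcal{H}}$ fails to be $(d-m)$-connected, and summing over the polynomially many relevant $\mathcal{H}$ (Lemma~\ref{lem:polynomial-bounded}) shows that, for all but a vanishing proportion of $X$, every nonempty $X_{\mathcal{H}}$ with $1 \leq \abs{\mathcal{H}} \leq d+1$ is $(d-\abs{\mathcal{H}})$-connected and in fact nonempty; then the nerve $\mathcal{N}_X$ contains the full $d$-skeleton of a simplex on at least $d+1$ vertices, so it is $(d-1)$-connected, and the nerve lemma yields that $X$ is $(d-1)$-connected.

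The hard part --- and the reason this is only a conjecture --- is that the nerve-lemma induction does not reach the sharp connectivity bound $d-1$. Using the nerve lemma with $m$-fold intersections costs $m-1$ in connectivity, and passing to a residue of $m$ co-atoms lowers the dimension by only $m-1$, so the two effects exactly cancel: this is harmless in the non-sharp regime of Theorem~\ref{thm:good-homology}, where the target $\floor{\frac{k-1}{2}}$ has ample slack, but at the sharp level the inductive hypothesis has no slack at all. Already for $m=2$ the nerve-lemma hypothesis would require $X_{\mathcal{H}}$ to be $(d-2)$-connected, whereas $X_{\mathcal{H}}$ is (after removing a cone vertex, which happens for a constant fraction of the $X$) an induced subcomplex of a $(d-2)$-dimensional building, and such a subcomplex is essentially never $(d-2)$-connected --- it typically contains an apartment, a $(d-2)$-sphere in top degree, so $\widetilde{H}_{d-2} \neq 0$. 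Thus the general conjecture effectively subsumes the sharp type-$A$ statement, Conjecture~\ref{conj:vanishing-homology}, which is itself open, and a genuinely new ingredient is needed. A plausible route is a Garland-type spectral argument, exploiting that finite buildings of large thickness are high-dimensional expanders, combined with a threshold theorem to the effect that a uniformly random induced subcomplex of a sufficiently good $d$-dimensional expander is $(d-1)$-connected with probability tending to $1$; one would then only have to feed the resulting sharp bound for type $A$ (and for the remaining finitely many types) back through the reductions of the first step. I expect establishing such a threshold theorem --- equivalently, proving Conjecture~\ref{conj:vanishing-homology} --- to be the main obstacle.
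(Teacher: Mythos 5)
This statement is a \emph{conjecture} in the paper, so there is no proof to compare against, and the authors offer no argument beyond noting (in the introduction) that it would amount to improving ``$\floor{\frac{k-1}{2}}$-connected'' in Theorem~\ref{thm:good-homology} to ``$(k-2)$-connected'' and dropping the restriction on type. Your proposal, to its credit, does not claim to close the conjecture and instead carries out exactly that reduction and then pinpoints the obstruction. The reduction itself is sound: the join decomposition $\mathcal{P}(\Delta'\ast\Delta'')\cong\mathcal{P}(\Delta')\times\mathcal{P}(\Delta'')$, the multiplicativity of $\mathcal{S}$ on joins, and the identity $\thickness(\Delta'\ast\Delta'')=\min(\thickness(\Delta'),\thickness(\Delta''))$ do reduce to irreducible factors of each of the finitely many types permitted by Feit--Higman; Theorem~\ref{thm:random-subcomplexes-of-arbitrary-sequences} plus Theorem~\ref{thm:non-vanishing-homology} then handle $\widetilde{H}_d\neq 0$; and the cited fact~\cite[9.19]{Bjoerner95} correctly converts ``$(d-1)$-connected, $d$-dimensional, $\widetilde{H}_d\neq 0$'' into ``$d$-spherical.'' Your diagnosis of why the nerve-lemma induction from Theorem~\ref{thm:good-homology} cannot reach the sharp bound is also accurate and concrete: with $m$ co-atoms the residue $(\Delta_{k+1,n})_{\mathcal{H}}\cong\Delta_{l,n}\ast\pt$ has $l\geq k+1-m$, and after dropping the cone vertex the inductive hypothesis only delivers $(l-2)$-connectivity, i.e., $(k-m-1)$-connectivity, while the nerve lemma at the sharp target demands $(k-m)$-connectivity --- one degree short, with no slack because a random subcomplex of $\Delta_{l,n}$ almost always contains an apartment and hence has $\widetilde{H}_{l-1}\neq 0$. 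This is precisely the sense in which the general conjecture subsumes the open sharp type-$A$ Conjecture~\ref{conj:vanishing-homology}.

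Two caveats on the reductions, which you flag but somewhat underestimate. First, Theorem~\ref{thm:random-subcomplexes-of-arbitrary-sequences} genuinely needs uniform thickness: the magic-cube machinery of Section~\ref{sec:magic-squares} uses a single weight $N=\abs{\Ch(\Delta)}/t$ across all panel types, and in types $B_n/C_n$, $F_4$, $G_2$ the two panel-types can have different cardinalities. Extending to a ``two-parameter'' magic cube is plausible but requires reworking Lemma~\ref{lem:zero-block} and Corollary~\ref{cor:zero-block}, not merely applying them. Second, the Moufang property is automatic by Tits only for irreducible thick spherical buildings of rank $\geq 3$; in dimension $1$ (generalized polygons) it is an extra hypothesis, so your ``routine expansion argument'' there is doing real work and would have to be supplied. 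Neither caveat affects your bottom line, which matches the authors' implicit assessment: the inductive method in the paper degrades by a factor of roughly two per dimension and a new ingredient --- perhaps a Garland-type or high-dimensional expansion argument, as you suggest --- is needed to reach $(d-1)$-connectivity.
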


\section{Special case of graphs}\label{sec:graphs}

Recall that $\mathcal{P}(\Gamma)$ denotes the set of induced subgraphs of a graph $\Gamma$.
Also recall that $\mathcal{F}_0(\Gamma)$ denotes the set of induced subgraphs of $\Gamma$ that are not $0$-connected.

As far as we know, the following was unknown before.

\begin{theorem}\label{thm:square-free-graphs}
For every $\varepsilon>0$ there is a finite, bipartite, square-free graph $\Gamma$ such that
\[
\abs{\mathcal{F}_0(\Gamma)} < \varepsilon \abs{\mathcal{P}(\Gamma)}.
\]
\end{theorem}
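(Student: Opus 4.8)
The plan is to take $\Gamma$ to be one of the buildings $\Delta_{2,n}$ of type $A_2$, for $n$ sufficiently large. Recall that $\Delta_{2,n}=A(V_{2,n})$ with $V_{2,n}=\Field_{p_n}^3$ has dimension $2-1=1$, so it is a graph; its vertices are the $1$- and $2$-dimensional subspaces of $\Field_{p_n}^3$, and two vertices span an edge exactly when one is a proper subspace of the other. Thus $\Delta_{2,n}$ is precisely the incidence graph of the projective plane $PG(2,p_n)$.

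First I would verify that $\Delta_{2,n}$ is finite, bipartite, and square-free. Finiteness is clear since $\Field_{p_n}$ is finite. For bipartiteness, take as the two colour classes the set of $1$-dimensional subspaces and the set of $2$-dimensional subspaces; since an edge joins a subspace to a properly larger subspace and both have dimension in $\{1,2\}$, every edge joins a $1$-dimensional subspace to a $2$-dimensional one. For square-freeness, an induced square would consist of distinct $1$-dimensional subspaces $U_1,U_2$ and distinct $2$-dimensional subspaces $W_1,W_2$ with $U_i\subset W_j$ for all $i,j$; but then $W_1$ and $W_2$ both equal $U_1+U_2$, the unique $2$-dimensional subspace containing $U_1$ and $U_2$, a contradiction. (Equivalently, the incidence graph of a projective plane has girth $6$.)

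Next I would invoke Theorem~\ref{thm:good-homology} with $k=2$. Since $\floor{\tfrac{k-1}{2}}=\floor{\tfrac{1}{2}}=0$, the theorem tells us that
\[
\varphi_2(n)=\frac{\abs{\mathcal{F}_0(\Delta_{2,n})}}{\abs{\mathcal{P}(\Delta_{2,n})}}
\]
is exponentially decreasing in $n$; in particular $\varphi_2(n)\to 0$ as $n\to\infty$. Hence, given $\varepsilon>0$, I would pick $n$ large enough that $\varphi_2(n)<\varepsilon$ and set $\Gamma\defeq\Delta_{2,n}$. By the previous paragraph $\Gamma$ is finite, bipartite, and square-free, and by construction $\abs{\mathcal{F}_0(\Gamma)}<\varepsilon\,\abs{\mathcal{P}(\Gamma)}$, which is exactly the assertion.

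There is essentially no obstacle here beyond Theorem~\ref{thm:good-homology} itself, which does all the work; note that its hypothesis that the thickness $\thickness(\Delta_{2,n})=p_n+1$ tends to $\infty$ is automatic for this family. The only genuinely new ingredient is the elementary observation, made above, that the buildings $\Delta_{2,n}$ are bipartite and square-free — so that the highly-connected-random-subcomplex statement of Theorem~\ref{thm:good-homology} can be read as a statement about induced subgraphs of a finite bipartite square-free graph. Everything else is bookkeeping with the limit.
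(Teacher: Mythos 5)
Your proof is correct and follows essentially the same route as the paper: take $\Gamma=\Delta_{2,n}=A(\Field_{p_n}^3)$, observe that the dimension of a vertex gives a bipartition and that two distinct points (resp. lines) determine a unique line (resp. point) so there is no induced square, then apply Theorem~\ref{thm:good-homology} with $k=2$. The paper phrases square-freeness via $\dim(W_1\cap W_2)=1$ while you phrase it via $W_1=W_2=U_1+U_2$, but these are the same observation.
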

\begin{proof}
Let $(p_n)_{n \in \N}$ be a sequence of increasing primes and let $\Gamma_n \defeq A_2(\Field_{p_n})$.
Recall that the vertices of $\Gamma_n$ are given by the non-trivial proper subspaces of $\Field_{p_n}^3$.
We can therefore partition the vertices into those that have dimension $1$ and those that have dimension $2$.
As there is no proper inclusion between vertices of the same dimension, we see that this partition provides a bipartite structure on $\Gamma_n$.
Next we suppose for a contradiction that $\Gamma_n$ contains a square.
Then we can find $4$ vertices $V_1,V_2,W_1,W_2 \in \Gamma_n^{(0)}$ such that $V_i$ is connected to $W_j$ for $i,j \in \{1,2\}$.
Without loss of generality we may assume that $\dim(V_i) = 1$ and $\dim(W_i) = 2$ for $i \in \{1,2\}$.
Note that in this case we have $V_1,V_2 \leq W_1 \cap W_2$, which is a contradiction since $V_1,V_2$ are distinct but $\dim(W_1 \cap W_2) = 1$.
It remains to prove the claim about the disconnected subgraphs.
As $\thickness(\Gamma_n) = p_n + 1 \rightarrow \infty$ for $n \rightarrow \infty$, we can apply Theorem~\ref{thm:good-homology} with $k=2$ to deduce that there is some $n \in \N$ with $\abs{\mathcal{F}_0(\Gamma_n)} < \varepsilon \abs{\mathcal{P}(\Gamma_n)}$.
\end{proof}

\section{Applications}\label{sec:apps}

In this section we combine the tools from Section~\ref{sec:vaf_RACG} with all the building theoretic results in the intervening sections to produce examples of RACGs with interesting virtual algebraic fibering properties.
As before, fix a sequence $(p_n)_{n \in \N}$ of ascending primes, let $V_{k,n} \defeq \Field_{p_n}^{k+1}$, and consider the corresponding buildings $\Delta_{k,n} \defeq A(V_{k,n})$ of type $A_k$.

\begin{theorem}\label{thrm:typeA_higher_fibering}
Let $k\in\N_{\ge 2}$. For all but finitely many $n$, the group $W_{\Delta_{k,n}}'$ algebraically $\F_{\floor{\frac{k+1}{2}}}$-fibers.
\end{theorem}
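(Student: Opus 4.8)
The plan is to deduce this directly from Theorem~\ref{thm:good-homology} and Corollary~\ref{cor:almost_all_fiber}; essentially all the real work is already contained in Theorem~\ref{thm:good-homology}, and what remains is a bookkeeping check on indices and chromatic numbers. Set $m \defeq \floor{\frac{k+1}{2}}$. The first step is to observe that $m-1 = \floor{\frac{k-1}{2}}$: checking the cases $k=2j$ and $k=2j+1$ separately gives $m-1 = j-1 = \floor{\frac{k-1}{2}}$ in the first case and $m-1 = j = \floor{\frac{k-1}{2}}$ in the second. Hence $\mathcal{F}_{m-1}(\Delta_{k,n})$ is precisely the set $\mathcal{F}_{\floor{\frac{k-1}{2}}}(\Delta_{k,n})$ appearing in Theorem~\ref{thm:good-homology}.

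Next I would translate the conclusion of Theorem~\ref{thm:good-homology} into the hypothesis of Corollary~\ref{cor:almost_all_fiber}. Since induced subcomplexes of $\Delta_{k,n}$ correspond bijectively to subsets of $\Delta_{k,n}^{(0)}$, we have $\abs{\mathcal{P}(\Delta_{k,n})} = 2^{\abs{\Delta_{k,n}^{(0)}}}$, so the function
\[
n \mapsto \frac{\abs{\mathcal{F}_{m-1}(\Delta_{k,n})}}{2^{\abs{\Delta_{k,n}^{(0)}}}} = \varphi_k(n)
\]
is exponentially decreasing by Theorem~\ref{thm:good-homology}, and in particular tends to $0$ as $n \to \infty$.

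Then I would check that the family $(\Delta_{k,n})_{n\in\N}$ has a common chromatic number, namely $k$. The vertices of $\Delta_{k,n} = A(V_{k,n})$ are the nontrivial proper subspaces of $V_{k,n} = \Field_{p_n}^{k+1}$, and two distinct vertices are joined by an edge only when one properly contains the other, hence only when they have distinct dimensions. Therefore coloring each vertex by its dimension, a value in $\{1,\dots,k\}$, is a proper $k$-coloring; and no proper coloring with fewer colors exists because $\Delta_{k,n}$ contains a chamber, i.e.\ a $(k-1)$-simplex on $k$ pairwise-adjacent vertices. Thus $\chi(\Delta_{k,n}) = k$ for every $n$.

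Finally, applying Corollary~\ref{cor:almost_all_fiber} to the family $(\Delta_{k,n})_{n}$ with common chromatic number $\chi = k$ and with the value $m = \floor{\frac{k+1}{2}}$ shows that for all but finitely many $n$, $\Delta_{k,n}$ admits an $(m-1)$-legal system of moves, whence $W_{\Delta_{k,n}}'$ algebraically $\F_{\floor{\frac{k+1}{2}}}$-fibers by Proposition~\ref{prop:JNW_game}. I do not expect any genuine obstacle in this argument — the only points requiring care are the index identity $m-1 = \floor{\frac{k-1}{2}}$ and the observation that the chromatic number of $\Delta_{k,n}$ does not depend on $n$, both of which are elementary.
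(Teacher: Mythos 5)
Your proof is correct and follows exactly the same route as the paper's: identify $\chi(\Delta_{k,n})=k$ (via the dimension coloring, with chambers forcing minimality), translate Theorem~\ref{thm:good-homology} into the hypothesis of Corollary~\ref{cor:almost_all_fiber} using the index identity $\floor{\frac{k+1}{2}}-1 = \floor{\frac{k-1}{2}}$, and invoke that corollary. You merely spell out the arithmetic and the chromatic-number lower bound that the paper leaves implicit.
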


\begin{proof}
The chromatic number of $\Delta_{k,n}$ is $k$ for all $n$, since the vertices are the proper non-trivial subspaces of $\Field_{p_n}^{k+1}$, and so ``dimension'' is a minimal coloring by $k$ colors. Hence the result is immediate from Corollary~\ref{cor:almost_all_fiber} and Theorem~\ref{thm:good-homology}.
\end{proof}

Following Conjecture~\ref{conj:vanishing-homology}, we expect that Theorem~\ref{thrm:typeA_higher_fibering} can be improved to say that $W_{\Delta_{k,n}}'$ algebraically $\F_{k-1}$-fibers.
More generally, following Conjecture~\ref{conj:vanishing-homology-general}, we expect for any sufficiently thick, $d$-dimensional finite building $\Delta$ that $W_\Delta$ algebraically $\F_d$-fibers.

We can also deduce some results involving negative finiteness properties.

\begin{theorem}\label{thrm:low_dim_sharp_fiber}
Let $k\in\N$ with $k\le 3$.
For all but finitely many $n$, the group $W_{\Delta_{k,n}}'$ admits a map to $\Z$ whose kernel is of type $\F_{k-1}$ but not of type $\FP_k$.
\end{theorem}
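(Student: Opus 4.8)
The plan is to obtain this as a direct application of Corollary~\ref{cor:almost_all_sharply_fiber} to the family $(\Delta_{k,n})_{n\in\N}$, for our fixed $k\le 3$. This family has constant dimension $d\defeq k-1$, and, exactly as in the proof of Theorem~\ref{thrm:typeA_higher_fibering}, constant chromatic number $\chi=k$ (subspace dimension is a proper $k$-coloring of $\Delta_{k,n}$, and it is minimal). Thus the hypotheses of Corollary~\ref{cor:almost_all_sharply_fiber} concerning $\chi$ and $d$ hold, and the whole task reduces to checking that
\[
\frac{\abs{\mathcal{F}_{d-1}(\Delta_{k,n})}+\abs{\mathcal{T}_{d}(\Delta_{k,n})}}{\abs{\mathcal{P}(\Delta_{k,n})}}\to 0 \text{ as } n\to\infty.
\]
Once this is verified, Corollary~\ref{cor:almost_all_sharply_fiber} yields, for all but finitely many $n$, a sharply $(d-1)$-legal system of moves for $\Delta_{k,n}$, and hence a map $W_{\Delta_{k,n}}'\to\Z$ whose kernel is of type $\F_d=\F_{k-1}$ but not of type $\FP_{d+1}=\FP_k$, which is precisely the claim. (Recall from the proof of Lemma~\ref{lem:force_sharply_legal_system} that the top-dimensional part of the sharp-legality requirement is automatic since $\Delta_{k,n}$ is $d$-dimensional, so only the $\mathcal{F}_{d-1}$ and $\mathcal{T}_d$ terms matter.)

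For the term involving $\mathcal{T}_d$ there is nothing to do: $\abs{\mathcal{T}_{d}(\Delta_{k,n})}/\abs{\mathcal{P}(\Delta_{k,n})}=\abs{\mathcal{T}_{k-1}(\Delta_{k,n})}/\abs{\mathcal{P}(\Delta_{k,n})}\to 0$ is exactly Theorem~\ref{thm:non-vanishing-homology}. For the term involving $\mathcal{F}_{d-1}$, the key observation is the numerology $d-1=k-2\le\floor{\frac{k-1}{2}}$, which is valid precisely when $k\le 3$: for $k=2$ both sides equal $0$, for $k=3$ both sides equal $1$, and for $k=1$ we may simply note that $\mathcal{F}_{-1}(\Delta_{1,n})$ consists only of the empty subcomplex. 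Hence for $k\in\{2,3\}$ we have $\mathcal{F}_{k-2}(\Delta_{k,n})\subseteq\mathcal{F}_{\floor{(k-1)/2}}(\Delta_{k,n})$, so Theorem~\ref{thm:good-homology} shows that $\abs{\mathcal{F}_{d-1}(\Delta_{k,n})}/\abs{\mathcal{P}(\Delta_{k,n})}$ is even exponentially decreasing, while for $k=1$ it equals $2^{-\abs{\Delta_{1,n}^{(0)}}}$. In all cases $k\le 3$ this term tends to $0$, the displayed quotient tends to $0$, and the proof is complete.

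There is essentially no hard step here; everything of substance is imported from Theorems~\ref{thm:good-homology} and~\ref{thm:non-vanishing-homology} and Corollary~\ref{cor:almost_all_sharply_fiber}. The one thing to watch is the inequality $k-2\le\floor{\frac{k-1}{2}}$, which is exactly what fails for $k\ge 4$ and explains the hypothesis $k\le 3$. To push the statement to larger $k$ one would need the analogue of Theorem~\ref{thm:good-homology} giving $\abs{\mathcal{F}_{k-2}(\Delta_{k,n})}/\abs{\mathcal{P}(\Delta_{k,n})}\to 0$, i.e.\ the sharper conclusion of Conjecture~\ref{conj:vanishing-homology} (the $\mathcal{T}$ term is already handled in arbitrary dimension by Theorem~\ref{thm:non-vanishing-homology}).
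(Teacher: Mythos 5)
Your argument for $k\in\{2,3\}$ is exactly the paper's: identify $\chi(\Delta_{k,n})=k$ and $\dim\Delta_{k,n}=k-1$, feed Theorem~\ref{thm:good-homology} (using the numerical coincidence $k-2=\floor{\frac{k-1}{2}}$ for $k\le 3$) and Theorem~\ref{thm:non-vanishing-homology} into Corollary~\ref{cor:almost_all_sharply_fiber}, and conclude. The only divergence is at $k=1$: the paper dispatches that case with a one-line elementary observation ($\Delta_{1,n}$ is a discrete set of $\ge 3$ points, so $W_{\Delta_{1,n}}'$ is a non-abelian free group, for which the conclusion is classical), whereas you fold $k=1$ into the same Corollary~\ref{cor:almost_all_sharply_fiber} framework. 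Your route does work --- the chromatic number is $1$, $\mathcal{F}_{-1}$ is a singleton, and $\mathcal{T}_0(\Delta_{1,n})$ is small --- but note that your appeal to Theorem~\ref{thm:non-vanishing-homology} at $k=1$ leans on machinery (Theorems~\ref{thm:random-subcomplexes-of-arbitrary-sequences} and~\ref{thm:disjoint-unions-of-apartments}) formulated for Moufang buildings, a notion that is at best vacuous in dimension $0$; one can sidestep this entirely by counting directly that $\abs{\mathcal{T}_0(\Delta_{1,n})}=p_n+2$. The paper's separate treatment of $k=1$ is cleaner for exactly this reason, though your uniform treatment has the virtue of not special-casing.
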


\begin{proof}
If $k=1$ then $\Delta_{k,n}$ is a discrete set of at least $3$ vertices, so $W_{\Delta_{1,n}}'$ is a non-abelian free group, and the result holds.
Now assume $k\in\{2,3\}$.
The dimension of $\Delta_{k,n}$ is $k-1$ and the chromatic number is $k$, so by Corollary~\ref{cor:almost_all_sharply_fiber} it suffices to show that the functions
\[
n \mapsto \frac{\abs{\mathcal{F}_{k-2}(\Delta_{k,n})}}{\abs{\mathcal{P}(\Delta_{k,n})}} \text{ and } n \mapsto \frac{\abs{\mathcal{T}_{k-1}(\Delta_{k,n})}}{\abs{\mathcal{P}(\Delta_{k,n})}}
\]
both go to $0$ as $n$ goes to $\infty$.
Note that $k-2 = \floor{\frac{k-1}{2}}$ for $k=2$ and $k=3$.
Thus $\frac{\abs{\mathcal{F}_{k-2}(\Delta_{k,n})}}{\abs{\mathcal{P}(\Delta_{k,n})}}$ is exponentially decreasing in $n$ by Theorem~\ref{thm:good-homology}.
This handles the first function.
For the second function (for either $k=2$ or $3$), by Theorem~\ref{thm:non-vanishing-homology} we have that $\frac{\abs{\mathcal{T}_{k-1}(\Delta_{k,n})}}{\abs{\mathcal{P}(\Delta_{k,n})}}$ goes to $0$ with $n$, so we are done.
\end{proof}

Note that, following Conjecture~\ref{conj:vanishing-homology-general} and using Lemma~\ref{lem:force_sharply_legal_system}, we expect the following should be true:

\begin{conjecture}\label{conj:buildings_sharp_fibering}
Let $d \in \N$ and let $(\Delta_n)_{n \in \N}$ be a sequence of finite $d$-dimensional buildings whose thickness satisfies $\thickness(\Delta_n) \rightarrow \infty$ as $n \rightarrow \infty$.
Then for all but finitely many $n$, there is a kernel of a map $W_{\Delta_n}'\to\Z$ that is of type $\F_d$ but not $\FP_{d+1}$.
\end{conjecture}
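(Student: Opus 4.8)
The plan is to run the argument of Theorem~\ref{thrm:low_dim_sharp_fiber} essentially verbatim, with Conjecture~\ref{conj:vanishing-homology-general} playing the role that Theorem~\ref{thm:good-homology} and Theorem~\ref{thm:non-vanishing-homology} play there; so the proposed proof is conditional, and all of the real content sits in that conjecture. Two preliminary observations make the reduction work. First, a finite thick $d$-dimensional building $\Delta$ has chromatic number exactly $d+1$: its type function is a proper coloring with $d+1$ colors, while every chamber spans a $(d+1)$-clique; since $\thickness(\Delta_n)\to\infty$, all but finitely many $\Delta_n$ are thick, so $\chi(\Delta_n)=d+1$ for large $n$, which is all the chromatic hypothesis of Corollary~\ref{cor:almost_all_sharply_fiber} needs. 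Second, if an induced subcomplex $X\le\Delta_n$ is $d$-spherical in the sense of Definition~\ref{def:spherical}, then $X$ is $(d-1)$-connected and has $\widetilde{H}_d(X)\ne 0$; equivalently, $\mathcal{F}_{d-1}(\Delta_n)\cup\mathcal{T}_d(\Delta_n)\subseteq\mathcal{P}(\Delta_n)\setminus\mathcal{S}(\Delta_n)$.

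Granting Conjecture~\ref{conj:vanishing-homology-general}, the ratio $\abs{\mathcal{P}(\Delta_n)\setminus\mathcal{S}(\Delta_n)}/\abs{\mathcal{P}(\Delta_n)}$ tends to $0$, so by the second observation $\big(\abs{\mathcal{F}_{d-1}(\Delta_n)}+\abs{\mathcal{T}_d(\Delta_n)}\big)/\abs{\mathcal{P}(\Delta_n)}$ also tends to $0$. Together with the first observation (constant chromatic number $d+1$ and constant dimension $d$ along the tail of the sequence), this is precisely the hypothesis of Corollary~\ref{cor:almost_all_sharply_fiber}, so for all but finitely many $n$ the complex $\Delta_n$ admits a sharply $(d-1)$-legal system of moves, and then Lemma~\ref{lem:JNW_game_with_negative} produces a map $W_{\Delta_n}'\to\Z$ whose kernel is of type $\F_d$ but not of type $\FP_{d+1}$, as required. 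When $d=0$ this is unconditional: $\Delta_n$ is then a discrete set of cardinality tending to $\infty$, so $W_{\Delta_n}'$ is a nonabelian free group of finite rank and any epimorphism to $\Z$ has infinitely generated kernel; this is already the $k=1$ case of Theorem~\ref{thrm:low_dim_sharp_fiber}.

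The entire difficulty is thus Conjecture~\ref{conj:vanishing-homology-general}: a random induced subcomplex of a sufficiently thick finite building is $d$-spherical. Theorem~\ref{thm:random-subcomplexes-of-arbitrary-sequences} already supplies the structural backbone for uniformly thick Moufang buildings whose thickness grows at least linearly --- random induced subcomplexes are chamber complexes that are unions of apartments and in which every bounded subcomplex sits inside a $d$-spherical union of boundedly many apartments --- but turning this into $(d-1)$-connectivity of the whole complex is carried out in the paper only for type $A$: via the covering of a generic subcomplex by the stars of its hyperplane vertices (Lemma~\ref{lem:covering-with-hyperplanes}), the identification of iterated intersections of such stars with joins of lower-rank $A$-buildings (Lemma~\ref{lem:intersecting-stars}), and the nerve lemma; and even that yields only $\floor{\frac{d}{2}}$-connectivity. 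A full proof would need (a) a type-uniform substitute for the ``the relevant links are again buildings in the same family, one rank lower'' induction, (b) uniform control of the nerve of the resulting covering, and (c) removal of the Moufang and uniform-thickness hypotheses --- which already fail for general finite generalized quadrangles --- presumably by weakening the exact projection-independence results of Section~\ref{sec:magic-squares} to averaged estimates that still feed Corollary~\ref{cor:sequence-disjoint-sets}. I expect step (a), namely isolating for each spherical type a canonical family of vertices whose closed stars cover a generic subcomplex with iterated intersections that are joins of controlled lower-rank buildings, to be the genuinely hard part.
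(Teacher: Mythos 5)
The statement you were asked to prove is a conjecture, and the paper offers no proof of it; it only remarks, immediately before stating it, that it should follow ``following Conjecture~\ref{conj:vanishing-homology-general} and using Lemma~\ref{lem:force_sharply_legal_system}.'' Your proposal carries out exactly this reduction (via Corollary~\ref{cor:almost_all_sharply_fiber}, which is just Lemma~\ref{lem:force_sharply_legal_system} packaged for sequences), so it matches the paper's intent. Your two preliminary observations are correct and fill in details the paper leaves implicit: the type function shows $\chi(\Delta_n)=d+1$ and chambers are $(d+1)$-cliques, so the chromatic number is literally $d+1$ (thickness is not even needed for this, though it is harmless to invoke); and a $d$-spherical complex is $(d-1)$-connected with nonvanishing $\widetilde{H}_d$, giving $\mathcal{F}_{d-1}(\Delta_n)\cup\mathcal{T}_d(\Delta_n)\subseteq\mathcal{P}(\Delta_n)\setminus\mathcal{S}(\Delta_n)$, so Conjecture~\ref{conj:vanishing-homology-general} feeds directly into the hypothesis of Corollary~\ref{cor:almost_all_sharply_fiber}. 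The unconditional $d=0$ case and the closing discussion of what is genuinely open in Conjecture~\ref{conj:vanishing-homology-general} are accurate; as you say, all the real content sits in that conjecture, which the paper does not claim to have proven either.
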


Let us now focus on hyperbolic RACGs, i.e., those $W_L$ for which $L$ is square-free.
For $L$ of the form $\Delta_{k,n}$, this is equivalent to saying that $k\le 2$.
For $k\le 1$, $\Delta_{k,n}$ has no edges, and cannot have exactly two vertices, so $W_{\Delta_{k,n}}$ is virtually free but not isomorphic to $D_\infty$, and so does not virtually algebraically fiber.
Hence we are most interested in $k=2$.
In this case, we have the following:

\begin{theorem}\label{thrm:typeA_graph_fibering}
For all but finitely many $n$, the group $W_{\Delta_{2,n}}'$ is hyperbolic and algebraically fibers, via a map $W_{\Delta_{2,n}}'\to\Z$ whose (finitely generated) kernel is not hyperbolic.
\end{theorem}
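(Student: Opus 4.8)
The plan is to assemble two facts that have already been proved: that $W_{\Delta_{2,n}}$ is hyperbolic, and that (for all but finitely many $n$) it admits a fibering with a finitely generated but non-$\FP_2$ kernel. First I would record that $\Delta_{2,n}$, whose vertices are the $1$- and $2$-dimensional subspaces of $\Field_{p_n}^3$, is square-free: if distinct $1$-spaces $V_1,V_2$ were both adjacent to distinct $2$-spaces $W_1,W_2$, then $V_1,V_2\le W_1\cap W_2$, contradicting $\dim(W_1\cap W_2)=1$. This is exactly the observation used in the proof of Theorem~\ref{thm:square-free-graphs}. Hence by Citation~\ref{cit:hyp_RACG} (Moussong) the group $W_{\Delta_{2,n}}$ is hyperbolic, and since $W_{\Delta_{2,n}}'$ is a finite index subgroup, it is hyperbolic as well; this holds for every $n$.

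Next I would invoke Theorem~\ref{thrm:low_dim_sharp_fiber} in the case $k=2$: for all but finitely many $n$ there is an epimorphism $\psi\colon W_{\Delta_{2,n}}'\to\Z$ whose kernel is of type $\F_1$ but not of type $\FP_2$. Being of type $\F_1$ means $\ker\psi$ is finitely generated, so $\psi$ exhibits $W_{\Delta_{2,n}}'$ as algebraically fibering. On the other hand, every hyperbolic group is of type $\F_\infty$, in particular of type $\FP_2$; since $\ker\psi$ is not of type $\FP_2$, it cannot be hyperbolic. Combining this with the previous paragraph gives the theorem.

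Essentially all of the content here has already been established: square-freeness is elementary projective geometry, hyperbolicity is Moussong's theorem, and the existence of a finitely generated non-$\FP_2$ kernel is Theorem~\ref{thrm:low_dim_sharp_fiber} (which itself rests on Theorem~\ref{thm:good-homology} and Theorem~\ref{thm:non-vanishing-homology}), so there is no genuine obstacle remaining — the proof is just a matter of putting the pieces together. The only point to verify is that a single $n$ can be made to work for both conclusions, and this is automatic: hyperbolicity holds for all $n$, while the fibering statement already carries its own ``all but finitely many $n$'' quantifier, so the two ``for all but finitely many $n$'' conditions are compatible.
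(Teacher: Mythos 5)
Your proof is correct and follows essentially the same route as the paper: square-freeness of $\Delta_{2,n}$ plus Moussong for hyperbolicity, then a fibering with a finitely generated non-$\FP_2$ kernel, which cannot be hyperbolic. The one small difference is that you obtain both the fibering and the negative finiteness property in one stroke from Theorem~\ref{thrm:low_dim_sharp_fiber} with $k=2$ (type $\F_1$ = finitely generated, not $\FP_2$), whereas the paper invokes Theorem~\ref{thrm:typeA_higher_fibering} for the fibering and then Theorem~\ref{thrm:low_dim_sharp_fiber} separately for the non-$\FP_2$ claim; your version is marginally tighter since it manifestly uses the same map $\psi$ for both conclusions.
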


\begin{proof}
By the proof of Theorem~\ref{thm:square-free-graphs}, the graph $\Delta_{2,n}$ is square-free, so $W_{\Delta_{2,n}}$ and hence $W_{\Delta_{2,n}}'$ is hyperbolic by Citation~\ref{cit:hyp_RACG}.
By Theorem~\ref{thrm:typeA_higher_fibering} (and as mentioned in Example~\ref{ex:bipartite}), $W_{\Delta_{2,n}}'$ algebraically fibers, and by Theorem~\ref{thrm:low_dim_sharp_fiber} the relevant kernels are not of type $\FP_2$, hence not hyperbolic.
\end{proof}

As a remark, we believe that $\Delta_{2,n}$ satisfies the hypotheses of \cite[Lemma~5.1]{JankiewiczNorinWise21} for large enough $n$, which also implies that $W_{\Delta_{2,n}}'$ algebraically fibers, but we are not sure whether the non-hyperbolicity of the relevant kernels also follows from tools in \cite{JankiewiczNorinWise21}.
It is also worth pointing out that since $W_{\Delta_{2,n}}'$ has cohomological dimension $2$ (by virtue of $X_{\Delta_{2,n}}$ having dimension $2$ and $W_{\Delta_{2,n}}'$ not being free), a result of Gersten \cite{Gersten96} says that the relevant kernels failing to be hyperbolic is precisely equivalent to them failing to be of type $\FP_2$.
Hence using Theorem~\ref{thrm:low_dim_sharp_fiber} to get them to be non-hyperbolic by virtue of them being not of type $\FP_2$ was a precise characterization, and not ``overkill''.
As another remark, it is clear that the requirement that the thickness be large enough is necessary, since when $L$ is a thin building of type $A_2$, that is a hexagon, we know that $W_L$ does not virtually algebraically fiber (Lemma~\ref{lem:planar_no_fiber}).

\bibliographystyle{alpha}

\end{document}